\tikzset{negated/.style={
        decoration={markings,
            mark= at position 0.5 with {
                \node[transform shape] (tempnode) {$\backslash$};
            }
        },
        postaction={decorate}
    }
}
\author{Souvik Dey}
\address[SD]{Department of Mathematical Sciences, 850 West Dickson Street, University of Arkansas, Fayetteville, Arkansas 72701}
\email{souvikd@uark.edu}    
\author[Kaito Kimura]{Kaito Kimura}
\address[KK]{Graduate School of Mathematics, Nagoya University, Furocho, Chikusaku, Nagoya 464-8602, Japan}
\email{m21018b@math.nagoya-u.ac.jp}
\author[Jian Liu]{Jian Liu}
\address[JL]{School of Mathematics and Statistics, and Hubei Key Laboratory of Mathematical Sciences,  Central China Normal University,  Wuhan 430079, P.R. China}
\email{jianliu@ccnu.edu.cn}
\author[Yuya Otake]{Yuya Otake}
\address[YO]{Graduate School of Mathematics, Nagoya University, Furocho, Chikusaku, Nagoya 464-8602, Japan}
\email{m21012v@math.nagoya-u.ac.jp}
\keywords{Cohen--Macaulay module, syzygy module, finite representation type, completion, isolated singularity, hypersurface, torsionfree module, Gorenstein projective module, virtually Gorenstein ring}
\subjclass[2020]{13C14, 13C60, 13D09, 16G60, 18G80}
\def \S {\mathsf{S}}
\DeclareMathOperator{\embdim}{embdim}
\DeclareMathOperator{\sg}{sg}
\DeclareMathOperator{\depth}{depth}
\DeclareMathOperator{\h}{H}
\newcommand{\syz}{\Omega}
\newcommand{\Z}{\mathbb{Z}}
\newcommand{\T}{\mathcal{T}}
\newcommand{\Y}{\mathcal Y}
\newcommand{\D}{\mathsf{D}}
\newcommand{\x}{{\bm{x}}}
\newcommand{\m}{\mathfrak{m}}
\newcommand{\p}{\mathfrak{p}}
\DeclareMathOperator{\Ima}{Im}
\DeclareMathOperator{\Ker}{Ker}
\DeclareMathOperator{\pd}{pd}
\DeclareMathOperator{\add}{add}
\DeclareMathOperator{\ca}{ca}
\DeclareMathOperator{\Spec}{Spec}
\DeclareMathOperator{\Sing}{Sing}
\DeclareMathOperator{\thick}{thick}
\DeclareMathOperator{\GProj}{\mathsf{GProj}}
\DeclareMathOperator{\GInj}{\mathsf{GInj}}
\DeclareMathOperator{\Gproj}{\mathsf{Gproj}}
\DeclareMathOperator{\Proj}{\mathsf{Proj}}
\DeclareMathOperator{\Inj}{Inj}
\DeclareMathOperator{\proj}{\mathsf{proj}}
\DeclareMathOperator{\FPD}{FPD}
\DeclareMathOperator{\fl}{\mathsf{fl}}
\def\add{\operatorname{add}}
\def\ann{\operatorname{ann}}
\def\ca{\mathsf{ca}}
\def\CM{\mathsf{CM}}
\def\OCM{\Omega\mathsf{CM}}
\def\cok{\operatorname{Coker}}
\def\depth{\operatorname{depth}}
\def\Ext{\operatorname{Ext}}
\def\h{\operatorname{H}}
\def\Hom{\operatorname{Hom}}
\def\m{\mathfrak{m}}
\def\mod{\operatorname{mod}}
\def\Mod{\operatorname{Mod}}
\def\p{\mathfrak{p}}
\def\res{\operatorname{res}}
\def\spec{\operatorname{Spec}}
\def\Tor{\operatorname{Tor}}
\def\Tr{\operatorname{Tr}}
\def\Tr{\operatorname{Tr}}
\def\V{\operatorname{V}}
\def\X{\mathcal{X}}
\def\TF{\mathsf{TF}} 
\def\NF{\operatorname{NF}} 
\def\Supp{\operatorname{Supp}}
\newtheorem{theorem}{Theorem}[section]
\newtheorem{prop}[theorem]{Proposition}
\newtheorem{thm}[theorem]{Theorem}
\newtheorem{lem}[theorem]{Lemma}
\newtheorem{cor}[theorem]{Corollary}
\theoremstyle{definition}
\newtheorem{ex}[theorem]{Example}
\newtheorem{rem}[theorem]{Remark}
\newtheorem{chunk}[theorem]{}
\newtheorem*{ack}{Acknowledgements}
\title[On local rings of finite syzygy representation type]{On local rings of finite syzygy representation type}
\begin{document}
\maketitle
\begin{abstract}
Let $R$ be a commutative Noetherian local ring. We characterize when its completion has an isolated singularity, thereby strengthening the Dao–Takahashi refinement of the Auslander–Huneke–Leuschke–Wiegand theorem.  We investigate the ascent and descent of finite and countable syzygy representation type along the canonical map from $R$ to its completion. One consequence is a complete affirmative answer to Schreyer’s conjecture. We explore analogues of Chen’s questions in the context of finite Cohen--Macaulay representation type over Cohen--Macaulay rings. The main result in this direction shows that if $R$ is Cohen--Macaulay and there are only finitely many non-isomorphic indecomposable maximal Cohen--Macaulay modules that are locally free on the punctured spectrum, then either $R$ is a hypersurface or every Gorenstein projective module is projective; moreover, every Gorenstein projective module over the completion of $R$ is a direct sum of finite generated ones. Finally, we study dominant local rings, introduced by Takahashi, under certain finite representation type conditions, and identify a new class of virtually Gorenstein rings.
\end{abstract}

\section{Introduction}
Cohen--Macaulay representation theory, which studies maximal Cohen--Macaulay modules over Cohen--Macaulay rings, began in the 1970s.  A fundamental problem is to determine when a ring has \emph{finite Cohen--Macaulay representation type}, that is, when there exist only finitely many non-isomorphic indecomposable maximal Cohen--Macaulay modules. 

For a Cohen--Macaulay local ring $(R,\m)$,
Auslander \cite{Auslander:1986} established that if $R$ is complete and of finite Cohen--Macaulay representation type, then it has an isolated singularity. Leuschke and Wiegand \cite{LW2000} extended this result to excellent local rings. Finally, Huneke and Leuschke \cite{HL} removed the assumption of excellence, showing that the conclusion holds for all Cohen--Macaulay local rings. Herzog \cite{Herzog} observed that a complete Gorenstein local ring with finite Cohen--Macaulay representation type must be a hypersurface. 
In the 1980s, under mild assumptions,  complete Gorenstein local rings having finite Cohen--Macaulay representation type were classified as simple hypersurface singularities by Buchweitz, Greuel, and Schreyer \cite{BGS},  and K\"{n}orrer \cite{Kno}. 

In this paper, let $\mod(R)$ denote the category of finitely generated $R$-modules, $\CM(R)$ its full subcategory of maximal Cohen–Macaulay modules, and $\CM_0(R)$ the full subcategory of $\CM(R)$ consisting of modules locally free on the punctured spectrum; see \Cref{MCM}.
A full subcategory $\X $ of $\mod(R)$ is said to have \emph{finite} (resp. \emph{countable}) representation type if it contains only finitely (resp. countably) many non-isomorphic indecomposable modules. Note that $R$ has finite Cohen--Macaulay type precisely means that $\CM(R)$ has finite representation type.

Recently, Dao and Takahashi \cite{DT2015}  
 introduced the notion of the dimension of a subcategory of an Abelian category with enough projective objects.  
In the Cohen–Macaulay setting, they proved that $R$ has an isolated singularity whenever $\CM_0(R)$ has finite dimension, and that the converse holds when $R$ is moreover equicharacteristic and complete. As an application, they improved the Auslander–Huneke–Leuschke–Wiegand theorem by showing that if $\CM_0(R)$ has finite representation type, then $R$ has an isolated singularity.

The first statement of \Cref{C1} strengthens the improved version of the Auslander–Huneke–Leuschke–Wiegand theorem obtained by Dao and Takahashi \cite{DT2015}, and it follows from \Cref{equivalent conditions}, which characterizes when the $\m$-adic completion $\widehat{R}$ has an isolated singularity. The second statement of \Cref{C1} characterizes the finite syzygy representation type of the category of finite length modules via the finite generation of the Grothendieck group of the category of finitely generated modules over $\widehat{R}$.



    

\begin{theorem}\label{C1}
(See \ref{application} and \ref{dim2}) Let $R$ be a Cohen--Macaulay local ring. Then:
\begin{enumerate}
    \item If $\add\Omega^n_R\CM_0(R)$ has finite representation type for some $n\geq 0$, then $\widehat{R}$ has an isolated singularity.

    \item If $\dim R=2$ and the residue field of $R$ is algebraically closed, then $\add\syz^n_R  \fl(R)$ has finite representation type for some $n\geq 0$ if and only if $\widehat R$ has isolated singularity and the Grothendieck group of $\mod(\widehat R)$ is a finitely generated Abelian group. 
\end{enumerate}
  
\end{theorem} 

In the above result, for each full subcategory $\X$ of $\mod(R)$, $\Omega^n_R \X$ (resp. $\add \X$) is the full subcategory of $\mod(R)$ whose objects are precisely the $n$-th syzygies (resp. direct summands) of modules in $\X$, and $\fl(R)$ denotes the full subcategory of $\mod(R)$ consisting of all finite length modules. In the case $n=1$, \Cref{C1} (1) shows that if $\Omega_R \CM_0(R)$ has finite representation type, then $\widehat R$ has an isolated singularity, and hence so does $R$.

Let $R$ be a Cohen--Macaulay local ring. Schreyer \cite{Sch} conjectured that $R$ has finite Cohen–Macaulay representation type if and only if $\widehat{R}$ does. This conjecture was confirmed by Leuschke and Wiegand \cite{LW2000} in the case where $R$ is excellent.
Theorem \ref{T2}, the main result of Section \ref{Section-ascent-descent}, provides a complete affirmative answer to Schreyer’s conjecture via its case $n = 0$ in part (2). The case $n = 0$ of
Theorem 1.3 (2), i.e., the original Schreyer's conjecture, was also independently
resolved by Toshinori Kobayashi, although he did not make it public; we thank Takahashi for bringing this to our attention. We also consider Schreyer-type questions for higher torsionfree modules. Theorem \ref{T2} answers the corresponding questions affirmatively. In the following, $\TF_n(R)$ is the category of $n$-torsionfree $R$-modules; c.f. \ref{def-torsionfree}.


\begin{thm}\label{T2} (see \ref{cm category version})
Let $R$ be a Cohen--Macaulay local ring with Krull dimension $d$. For each $n\geq 0$, we have:

\begin{enumerate}
 \item $\add\Omega^n_R\CM_0 (R)$ has finite (resp. countable) representation type if and only if $\add\Omega^n_{\widehat{R}}\CM_0 (\widehat{R})$ has finite (resp. countable) representation type.
 
 \item $\add\Omega^n_R\CM(R)$ has finite representation type if and only if $\add\Omega^n_{\widehat{R}}\CM (\widehat{R})$ has finite representation type.

 \item Suppose that $n\in\{d, d+1\}$.
 Then $\TF_n(R)$ has finite representation type if and only if $\TF_n(\widehat{R})$ has finite representation type.
\end{enumerate}

\end{thm}

For an Artin algebra $A$, Chen \cite{Chen:2008} proved that if $A$ is Gorenstein, then the category of finitely generated left Gorenstein projective $A$-modules has finite representation type if and only if every left Gorenstein projective $A$-module is a direct sum of finitely generated ones. This result serves as a Gorenstein analogue of Auslander's celebrated theorem,
which asserts that for an Artin algebra $A$, the category of finitely generated left $A$-modules has finite representation type if and
only if every left $A$-module is a direct sum of finitely generated modules; see \cite{Auslander:1974, Auslander:1982}. In \cite{Beligiannis:adv}, Beligiannis observed that an Artin algebra $A$ satisfies the property that every left Gorenstein projective $A$-module is a direct sum of finitely generated ones if and only if $A$ is virtually Gorenstein and the category of finitely generated left Gorenstein projective $A$-modules has finite representation type.  In \cite{EH}, Eisenbud and Herzog gave a classification of homogeneous Cohen--Macaulay rings with finite Cohen--Macaulay type, and observed that such rings are stretched. 


In Section \ref{Section-rings-F-CM type}, we investigate analogues of Chen’s questions \cite{Chen-arXiv} in the setting of Cohen–Macaulay rings of finite Cohen–Macaulay representation type; see \Cref{context-CM}. The main result of this section, Theorem \ref{T3}, establishes affirmative answers to these questions under the additional assumption that $R$ is complete.



\begin{theorem}\label{T3} (See \ref{Main-finite CM type})
 Let $R$ be a Cohen--Macaulay local ring. Assume $\CM_0(R)$ has finite representation type. Then:
 \begin{enumerate}

 \item  Either $R$ is a hypersurface or $\GProj(R)=\Proj(R)$.

 \item Every Gorenstein projective $\widehat R$-module is a direct sum of finitely generated ones. 

 \end{enumerate}
\end{theorem}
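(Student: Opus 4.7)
The plan is to pass to the completion $\widehat{R}$, apply the classical structure theory of complete Cohen--Macaulay local rings of finite Cohen--Macaulay representation type, and then invoke Beligiannis's characterization (from \cite[Theorem 4.10]{Beligiannis:adv}, as recalled in the introduction) of rings for which every Gorenstein projective module is a direct sum of finitely generated ones.

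First I would observe that Corollary \ref{C1} with $n = 0$ forces $\widehat{R}$ to have an isolated singularity, and then $R$ does too by faithfully flat descent. By Theorem \ref{T2}(1) with $n=0$, $\CM_0(\widehat R)$ has finite representation type; combined with the isolated singularity, $\CM(\widehat{R}) = \CM_0(\widehat R)$, so $\widehat R$ has finite Cohen--Macaulay representation type, and by Theorem \ref{T2}(2) the same is true of $R$. Herzog's theorem then yields a dichotomy: a complete Gorenstein local ring of finite Cohen--Macaulay representation type is a hypersurface, so either $\widehat R$---equivalently $R$---is a hypersurface, or $\widehat R$ is not Gorenstein. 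In the hypersurface case the first alternative of (1) is immediate, and for (2) the ring $\widehat R$ is Gorenstein (hence virtually Gorenstein), with $\Gproj(\widehat R) = \CM(\widehat R)$ of finite representation type, so Beligiannis's theorem directly delivers the conclusion.

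In the remaining non-Gorenstein case, I would invoke the classical fact (in the spirit of Yoshino, Huneke--Leuschke, and Christensen--Piepmeyer--Striuli--Takahashi) that a non-Gorenstein Cohen--Macaulay local ring of finite Cohen--Macaulay representation type admits no non-free finitely generated totally reflexive modules. Applied to both $R$ and $\widehat R$, this gives $\Gproj(R)=\proj(R)$ and $\Gproj(\widehat R)=\proj(\widehat R)$, which trivially have finite representation type.

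The main obstacle, and the reason the theorem belongs to this paper, is to show that in this non-Gorenstein setting both $R$ and $\widehat R$ are virtually Gorenstein; I would rely on the ``new class of virtually Gorenstein rings'' identified in an earlier section of the paper (anticipated by the abstract and presumably developed via Takahashi's dominant local rings), arguing that a Cohen--Macaulay local ring with $\Gproj=\proj$ satisfying our finiteness hypothesis lies in that class. Granting this, Beligiannis's theorem applies to both $R$ and $\widehat R$: every Gorenstein projective module over either ring is a direct sum of finitely generated Gorenstein projectives, and these are projective by what was just shown. Applied to $R$ this gives $\GProj(R)=\Proj(R)$, completing (1); applied to $\widehat R$ it gives (2).
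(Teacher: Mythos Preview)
Your overall strategy---reduce to the completion, split into the hypersurface and non-Gorenstein cases via Herzog, and invoke Beligiannis---matches the paper's architecture, and the hypersurface branch is fine. The gap is in the non-Gorenstein branch, where you propose to obtain $\GProj(R)=\Proj(R)$ by first noting $\Gproj(R)=\proj(R)$ (via \cite{CPST}) and then applying Beligiannis once you have verified that $R$ and $\widehat R$ are virtually Gorenstein. That last verification is where the argument breaks down. You appeal to ``the new class of virtually Gorenstein rings'' from the dominant-ring section, but nothing in the paper (or elsewhere) shows that a Cohen--Macaulay local ring of finite CM type is dominant; dominance is an independent hypothesis in Section~\ref{Section-application}. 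The only route the paper offers to virtual Gorensteinness without a dominance assumption is Lemma~\ref{freeimpliesvirtuallyG}, which already requires $\GProj=\Proj$ as input. So your argument is circular: you need virtual Gorensteinness to run Beligiannis and conclude $\GProj=\Proj$, but the only available source of virtual Gorensteinness presupposes $\GProj=\Proj$.

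The paper avoids this circle entirely. In the non-Gorenstein complete case it does \emph{not} pass through virtual Gorensteinness or Beligiannis at all; instead it uses a classification result for resolving subcategories (\cite[Corollary~6.9]{class}) to show that $\CM(\widehat R)=\res(\omega)$, whence $\D^f(\widehat R)=\thick(\widehat R,\omega)$. Feeding this into the Iyengar--Krause equivalence $\D^f(\widehat R)/\thick(\widehat R,\omega)\simeq(\underline{\GProj}(\widehat R)^c)^{\rm op}$ forces the compact objects of $\underline{\GProj}(\widehat R)$ to vanish, and compact generation then gives $\GProj(\widehat R)=\Proj(\widehat R)$ directly. Virtual Gorensteinness is a \emph{consequence} (Corollary~\ref{answer-b}), not a tool. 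This is the key idea your proposal is missing.
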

In the above result, $\Proj(R)$ (resp. $\GProj(R)$) is the category of all projective (resp. Gorenstein projective) $R$-modules.
For a commutative Noetherian local ring $R$, if the category of finitely generated left Gorenstein projective $R$-modules, denoted by $\Gproj(R)$, has finite representation type, Christensen, Piepmeyer, Striuli, and Takahashi \cite{CPST} showed that either $R$ is Gorenstein or $\Gproj(R)=\proj(R)$; here, $\proj(R)$ is the category of finitely generated projective $R$-modules. In particular, when $R$ is Cohen--Macaulay and of finite Cohen--Macaulay representation type, it follows that either $R$ is Gorenstein or $\Gproj(R)=\proj(R)$; this application can also be deduced from Theorem \ref{T3} (1). 
Indeed, in general, if $\GProj(R) = \Proj(R)$, then $\Gproj(R) = \proj(R)$. Whether the converse holds, however, remains an open question; see Section \ref{Section-rings-F-CM type}.

More recently, Takahashi \cite{Takahashi:2023} introduced the notion of \emph{dominant local rings}, and discovered several remarkable properties of such rings—including a classification of thick subcategories in the bounded derived category of a local ring whose certain localizations are dominant. In Section \ref{Section-application}, we establish \Cref{application-dominant} concerning dominant local rings, providing a class of virtually Gorenstein rings that are not Gorenstein.  Another consequence of Theorem \ref{application-dominant} is the following: For a Cohen–Macaulay local ring $R$ of Krull dimension $2$ with an algebraically closed residue field, if $\Omega_R\CM_0(R)$ has finite representation type, then either $R$ is a hypersurface or $\GProj(R) = \Proj(R) $; see Proposition \ref{several applications}.


\begin{ack}
    We would like to thank Srikanth Iyengar and Ryo Takahashi for their many valuable discussions and comments. Souvik Dey was partially supported by the Charles University Research Center program No.UNCE/SCI/022 and a grant GA \v{C}R 23-05148S from the Czech Science Foundation.  Kaito Kimura was partly supported by Grant-in-Aid for JSPS Fellows 23KJ1117.  Jian Liu was supported by the National Natural Science Foundation of China (No. 12401046) and the Fundamental Research Funds for the Central Universities (Nos. CCNU24JC001, CCNU25JC025, CCNU25JCPT031). Yuya Otake was partly supported by Grant-in-Aid for JSPS Fellows 23KJ1119.
\end{ack}

\section{Notation and terminology}
Throughout this article, let $R$ be a commutative Noetherian ring. Denote by $\Mod(R)$ the category of $R$-modules, and by $\mod(R)$ the full subcategory of $\Mod(R)$ consisting of finitely generated $R$-modules. Let $\fl(R)$ denote the full subcategory of $\mod(R)$ consisting of modules of finite length. We write $\dim(R)$ for the Krull dimension of $R$, and $\depth(M)$ for the depth of an $R$-module $M$ for each $M \in \mod(R)$. For a local ring $(R, \mathfrak{m})$, the $\mathfrak{m}$-adic completion of an $R$-module $M$ is denoted by $\widehat{M}$. For a full subcategory $\X$ of $\mod(R)$, the \textit{additive closure} $\add\X$ is defined as the subcategory of $\mod(R)$ consisting of direct summands of finite direct sum of modules in $\X$.

\begin{chunk}
   \textbf{Spectrum, singular locus, and isolated singularity.}  
Let \( \Spec(R) \) denote the set of all prime ideals of $R$, equipped with the Zariski topology. The closed subsets in this topology are of the form
$$
\V (I) \colonequals \{\p \in \Spec(R) \mid I\subseteq \p\}
$$ for some ideal $I$ of $R$.

  The \emph{singular locus} $\Sing(R)$ of $R$ is the set of prime ideals of $R$ such that $R_\p$ is not regular; recall that a commutative Noetherian ring $R$ is regular if and only if each finitely generated $R$-module has finite projective dimension.
   We say that $R$ has an \textit{isolated singularity} if any non-maximal prime ideal of $R$ does not belong to $\Sing(R)$.  
\end{chunk}
\begin{chunk}
  \textbf{Annihilator and support of modules.}  
  Let $M$ be an $R$-module. We write $\ann_RM$ to be the \emph{annihialtor} of $M$ over $R$. That is, $\ann_RM=\{a\in R\mid a\cdot M=0\}$.  For simplicity, we will use $\ann M$ to represent $\ann_R M$ if there is no confusion. 
The \emph{support} of $M$ over $R$ is
$$\Supp_R M\colonequals\{\p\in\Spec(R)\mid M_\p\neq 0\}.$$
If $M$ is a finitely generated $R$-module, then $\Supp_RM=\V(\ann M)$. 
\end{chunk}
\begin{chunk}
      An $R$-module $M$ is said to be \textit{locally free on the punctured spectrum} if $M_\p$ is a free $R_\p$-module for any non-maximal prime ideal $\p$ of $R$.
Let $\mod_0(R)$ denote the full subcategory of $\mod(R)$ consisting of modules that are locally free on the punctured spectrum.
\end{chunk}
   \begin{chunk}\label{MCM}
  \textbf{Maximal Cohen--Macaulay modules.} A finitely generated $R$-module $M$ is called \emph{maximal Cohen--Macaulay} if $\depth(M_\p)\geq \dim(R_\p)$ for each prime ideals $\p$ of $R$; see \cite{BH, LW, Yoshino} for more details.
   The full subcategory of $\mod(R)$ consisting of maximal Cohen--Macaulay $R$-modules is denoted by $\CM(R)$. A commutative Noetherian ring $R$ is said to be \emph{Cohen--Macaulay} if $R\in\CM(R)$.

      Let $\CM_0(R)$ denote the full subcategory of $\CM(R)$ consisting of modules that are locally free on the punctured spectrum. That is, $\CM_0(R)\colonequals\CM(R)\cap \mod_0(R)$.
   \end{chunk}
\begin{chunk}\label{def of res}
\textbf{Resolving subcategories.}
Let $\X$ be a full subcategory of $\mod(R)$. The subcategory $\X$ is called \emph{resolving} if it contains $\proj(R)$, is closed under direct summands, and it satisfies the following conditions: for any short exact sequence $0\rightarrow M_1\rightarrow M_2\rightarrow M_3\rightarrow 0$ in $\mod(R)$, 
\begin{enumerate}
    \item If $M_1,M_3\in\X$, then $M_2\in\X$;
    \item If $M_2,M_3\in\X$, then $M_1\in\X$. 
\end{enumerate}
For example, if $R$ is Cohen--Macaulay, then $\CM(R)$ is a resolving subcategory of $\mod(R)$.
\end{chunk}
 \begin{chunk}
   \textbf{Finite (resp. countable) representation type}.
   Let $\X$ be a subcategory of $\mod(R)$. 
   We say that $\X$ has \emph{finite (resp. countable) representation type} if there are only finitely (resp. countably) many isomorphism classes of indecomposable modules belonging to $\X$.

It is also common to say that $R$ has finite Cohen--Macaulay type when $\CM(R)$ has finite representation type.
  \end{chunk}

\begin{chunk}\label{def of syzygy}
\textbf{Syzygy modules.} 
For a finitely generated $R$-module $M$, let $\Omega_RM$ denote the \emph{first syzygy} of $M$. That is, there is a short exact sequence
$$0\rightarrow \Omega_R M\rightarrow P\rightarrow  M\to 0,$$
where $P$ is a finitely generated projective $R$-module.
By Schanuel’s Lemma, $\Omega_RM$
is independent of the choice of the projective resolution of M up to projective summands.

   We put $\Omega^0_RM=M$, and the {\em $n$-th syzygy} $\Omega^n_R M$ is defined inductively by $\Omega^n_R M=\Omega_R(\Omega^{n-1}_RM)$ for each $n>0$. If there is no confusion, we will use $\Omega^nM$ to represent $\Omega^n_R M$. 
For a subcategory $\X$ of $\mod(R)$, we denote
$$\Omega^n\X \colonequals \{M\in\mod(R)\mid M\cong \Omega^nX \text{ for some } X\in \X\}.$$
For a local ring $R$ and a full subcategory $\Y$ of $\mod(\widehat R)$, we will use $\Omega^n\Y$ to represent $\Omega^n_{\widehat R}\Y$ in this article. 
\end{chunk}

  \begin{chunk}\label{def-torsionfree}
      \textbf{Auslander transpose and $n$-torsionfree modules.} Assume $R$ is a local ring and $M$ is a finitely generated $R$-module. Let $$F=\cdots \rightarrow  F_1\xrightarrow{d_1}F_0\rightarrow  M\to 0$$ 
be a minimal free resolution of $M$.  Denote by $(-)^\ast$ the $R$-dual $\Hom_R(-,R)$. The \emph{(Auslander) transpose} of $M$, denoted by $\Tr_R M$, is defined to be the cokernal of $(d_1)^\ast$; see \cite{AB} for more details. Note that $\Omega^2\Tr_RM\cong M^\ast$ and there exists an exact sequence
$$
0\to \Ext^1_R(\Tr_RM,R)\to M\xrightarrow{\varphi_M}M^{\ast\ast}\to\Ext^2_R(\Tr_RM,R)\to0,
$$
where $\varphi_M:M\to M^{\ast\ast}$ is the canonical homomorphism given by $\varphi_M(x)(f)=f(x)$ for $x\in M$ and $f\in M^\ast$.
Moreover, the image of $\varphi_M$ is isomorphic to $\Omega_R\Tr_R\Omega_R\Tr_R M$ up to projective summands; see \cite[Appendix]{AB}.

  For each $n\geq 0$,  $M$ is called \emph{$n$-torsionfree} if $\Ext_R^i(\Tr_R M,R)=0$ for all $1\le i\le n$. For each $M\in\mod(R)$, if $M$ is $n$-torsionfree, then $M$ is a $n$-th syzygy module; see \cite[Theorem 2.17]{AB}.
 The full subcategory of $\mod(R)$ consisting of $n$-torsionfree $R$-modules is denoted by $\TF_n(R)$. We denote $\TF_n^0(R)\colonequals \TF_n(R)\cap \mod_0(R)$.
  \end{chunk}
\begin{chunk}
  \textbf{Gorenstein rings and hypersurfaces.}  For a commutative Noetherian ring $R$, it is said to be \emph{Gorenstein} provided that $R_\p$ has finite injective dimension as an $R_\p$-module for each prime ideal $\p$ of $R$. 
For example, every regular ring is Gorenstein. 

For a commutative Noetherian local ring $R$, it is said to be a \emph{hypersurface} if its completion $\widehat R$ is isomorphic to a quotient of a regular local ring by a principal ideal. Every hypersurface is Gorenstein. 
\end{chunk}
\begin{chunk}
\textbf{Gorenstein projective (resp. injective) modules.}
The notions of Gorenstein projective and Gorenstein injective modules will be used in Section \ref{Section-rings-F-CM type} and Section \ref{Section-application}. For details, see \cite[Chapter 10]{Enochs-Jenda}.

Let $\Proj(R)$ (resp. $\Inj(R)$) denote the category of projective (resp. injective) $R$-modules.
An $R$-module $M$ is called \emph{Gorenstein projective} (resp. \emph{Gorenstein injective}) if there exists an acyclic complex of projective (resp. injective) $R$-modules
$$
\mathbf{P} : \cdots \longrightarrow P_1 \xrightarrow{d_1} P_0 \xrightarrow{d_0} P_{-1} \longrightarrow \cdots \quad \text{(resp. } \mathbf{I} : \cdots \longrightarrow I_1 \xrightarrow{\partial_1} I_0 \xrightarrow{\partial_0} I_{-1} \longrightarrow \cdots\text{)}
$$
such that $\Hom_R(\mathbf{P}, Q)$ (resp. $\Hom_R(E, \mathbf{I})$) remains acyclic for every $Q \in \Proj(R)$ (resp. $E \in \Inj(R)$), and $M$ is isomorphic to the image of $d_0$ (resp.  $\partial_0$). For example, any projective (resp. injective)  $R$-module is Gorenstein projective (resp. Gorenstein injective). 
We write $\GProj(R)$ (resp. $\GInj(R)$) as the full subcategory of $\Mod(R)$ consisting of Gorenstein projective (resp. Gorenstein injective) modules.  

Let $\Gproj(R)$ denote the full subcategory of $\mod(R)$ consisting of finitely generated Gorenstein projective $R$-modules. If $R$ is Gorenstein, then $\Gproj(R)=\CM(R)$; see \cite[Proposition 3.8]{AB} and \cite[3.1.24]{BH}. 
\end{chunk}

\begin{chunk}\label{defofvirtuallyG}
\textbf{Virtually Goresntein rings.}
In \cite{Beligiannis-Reiten}, Beligiannis and Reiten introduced the notion of virtually Gorenstein Artin algebras. This concept was extended to commutative Noetherian rings of finite Krull dimension by Zareh-Khoshchehreh, Asgharzadeh, and Divaani-Aazar \cite{ZAD}. Recently, Di, Liang, and Wang \cite{DLW} further generalized the notion of virtually Gorenstein rings to the setting of arbitrary rings.

Precisely, a commutative Noetherian ring $R$ is said to be \emph{virtually Gorenstein} if $ \GProj(R)^\perp=^\perp \GInj(R)$, where $\GProj(R)^\perp\colonequals\{Y\in\Mod(R)\mid \Ext^i_R(X,Y)=0\text{ for all }i>0 \text{ and }X\in\GProj(R)\}$ and $^\perp\GInj(R)\colonequals\{X\in\Mod(R)\mid \Ext^i_R(X,Y)=0\text{ for all }i>0 \text{ and }Y\in\GInj(R)\}$. It is known that any Gorenstein ring is virtually Gorenstein; see \cite[Example 3.13 (i)]{ZAD} for finite Krull dimension case and \cite[Theorem A.1]{Iyengar-Krause2022} for the general case. 
\end{chunk}

\begin{chunk}
    \textbf{Thick subcategories.} Let $\T$ be a triangulated category. A full subcategory $\mathcal C$ of $\T$ is called \emph{thick} provided that it is closed under suspensions, cones, and direct summands.
For a subcategory $\mathcal C$ of $\T$. Let $\thick_\T(\mathcal C)$ denote the smallest thick subcategory of $\T$ containing $\mathcal C$.
\end{chunk}
\begin{chunk}
    \textbf{Derived categories.} 
    Let $\D(R)$ denote the \emph{derived category} of $R$-modules. It is a triangulated category equipped with a suspension functor $[1]$, where for each complex $X$, the shifted complex $X[1]$ is given by $(X[1])_i = X_{i-1}$ and $\partial_{X[1]} = -\partial_X$. The \emph{bounded derived category}, denoted by $\D^f(R)$, is the full subcategory of $\D(R)$ consisting of complexes $X$ whose total homology $\bigoplus_{i\in\Z} \h_i(X)$ is finitely generated over $R$. Note that $\D^f(R)$ inherits the triangulated structure from $\D(R)$ and forms a thick subcategory of $\D(R)$.
\end{chunk}

\section{Finite syzygy representation type}
In this section, we investigate finite syzygy representation type. The main results are Theorems \ref{equivalent conditions} and \ref{main}.  In the following, the notation $\Omega^n\fl(R)$ (resp. $\Omega^n\fl(\widehat R)$) will represent $\Omega^n_R\fl(R)$ (resp. $\Omega^n_{\widehat R}\fl(\widehat R))$ for each $n\geq 0$; see \ref{def of syzygy}. 
For subcategories $\X,\Y$ of $\mod(R)$, set $\Ext_R(\X,\Y)\colonequals \bigoplus_{i>0,X\in\X,Y\in\Y}\Ext^i_R(X,Y).$
\begin{lem}\label{contain}
       Let $R$ be a commutative Noetherian local ring and $\X$ be a subcategory of $\mod(R)$. If $\mathcal X$ contains $\Omega^n\fl({R})$ for some $n\geq 0$, then, for each $m\geq 0$, 
       $$\Sing(R)\subseteq  \V(\ann\Ext_R(\mathcal{X}, \Omega^m\fl(R))) \text{ and } \Sing(\widehat R)\subseteq  \V(\widehat{R}\otimes_R \ann\Ext_R(\mathcal{X}, \Omega^m\fl(R))). $$
\end{lem}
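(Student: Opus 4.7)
The plan is to reduce the problem to a statement about the cohomological annihilator of $R$.  First, by monotonicity of the annihilator under enlarging the first category, one has
$$\ann\Ext_R(\mathcal{X},\Omega^m\fl(R))\subseteq\ann\Ext_R(\Omega^n\fl(R),\Omega^m\fl(R)),$$
so it is enough to prove both inclusions when $\mathcal{X}=\Omega^n\fl(R)$.  The standard dimension-shifting isomorphism $\Ext^i_R(\Omega^n L',\Omega^m L)\cong \Ext^{i+n+m}_R(L',L)$ for $i\geq 1$ and $L,L'\in\fl(R)$ then rewrites the target ideal as
$$I:=\bigcap_{L,L'\in\fl(R),\,j\geq n+m+1}\ann_R\Ext^j_R(L',L).$$
The compatibilities $\fl(R)=\fl(\widehat R)$, $\Ext^j_R(L',L)=\Ext^j_{\widehat R}(L',L)$ for $L,L'\in\fl(R)$, and $\widehat R\cdot\ann_R(L)=\ann_{\widehat R}(L)$ for finite length $L$, show that $\widehat R\otimes_R I$ is contained in the ideal $I_{\widehat R}$ playing the analogous role over $\widehat R$.

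The heart of the argument is to prove $I\subseteq\p$ for every $\p\in\Sing(R)$.  For $\p=\m$, this is immediate: non-regularity of $R$ at $\m$ gives $\pd_R k=\infty$, so $\Ext^{n+m+1}_R(k,k)$ is a nonzero $k$-vector space with annihilator $\m$, which contains $I$.  For $\p\neq\m$, the plan is to establish the containment $I\subseteq\ca(R)$ in the cohomological annihilator of Iyengar--Takahashi, and then invoke the general inclusion $\Sing(R)\subseteq\V(\ca(R))$: for $\p\in\Sing(R)$, $R_\p$ is non-regular so $\ca(R_\p)\subseteq\p R_\p$, and the extension $\ca(R)R_\p\subseteq\ca(R_\p)$ forces $\ca(R)\subseteq\p$.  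The $\widehat R$-assertion then reduces to the same problem over $\widehat R$ via the containment $\widehat R\otimes_R I\subseteq I_{\widehat R}$, using the sharper equality $\Sing(\widehat R)=\V(\ca(\widehat R))$ valid for complete local rings.

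The main obstacle lies in establishing $I\subseteq\ca(R)$, which amounts to showing that any element annihilating every $\Ext^j_R(L',L)$ with $L,L'\in\fl(R)$ and $j\geq n+m+1$ must annihilate every $\Ext^j_R(M,N)$ with $M,N\in\mod(R)$ and $j$ sufficiently large.  Passing to $\widehat R$, the extension in the second argument uses the Mittag--Leffler isomorphism $\Ext^j_{\widehat R}(M,N)=\varprojlim_t \Ext^j_{\widehat R}(M,N/\m^t N)$, which is valid because a finitely generated free resolution $P_\bullet\to M$ makes $\Hom(P_\bullet,-)$ commute with the surjective inverse system $\{N/\m^t N\}$.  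The extension in the first argument is the more delicate point and will likely require Matlis duality to convert Ext's into Tor's (which commute with direct limits in either variable), or alternatively a Koszul spectral sequence on a system of parameters.  Once both extensions are carried out, any $a\in I$ yields $a\cdot\widehat R\subseteq\ca(\widehat R)$, and restriction of scalars along $R\to\widehat R$ delivers $a\in\ca(R)$, completing the proof.
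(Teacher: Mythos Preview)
Your reduction to $\mathcal{X}=\Omega^n\fl(R)$ and your passage to $\widehat R$ via the equivalence $\fl(R)\simeq\fl(\widehat R)$ both match the paper's argument. The divergence is in the core step $\Sing(R)\subseteq\V(I)$: the paper does not attempt to prove this, but simply invokes \cite[Proposition 3.6]{Mifune2024}, which gives exactly this inclusion (and its $\widehat R$-analogue). Everything else in the paper's proof is the bookkeeping you already have.

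Your proposal, by contrast, tries to manufacture this inclusion through the chain $I\subseteq\ca(R)$ and $\Sing(R)\subseteq\V(\ca(R))$, and here there is a genuine gap. You yourself flag the extension in the first Ext-variable from $\fl(R)$ to $\mod(R)$ as the ``main obstacle'' and offer only a hope (``will likely require Matlis duality \ldots\ or alternatively a Koszul spectral sequence'') rather than an argument. Neither suggestion is carried out, and neither is routine: Matlis duality converts the problem into one about $\Tor$ against Artinian modules, but you still need to pass from Artinian second arguments back to finitely generated ones, and a Koszul argument requires care to produce a uniform degree bound independent of the modules. Until this step is actually executed, the case $\p\in\Sing(R)\setminus\{\m\}$ remains open and the proof is incomplete. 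Even the Mittag--Leffler step for the second variable deserves more justification than you give: surjectivity of the transition maps on the Hom-complexes does not by itself kill $\varprojlim^1$ on cohomology.

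In short, you are attempting to reprove Mifune's proposition inside this lemma. If you are willing to cite it, your proof collapses to the paper's; if not, you owe a complete argument for the step you labeled the main obstacle.
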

\begin{proof}
   Set $I=\ann\Ext(\Omega^n\fl(R),\Omega^m\fl(R))$. 
 By \cite[Proposition 3.6]{Mifune2024}, $\Sing(R)\subseteq \V(I)$. Since $\Omega^n\fl(R)\subseteq\X$, we have
 $\ann\Ext_R(\mathcal{X}, \Omega^m\fl(R))\subseteq I$. Hence,
 $$\Sing(R)\subseteq \V(I)\subseteq \V(\ann\Ext_R(\mathcal{X}, \Omega^m\fl(R))).$$
 
 Note that there is an equivalence
    $
    \widehat{R}\otimes_R-\colon \fl(R)\xrightarrow \cong \fl(\widehat{R}).
    $
    This implies that $I\subseteq \ann_{\widehat{R}}\Ext_{\widehat R}(\Omega^n\fl(\widehat{R}),\Omega^m\fl(\widehat{R})$. In particular, $\widehat I\subseteq \ann_{\widehat{R}}\Ext_{\widehat R}(\Omega^n\fl(\widehat{R}),\Omega^m\fl(\widehat{R}))$. It follows that 
    $$
    \widehat{R}\otimes_R \ann\Ext_R(\mathcal{X}, \Omega^m\fl(R))\subseteq \widehat{R}\otimes_R I=\widehat{I}\subseteq \ann_{\widehat{R}}\Ext_{\widehat R}(\Omega^n\fl(\widehat{R}),\Omega^m_{\widehat{R}}\fl(\widehat{R})).
    $$
    This yields the second inclusion below:
    $$
\Sing(\widehat R)\subseteq  \V(\ann_{\widehat{R}}\Ext_{\widehat R}(\Omega^n\fl(\widehat{R}),\Omega^m\fl(\widehat{R})))\subseteq \V(\widehat{R}\otimes_R \ann\Ext_{R}(\X,\Omega^m\fl({R}))),
$$
where the first inclusion is by \cite[Proposition 3.6]{Mifune2024}.
\end{proof}
\begin{chunk}
    Let $\X$ be a subcategory of $\mod(R)$. Set $$\NF(\X)\colonequals\{\p\in\Spec(R)\mid X_\p \text{ is not free for some }X\in\X\}$$ to be the \emph{nonfree locus} of $\X$. If $\X=\{X\}$ consists of a single module $X\in\mod(R)$, we write $\NF(X)$ in place of $\NF(\{X\})$. In this case, $\NF(X)=\Supp_R \Ext^1_R(X,\Omega X)$.
\end{chunk}

\begin{chunk}\label{composition}
Let $\X$ and $\Y$ be subcategories of $\mod(R)$. Denote by $\X\circ\Y$ the subcategory of $\mod(R)$ consisting of modules $Z$ which fit into a short exact sequence $0\to X\to Z\to Y\to 0$ with $X\in\X$ and $Y\in\Y$. In \cite[Definitions 2.1 and 5.1]{DT2014}, 
Dao and Takahashi introduced two constructions based on this operation:

(1)  An
ascending chain of full subcategories built out of $\X$ as follows:
   For $r=1$, $|\X|_1\colonequals |\X|$, where $|\X|\colonequals \add\X$; For $r>1$,  $|\X|_r\colonequals ||\X|_{r-1}\circ \X|$. For each $i,j\geq 1$, $||\mathcal X|_i\circ |\mathcal X|_j|=|\mathcal X|_{i+j}$, and hence $||\mathcal X|_i|_j=|\mathcal X|_{ij}$.

  (2)  The \emph{ball of radius $r$ centered at $\mathcal X$} as follows:
   For $r=1$, $[\mathcal{X}]_1\colonequals [\mathcal{X}]$, where $[\mathcal X]\colonequals\add\{R\cup \bigcup_{i\geq 0}\Omega^i\X\}$;
 For $r> 1$, $[\mathcal X]_r\colonequals[[\mathcal X]_{r-1}\circ [\mathcal X]]$. 
Similarly, this satisfies $[[\mathcal X]_i\circ [\mathcal X]_j]=[\mathcal X]_{i+j}$ for each $i,j\geq 1$, and hence $[[\mathcal X]_i]_j=[\mathcal X]_{ij}$.

In \cite[Definition 3.3]{DT2015}, Dao and Takahashi introduced the \emph{dimension} of $\X$, denoted by
$\dim \X$, to be the infimum of the integers $n \geq 0$ such that $\X = [G]_{n+1}$ for some
$G \in  \X$.
\end{chunk}
\begin{lem}\label{radius finite}
    Let $\mathcal X$ be a subcategory of $\mod(R)$.
    Then:
\begin{enumerate}

    \item $\NF(\mathcal X)\subseteq \V(\ann\Ext_R(\mathcal{X}, \mod(R))).$

    \item  If there exists $G\in \mod(R)$ and $i\geq 0$ such that $\mathcal X\subseteq [G]_i$, then $$\V(\ann\Ext_R(\mathcal{X}, \mod(R)))\subseteq \NF(G).$$
\end{enumerate}

\end{lem}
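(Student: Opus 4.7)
The strategy for both inclusions is to control the annihilator of each individual Ext group, using the identity
$\ann_R \Ext_R(\mathcal Y, \mod(R)) = \bigcap_{Y \in \mathcal Y,\, k \geq 1,\, N \in \mod(R)} \ann_R \Ext^k_R(Y, N)$
valid for any subcategory $\mathcal Y \subseteq \mod(R)$.

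For (1), I would pick $\p \in \NF(\mathcal X)$ and choose $X \in \mathcal X$ with $X_\p$ not $R_\p$-free. The identification $\NF(X) = \Supp_R \Ext^1_R(X, \Omega X)$ recorded just before the lemma gives $\p \in \V(\ann_R \Ext^1_R(X, \Omega X))$, where the passage from support to $\V(\ann)$ uses that $\Ext^1_R(X, \Omega X)$ is finitely generated. Since $X, \Omega X \in \mod(R)$, this Ext is a direct summand of $\Ext_R(\mathcal X, \mod(R))$, so $\ann_R \Ext_R(\mathcal X, \mod(R)) \subseteq \ann_R \Ext^1_R(X, \Omega X)$; applying $\V$ reverses the inclusion and places $\p \in \V(\ann_R \Ext_R(\mathcal X, \mod(R)))$.

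For (2), I would first prove by induction on $r$ that every $X \in [G]_r$ satisfies $\ann_R \Ext_R(\{X\}, \mod(R)) \supseteq (\ann_R \Ext_R(\{G\}, \mod(R)))^r$. The relevant stability properties are: for a short exact sequence $0 \to A \to B \to C \to 0$ the long exact Ext sequence yields $\ann \Ext(A, -) \cdot \ann \Ext(C, -) \subseteq \ann \Ext(B, -)$; the dimension-shift $\Ext^k_R(\Omega A, N) \cong \Ext^{k+1}_R(A, N)$ for $k \geq 1$ gives $\ann \Ext(\Omega A, -) \supseteq \ann \Ext(A, -)$; direct summands enlarge this ideal; and $\ann \Ext(R, -) = R$. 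Unwinding the recursion $[G]_r = [[G]_{r-1} \circ [G]]$ with these rules proves the claim, so intersecting over $X \in \mathcal X \subseteq [G]_i$ gives $\V(\ann_R \Ext_R(\mathcal X, \mod(R))) \subseteq \V(\ann_R \Ext_R(\{G\}, \mod(R)))$.

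The main obstacle is the final inclusion $\V(\ann_R \Ext_R(\{G\}, \mod(R))) \subseteq \NF(G)$: assuming $\p \notin \NF(G)$, so that $G_\p$ is $R_\p$-free, I must exhibit an element of $\ann_R \Ext_R(\{G\}, \mod(R)) \setminus \p$. Since $G$ is finitely presented, there exists $s \in R \setminus \p$ with $G_s$ free of rank $n$ over $R_s$; choosing elements of $G$ which become a basis of $G_s$ gives a map $\phi \colon R^n \to G$ whose localization $\phi_s$ is an isomorphism. Using the natural isomorphism $\Hom_R(G, R^n)_s \cong \Hom_{R_s}(G_s, R_s^n)$, a clearing-of-denominators argument (first in $\Hom$, then absorbing the $s$-torsion of $\End_R(G)$) yields a map $\psi \colon G \to R^n$ and an integer $E \geq 0$ with $\phi \circ \psi = s^E \cdot \id_G$ in $\End_R(G)$. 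Because $\Ext^k_R(R^n, N) = 0$ for all $k \geq 1$ and all $N \in \mod(R)$, the endomorphism $s^E \cdot \mathrm{id}$ of $\Ext^k_R(G, N)$ factors through zero, so $s^E \in \ann_R \Ext_R(\{G\}, \mod(R))$. Since $s \notin \p$, this completes the proof.
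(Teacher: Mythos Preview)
Your proof is correct and follows the same overall architecture as the paper's. For part (1) you unpack exactly the identity $\NF(\mathcal X)=\Supp_R\Ext_R(\mathcal X,\mod(R))$ that the paper simply cites from \cite[Remark 5.2]{DT2015}, so there is no substantive difference.

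For part (2) the approaches diverge mildly. The paper invokes two external lemmas: \cite[Lemma 5.3]{DT2015} for the equality $\V(\ann\Ext_R([G]_i,\mod(R)))=\V(\ann\Ext_R(G,\mod(R)))$, and \cite[Lemma 2.14]{IT2016} for the identity $\ann\Ext_R(G,\mod(R))=\ann\Ext^1_R(G,\Omega G)$, which immediately yields $\V(\ann\Ext_R(G,\mod(R)))=\NF(G)$ via the support of a single finitely generated Ext. You instead give self-contained arguments for both: your induction on $r$ reproves the content of the first lemma (showing $(\ann\Ext_R(G,\mod(R)))^r$ lands in $\ann\Ext_R(X,\mod(R))$ for $X\in[G]_r$), and your clearing-of-denominators construction of $\psi$ with $\phi\circ\psi=s^E\cdot\id_G$ replaces the second lemma entirely. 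Your route is more elementary and avoids external references; the paper's route is shorter on the page and isolates the conceptual fact that the full Ext-annihilator of $G$ is already detected by the single group $\Ext^1_R(G,\Omega G)$.
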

\begin{proof}

    (1) The statement follows from the equality $\NF(\X)=\Supp_R\Ext_R(\X,\mod(R))$; see \cite[Remark 5.2]{DT2015}.
Next, we prove (2). Since $\mathcal X\subseteq [G]_i$, we have
    $$ \V(\ann\Ext_R(\mathcal{X}, \mod(R)))\subseteq\V(\ann\Ext_R([G]_i, \mod(R))).$$ It follows from \cite[Lemma 5.3]{DT2015} that $\V(\ann\Ext_R([G]_i, \mod(R)))=\V(\ann\Ext_R(G, \mod(R)))$.  Note that $\ann\Ext_R(G,\mod(R))=\ann\Ext^1_R(G,\Omega G)$; see \cite[Lemma 2.14]{IT2016}. Thus, we conclude that
 $$
  \V(\ann\Ext_R(\mathcal{X}, \mod(R)))\subseteq\V(\ann \Ext^1_R(G,\Omega G))=\NF(G).
 $$
 This completes the proof.
\end{proof}
 \begin{chunk}\label{def-of-ca}
     Let $R$ be a commutative Noetherian ring. For each integer $n \ge 0$, denote by $\ca^n(R)$ the ideal consisting of elements $a \in R$ such that $a \cdot \Ext_R^n(M,N) = 0$ for all finitely generated $R$-modules $M$ and $N$.
   The {\em cohomology annihilator}, introduced by Iyengar and Takahashi \cite[Definition 2.1]{IT2016},  is defined as the union
$$
\ca(R) := \bigcup_{n \ge 0} \ca^n(R).
$$
Since $R$ is Noetherian, the ascending chain of ideals
$
\ca^0(R) \subseteq \ca^1(R) \subseteq \ca^2(R) \subseteq \cdots
$
stabilizes, and hence $\ca(R) = \ca^n(R)$ for some $n \ge 0$.
 \end{chunk}
\begin{prop}\label{Singular locus equal}
  Let $R$ be a quasi-excellent local ring. Then
  $$
  \Sing(R)=\V(\ann\Ext_R(\mathcal{X},\mathcal{Y}))
  $$
  for each $\Omega^n\fl(R)\subseteq\mathcal{X}\subseteq \Omega^m\mod(R)$  and each  $\Omega^l\fl(R)\subseteq \mathcal Y\subseteq \mod(R)$ with $n,l\geq 0$ and $m\geq \dim(R)$. 
\end{prop}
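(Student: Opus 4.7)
The plan is to establish the two containments separately.

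For $\Sing(R)\subseteq \V(\ann\Ext_R(\mathcal{X},\mathcal{Y}))$, I would apply Lemma \ref{contain} directly, using $\Omega^n\fl(R)\subseteq\mathcal{X}$: taking the free integer in the statement of that lemma to be $l$ yields $\Sing(R)\subseteq \V(\ann\Ext_R(\mathcal{X},\Omega^l\fl(R)))$. Since $\Omega^l\fl(R)\subseteq\mathcal{Y}$, the inclusion $\ann\Ext_R(\mathcal{X},\mathcal{Y})\subseteq\ann\Ext_R(\mathcal{X},\Omega^l\fl(R))$ holds, and it reverses on passing to vanishing loci to produce the desired containment.

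For the reverse inclusion $\V(\ann\Ext_R(\mathcal{X},\mathcal{Y}))\subseteq\Sing(R)$, I would route through the cohomology annihilator. The first step is a dimension shift: every $X\in\mathcal{X}\subseteq\Omega^m\mod(R)$ has the form $X\cong\Omega^m Z$ for some $Z\in\mod(R)$, so for each $i\geq 1$ and each $Y\in\mathcal{Y}$ one has $\Ext^i_R(X,Y)\cong\Ext^{i+m}_R(Z,Y)$. Because the chain $\ca^1(R)\subseteq\ca^2(R)\subseteq\cdots$ is ascending (by the same dimension shift), every element of $\ca^{m+1}(R)$ lies in $\ca^{m+i}(R)$ and thus annihilates $\Ext^{m+i}_R(Z,Y)$; hence $\ca^{m+1}(R)\subseteq\ann\Ext_R(\mathcal{X},\mathcal{Y})$. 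The second step is to invoke the input, essentially due to Iyengar--Takahashi, that for a quasi-excellent local ring of Krull dimension $d$ one has $\V(\ca^n(R))=\Sing(R)$ for every $n>d$. The hypothesis $m\geq d$ ensures $m+1>d$, so combining the two steps gives $\V(\ann\Ext_R(\mathcal{X},\mathcal{Y}))\subseteq\V(\ca^{m+1}(R))=\Sing(R)$.

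The principal obstacle is the second ingredient: the equality $\V(\ca^n(R))=\Sing(R)$ in the quasi-excellent setting for $n>\dim R$. It encodes a uniform annihilator property, whereby some fixed power of the defining ideal of $\Sing(R)$ annihilates $\Ext^n_R(-,-)$ uniformly across all pairs of finitely generated modules once $n$ exceeds $\dim R$. Both hypotheses of the proposition enter precisely here: quasi-excellence provides the uniform annihilator, while $m\geq\dim R$ is exactly what is needed, after dimension shifting, to place every $\Ext$ at issue into a cohomological degree where that annihilator is available.
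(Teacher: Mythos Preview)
Your proposal is correct and follows essentially the same route as the paper: Lemma~\ref{contain} for the inclusion $\Sing(R)\subseteq \V(\ann\Ext_R(\mathcal{X},\mathcal{Y}))$, and the containment $\ca^{m+1}(R)\subseteq\ann\Ext_R(\mathcal{X},\mathcal{Y})$ together with the fact that $\ca^{d+1}(R)$ cuts out $\Sing(R)$ for the reverse. One bibliographic note: the equality $\V(\ca^{d+1}(R))=\Sing(R)$ for quasi-excellent local rings is cited in the paper as \cite[Theorem 1.1]{Kimura} rather than Iyengar--Takahashi.
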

\begin{proof}
  Set $d=\dim(R)$. By Lemma \ref{contain},  one has
$\Sing(R)\subseteq \V(\ann\Ext_R(\Omega^n\fl(R),\Omega^l\fl(R))).$ Combining with $\Omega^n\fl(R)\subseteq \mathcal X$ and $\Omega^l\fl(R)\subseteq \mathcal Y$, we get that $\Sing(R)\subseteq \V(\ann\Ext_R(\mathcal X,\mathcal Y))$.
   
It remains to prove $\V(\ann\Ext_R(\mathcal X,\mathcal{Y}))\subseteq \Sing(R)$.
Note that $\ca^{d+1}(R)\subseteq \ann\Ext_R(\mathcal{X},\mathcal{Y})$ as $\mathcal X\subseteq \Omega^m\mod(R)$ with $m\geq d$. Hence, 
$$
\V(\ann\Ext_R(\mathcal{X},\mathcal{Y}))\subseteq \V(\ca^{d+1}(R)).
$$
Since $R$ is quasi-excellent, \cite[Theorem 1.1]{Kimura} implies that $\ca^{d+1}(R)$ defines the singular locus of $R$. That is,
$
\Sing(R)=\V(\ca^{d+1}(R)).
$
This completes the proof.
\end{proof}

In \cite[Theorem 1.1 (1)]{DT2015}, Dao and Takahashi observed that, for an equicharacteristic complete Cohen--Macaulay local ring $R$ with a perfect residue field, the ring $R$ has an isolated singularity if and only if the dimension of $\CM_0(R)$ is finite, that is, there exists an object $G\in\CM_0(R)$ and an integer $r\ge0$ such that $\CM_0R=[G]_r$.

When $R$ is not necessarily Cohen--Macaulay, we may not know whether $\CM_0(R)$ contains any nontrivial objects, so we need to consider a different subcategory, as we do below. It is worth mentioning that our study applies to arbitrary local rings, without imposing any additional assumptions.
\begin{theorem}\label{equivalent conditions}
   Let $(R,\m,k)$ be a commutative Noetherian local ring with depth $t$ and Krull dimension $d$.  For a subcategory $\X$ of $\mod(R)$, consider the following conditions:
\begin{enumerate}
\item $\X\subseteq |\bigoplus_{i=0}^d\Omega^ik|_r$ for some $r\geq 1$.

 \item There exists $G\in\mod_0(R)$such that $\mathcal X\subseteq |G|_r$ for some $r\geq 1$.
 
    \item There exists $G\in\mod_0(R)$ such that $\mathcal X\subseteq [G]_r$ for some $r\geq 1$.

\item For each subcategory $\mathcal Y$ of $\mod(R)$, the ideal $\ann\Ext_R(\mathcal X,\mathcal{Y})$ contains some power of the maximal ideal.

    \item There exists a subcategory $\mathcal Y$ of $\mod(R)$ containing $\Omega\X$ such that the ideal $\ann\Ext_R(\mathcal X,\mathcal{Y})$ contains some power of the maximal ideal. 

\item $\ca(R)$ contains some power of the maximal ideal. 

    \item  $\widehat R$ has an isolated singularity.

\end{enumerate}
Then $(1)\iff (2)\iff (3)\iff (4)\iff (5)$ and $(6)\iff (7)$. The implication $(5)\Longrightarrow (6)$ holds if, in addition, $\mathcal X$ contains $\Omega^n\fl(R)$ for some $n\geq 0$.

All the above conditions are equivalent if, in addition, $\Omega^n\fl(R)\subseteq\mathcal{X}\subseteq \Omega^m\mod(R)$ for some $n\geq 0$ and $m\geq d$. In this case, they are also equivalent to the following condition:

~~~(8) $\X\subseteq |\bigoplus_{i=t}^d\Omega^ik|_r$ for some $r\geq 1$.
\end{theorem}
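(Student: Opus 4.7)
The plan is to partition the seven conditions into three groups and treat them sequentially: an unconditional cycle $(1)\Rightarrow(2)\Rightarrow(3)\Rightarrow(4)\Rightarrow(5)\Rightarrow(1)$; an unconditional equivalence $(6)\Leftrightarrow(7)$; and two one-way ``bridges'' $(5)\Rightarrow(7)$ and $(7)\Rightarrow(5)$ invoking the hypotheses $\Omega^n\fl(R)\subseteq\X$ and $\X\subseteq\Omega^m\mod(R)$ with $m\geq d$ respectively. Finally I handle $(1)\Leftrightarrow(8)$ in the two-sided setting.

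\textbf{Cycle.} Take $G=\bigoplus_{i=0}^{d}\Omega^i k$; it lies in $\mod_0(R)$ since $k$ does, and $|G|_r\subseteq[G]_r$ gives $(1)\Rightarrow(2)\Rightarrow(3)$. For $(3)\Rightarrow(4)$ apply Lemma~\ref{radius finite}(2): $\NF(G)\subseteq\{\m\}$ forces $\V(\ann\Ext_R(\X,\mod(R)))\subseteq\{\m\}$, so $\m^s\subseteq\ann\Ext_R(\X,\Y)$ for every subcategory $\Y$; $(4)\Rightarrow(5)$ is tautological. The main obstacle is $(5)\Rightarrow(1)$: from $\m^s\subseteq\ann\Ext_R(\X,\Y)$ with $\Y\supseteq\Omega\X$, one combines the standard factorization principle (each element of $\ann\Ext^1_R(X,\Omega X)$ makes multiplication on $X$ factor through a free module) with the length-$s$ filtration of $R/\m^s$ by $\m^j/\m^{j+1}$ (whose subquotients are copies of $k$), and performs iterated syzygy decompositions, to exhibit each $X\in\X$ in $|\bigoplus_{i=0}^{d}\Omega^i k|_r$ with $r$ depending only on $s$ and $d$. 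The truncation at $d$ is natural since deeper syzygies add nothing past the Krull dimension.

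\textbf{Completion and bridges.} For $(6)\Leftrightarrow(7)$, $\widehat R$ is complete and hence excellent, so by \cite[Theorem~1.1]{Kimura} we have $\Sing(\widehat R)=\V(\ca^{d+1}(\widehat R))$, making $(7)$ equivalent to $\widehat\m^s\subseteq\ca^{d+1}(\widehat R)$; faithful flatness of $R\to\widehat R$ and the isomorphism $\Ext^i_R(M,N)\otimes_R\widehat R\cong\Ext^i_{\widehat R}(\widehat M,\widehat N)$ for finitely generated modules, together with standard descent, translate this into $\m^s\subseteq\ca^{d+1}(R)\subseteq\ca(R)$, and conversely. For $(5)\Rightarrow(7)$ under $\Omega^n\fl(R)\subseteq\X$: $\Omega^{n+1}\fl(R)\subseteq\Omega\X\subseteq\Y$, so $(5)$ implies $\m^s\subseteq\ann\Ext_R(\Omega^n\fl(R),\Omega^{n+1}\fl(R))$, and Lemma~\ref{contain} forces $\Sing(\widehat R)\subseteq\V(\widehat\m^s)=\{\widehat\m\}$. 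For $(7)\Rightarrow(5)$ under $\X\subseteq\Omega^m\mod(R)$ with $m\geq d$: the equivalence just established yields $\m^s\subseteq\ca^{d+1}(R)$; for $X=\Omega^m X'\in\X$, $Y\in\mod(R)$, $i\geq 1$, the syzygy shift $\Ext^i_R(X,Y)\cong\Ext^{i+m}_R(X',Y)$ with $i+m\geq d+1$ shows $\m^s$ annihilates it, giving $(5)$ with $\Y=\mod(R)$.

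\textbf{Condition $(8)$.} $(8)\Rightarrow(1)$ is immediate from $\bigoplus_{i=t}^{d}\Omega^i k\subseteq\bigoplus_{i=0}^{d}\Omega^i k$. For $(1)\Rightarrow(8)$ the strategy is substitution: the short exact sequences $0\to\Omega^{i+1}k\to R^{b_i}\to\Omega^i k\to 0$ for $0\leq i<t$ allow each low-index summand $\Omega^i k$ appearing in an extension presentation of $X\in\X$ to be replaced iteratively, after at most $t-i$ steps, by $\Omega^t k$ together with free middle terms; when $t<d$ the same sequence with $i=t$ shows $R\in|\bigoplus_{j=t}^{d}\Omega^j k|_2$, so the accumulated free summands are absorbed. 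The case $t=d$, where $R$ is Cohen--Macaulay and $\X$ consists of maximal Cohen--Macaulay modules, reduces to $\X\subseteq|\Omega^d k|_r$, which follows by a Dao--Takahashi-style argument using that $\widehat R$ has isolated singularity (condition $(7)$, already established). The secondary obstacle, after $(5)\Rightarrow(1)$ in the cycle, is ensuring the substitution is carried out uniformly over $\X$ so that the new radius $r'$ is bounded by a fixed function of $r$, $t$, and $d$.
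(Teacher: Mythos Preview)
Your overall architecture matches the paper's, and the easy implications together with both bridges are essentially correct. Two steps have genuine gaps.

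For $(5)\Rightarrow(1)$, the sketch (factorization through free modules, filtration of $R/\m^s$, ``iterated syzygy decompositions'') does not pin down a working mechanism, and the remark that ``truncation at $d$ is natural since deeper syzygies add nothing past the Krull dimension'' is not the correct reason. The paper's argument (following \cite{BHST}) chooses a system of parameters $\x=x_1,\dots,x_d\subseteq\m^s$, so that each $x_j$ kills $\Ext^1_R(M,\Omega M)$ for $M\in\X$; then \cite[Corollary~3.2(2)]{Takahashi:2014} supplies short exact sequences $0\to H_i\to E_i\to\Omega E_{i-1}\to 0$ with $H_i$ the $i$-th Koszul homology of $M$ on $\x$, $E_0=H_0$, and $M$ a direct summand of $E_d$. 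This places $M\in|\bigoplus_{i=0}^d\Omega^iH_{d-i}|_{d+1}$; since $R/(\x)$ is Artinian of Loewy length $l$, each $H_i\in|k|_l$, whence $M\in|\bigoplus_{i=0}^d\Omega^ik|_{l(d+1)}$. The index $d$ is the length of the Koszul complex on a system of parameters, not a general truncation phenomenon, and the relevant finite-length ring is $R/(\x)$, not $R/\m^s$.

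For $(1)\Rightarrow(8)$, the substitution strategy fails outright. The sequence $0\to\Omega^{i+1}k\to R^{b_i}\to\Omega^i k\to 0$ exhibits the \emph{free middle term} as an extension; it gives no way to build $\Omega^i k$ from $\Omega^{i+1}k$ and free modules inside $|\,\cdot\,|_r$. Indeed, every module in $|\bigoplus_{j=t}^d\Omega^jk|_r$ has depth at least $t$ (the class is closed under extensions and summands), whereas $\depth\Omega^ik=i<t$ for $0\le i<t$, so $\Omega^ik$ can never lie there and no building-block replacement is possible. The paper does not attempt substitution; it reruns the Koszul argument above, using the hypothesis $\X\subseteq\Omega^m\mod(R)$ with $m\ge d\ge t$ to get $\depth M\ge t$ for each $M\in\X$, which by \cite[Theorem~1.6.17]{BH} forces $H_i=0$ for $i>d-t$. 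The terms $\Omega^iH_{d-i}$ with $i<t$ therefore never enter the decomposition, and one lands directly in $|\bigoplus_{i=t}^d\Omega^ik|_{l(d-t+1)}$.
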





\begin{proof}
The implications $(1)\Longrightarrow(2)\Longrightarrow (3)$ and $(4)\Longrightarrow (5)$  are trivial.  The equivalence $(6)\iff (7)$ follows from \cite[Proposition 2.4 (1)]{Kimura-compact}.

$(3)\Longrightarrow (4)$. By Lemma \ref{radius finite} (2),  $\V(\ann\Ext_R(\mathcal{X}, \mod(R)))\subseteq \NF(G).$ Note that $\NF(G)\subseteq \{\m\}$ as $G\in \mod_0(R)$. Thus, $\V(\ann\Ext_R(\mathcal X,\mod(R)))\subseteq\{\m\}$. This yields that $\ann\Ext_R(\mathcal X,\mod(R))$ contains some power of the maximal ideal. The desired result follows as $\ann\Ext_R(\mathcal X,\mod(R))\subseteq \ann\Ext_R(\mathcal X,\mathcal Y)$.

$(5)\Longrightarrow (1)$. This can be proved by the same argument as in the proof of \cite[Theorem 3.1 (1)]{BHST}. Assume $\m^s\subseteq \ann\Ext_R(\mathcal X, \mathcal Y)$ for some $s>0$. Choose a system of parameter $\x=x_1,\ldots, x_d\subseteq \m^s$ of $R$. 
For each $M\in\X$, let $H_i$ be the $i$-th Koszul homology of $M$ with respect to $\x$ for each integer $i$.  Note that $\Omega M\in \mathcal Y$ as $\Omega\X\subseteq \mathcal Y$ by the assumption. Hence, $(\x)\cdot \Ext_R^1(M,\Omega M)=0$. Combining this with \cite[Lemma 2.14]{IT2016}, we get that $(\x)\cdot \Ext^i_R(M,\mod(R))=0$ for all $i\geq 1$. It follows from \cite[Corollary 3.2 (2)]{Takahashi:2014} that for each $1\le i\le d$, there is an short exact sequence 
$$0\rightarrow  H_i\rightarrow E_i\rightarrow  \Omega E_{i-1}\rightarrow  0$$ 
with $E_0=H_0$ such that $M$ is a dirct summand of $E_d$.
Thus, $M$ belongs to $|\bigoplus_{i=0}^d \Omega^i H_{d-i}|_{d+1}$.
On the other hand,  note that $R/(\x)$ is Artinian. Let $l$ denote the Loewy length of $R/(\x)$. That is, $l=\inf\{i\in \mathbb N\mid (\m/(\x))^i=0\}$. For each $0\leq i\leq d$, since $\x$ annihilates $H_i$, we have $\m^l\cdot \h_i=0$.
This implies that, for each  $0\leq i\leq d$,  $H_{d-i}$ is in $|k|_l$, and hence $\Omega^i H_{d-i}$ is in $|\Omega^ik|_l$.
Therefore, we obtain
$$M\in|\bigoplus_{i=0}^d \Omega^i H_{d-i}|_{d+1}\subseteq |\bigoplus_{i=0}^d \Omega^i k|_{l(d+1)}.$$
 This yields $(1)$. 

Assume, in addition, $\mathcal X$ contains $\Omega^n\fl(R)$ for some $n\geq 0$. Choose $\mathcal Y=\fl(R)$. Assume $(4)$ holds. This yields that $\widehat{R}\otimes_R \ann\Ext_R(\mathcal X,\mathcal Y)$ contains some powers of $\widehat \m$. By Lemma \ref{contain},  $(7)$ holds. This yields that $(4)\Longrightarrow (7)$, and hence $(5)\Longrightarrow (6)$ as $(4)\iff (5)$ and $(6)\iff (7)$.

Next, assume, in addition, $\Omega^n\fl(R)\subseteq\mathcal{X}\subseteq \Omega^m\mod(R)$ for some $n\geq 0$ and $m\geq d$.

 $(6)\Longrightarrow (5)$. Since $\X\subseteq \Omega^m\mod(R)$, we have $\ca^{m+1}(R)\subseteq \ann\Ext_R(\X,\Omega \X)$. As $d\leq m$, it follows that $\ca^{d+1}(R)\subseteq \ca^{m+1}(R)$, and hence $\ca^{d+1}(R)\subseteq \ann\Ext_R(\X,\Omega \X)$. By \cite[Theorem 1.1]{Kimura}, $$\V(\ca(R))=\V(\ca^{d+1}(R)).$$ 
Hence, the assumption in $(6)$ implies that $\ca^{d+1}(R)$ contains some power of the maximal ideal. It follows that $\ann \Ext_R(\X, \Omega \X)$ also contains some power of the maximal ideal. Setting $\mathcal{Y} = \Omega \X$ completes the proof.

It remains to show that all the conditions are equivalent to $(8)$. The implication $(8)\Longrightarrow (1)$ holds trivially. To prove $(1)\Longrightarrow (8)$, it suffices to prove $(5)\Longrightarrow (8)$ as $(1)\iff (5)$. This follows from a similar argument as in the proof of $(5)\Longrightarrow (1)$. Assume that condition $(5)$ holds.  For each $M\in\X$, let $\x$, $\h_i$, $E_i$, and $l$ be as in the proof of $(5)\Longrightarrow (1)$. Since $\X\subseteq \Omega^m\mod(R)$, we have $M\in\Omega^m\mod(R)$. Combining this with $m\geq d\geq t$, it follows from \cite[1.3.7]{BH} that $\depth(M)\geq t$. By \cite[Theorem 1.6.17]{BH}, this implies that $H_i=0$ for all $i>d-t$. It follows from the short exact sequences in the proof $(5)\Longrightarrow (1)$ that $M\in |\bigoplus_{i=t}^d \Omega^i H_{d-i}|_{d-t+1}$. Combining this with that $\Omega^i H_{d-i}$ is in $|\Omega^ik|_l$ for any $i$, we conclude that $M\in |\bigoplus_{i=t}^d \Omega^i k|_{l(d-t+1)}$. This completes the proof.
\end{proof}
\begin{rem}
(1) As mentioned in the proof of Theorem \ref{equivalent conditions}, the equivalence $(6)\iff (7)$ is due to the second author \cite[Proposition 2.4 (1)]{Kimura-compact}. 

(2) Keep the assumption as Theorem \ref{equivalent conditions} and assume $\Omega^n\fl(R)\subseteq\X\subseteq \Omega^m\mod(R)$ for some $n\geq 0$ and $m\geq d$. If any of the conditions $(1)-(8)$ holds, then, in statements $(1)$, $(2)$, and $(3)$ of Theorem \ref{equivalent conditions}, we can take $r=l(d-t+1)$, where $l=\inf\{\ell\ell_R(R/\x)\mid \x=x_1,\ldots,x_d \subseteq \ca^{m+1}(R) \text{ is a system of parameter of }R \}$; $\ell\ell_R(R/(\x))$ is the Loewy length of $R/(\x)$.

Indeed, the validity of any of the conditions $(1)$–$(8)$ implies that $\ca(R)$ contains some power of the maximal ideal. Then, by the proof of $(6) \Longrightarrow (5)$, it follows that $\ca^{m+1}(R)$ also contains some power of the maximal ideal. Since $\ca^{m+1}(R) \subseteq \ann \Ext_R(\X, \Omega \X)$, the same argument as in the proof of $(1) \Longrightarrow (8)$ shows that we can take $r = l(d - t + 1)$.
\end{rem}
Theorem \ref{equivalent conditions} is a generalization a result of Bahlekeh, Hakimian,  Salarian, and Takahashi \cite[Theorem 3.2]{BHST}. In fact, the equivalence $(3) \iff (4)\iff (5)$ below was established in loc. cit.
\begin{cor}\label{extends-BHST}
    Let $(R,\m,k)$ be a commutative Noetherian local ring with depth $t$ and Krull dimension $d$. The following are equivalent.
    \begin{enumerate}
        \item For each $n\geq d$, there exists $r\geq 1$ such that $\Omega^n\mod(R)\subseteq |\bigoplus_{i=t}^d\Omega^ik|_r$.

        \item $\Omega^n\mod(R)\subseteq |\bigoplus_{i=t}^d\Omega^ik|_r$ for some $n\geq d$ and $r\geq 1$.

         \item $\Omega^n\mod(R)\subseteq |\bigoplus_{i=0}^d\Omega^ik|_r$ for some $n\geq 0$ and $r\geq 1$.

 \item $R$ has an isolated singularity and there exists $G\in\mod(R)$ such that $\Omega^n\mod(R)\subseteq [G]_r$ for some $n\geq 0$ and $r\geq 1$.
 
         \item $\ca(R)$ contains some power of the maximal ideal. 
    \end{enumerate}
\end{cor}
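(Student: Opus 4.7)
The plan is to derive every implication directly from Theorem \ref{equivalent conditions}, applied to the subcategory $\X = \Omega^n\mod(R)$ for a judicious choice of $n$. The pivotal observation is that whenever $n \geq d$, the chain $\Omega^n\fl(R) \subseteq \Omega^n\mod(R) = \X \subseteq \Omega^n\mod(R)$ meets the admissibility hypothesis of Theorem \ref{equivalent conditions} with $m = n \geq d$, so all eight of its conditions become equivalent for such $\X$. I will use this to match conditions of the corollary with conditions $(1)$, $(3)$, $(6)$, and $(8)$ of the theorem.

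Using this, the equivalences $(1)\Longleftrightarrow(2)\Longleftrightarrow(5)$ can be handled simultaneously: condition $(5)$ is precisely condition $(6)$ of the theorem, a property of $R$ alone, while $(1)$ and $(2)$ are condition $(8)$ of the theorem applied to $\X = \Omega^n\mod(R)$ ranging over every $n \geq d$, respectively some $n \geq d$. Since $(8) \Longleftrightarrow (6)$ in the theorem and the latter is $\X$-independent, it forces $(8)$ uniformly for every $n \geq d$. For $(3) \Longleftrightarrow (5)$ the only subtlety is that $(3)$ allows $n$ as small as $0$; however, $\Omega^d\mod(R) \subseteq \Omega^n\mod(R)$ whenever $n < d$, so $(3)$ may be rewritten with $n = d$ without loss, and then $(1) \Longleftrightarrow (6)$ of Theorem \ref{equivalent conditions} applied to $\X = \Omega^d\mod(R)$ finishes the job.

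The delicate point is $(4) \Longleftrightarrow (5)$. The implication $(5)\Longrightarrow(4)$ follows from Theorem \ref{equivalent conditions} applied to $\X = \Omega^d\mod(R)$: condition $(6)$ yields condition $(3)$, producing $G \in \mod_0(R) \subseteq \mod(R)$ with $\Omega^d\mod(R) \subseteq |G|_r \subseteq [G]_r$; separately, $\m^s \subseteq \ca(R)$ forces isolated singularity of $R$, because a standard check (if $a \in \ca^N(R) \setminus \p$, then $R_\p$ has finite global dimension) gives $\Sing(R) \subseteq \V(\ca(R))$. For the converse, the obstruction is that the $G$ in $(4)$ is only assumed to lie in $\mod(R)$. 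The idea is to replace $G$ by $G' := \Omega^d G$: under isolated singularity of $R$, for any non-maximal prime $\p$ the ring $R_\p$ is regular of dimension at most $d$, hence $(\Omega^d G)_\p = \Omega^d_{R_\p}(G_\p)$ is free, so $G' \in \mod_0(R)$. Using $\Omega R = 0$ and the horseshoe lemma inductively, one verifies the stability $\Omega[H]_r \subseteq [\Omega H]_r$ for every $H \in \mod(R)$, and iterating yields $\Omega^{n+d}\mod(R) \subseteq \Omega^d[G]_r \subseteq [G']_r$. Since $n + d \geq d$, Theorem \ref{equivalent conditions} applied to $\X = \Omega^{n+d}\mod(R)$ converts this inclusion (condition $(3)$ of the theorem) into condition $(6)$, which is $(5)$. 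The main obstacle is establishing the stability $\Omega[H]_r \subseteq [\Omega H]_r$ and carrying out the syzygy-shift cleanly; everything else is bookkeeping atop Theorem \ref{equivalent conditions}.
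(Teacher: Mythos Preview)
Your proposal is correct and follows essentially the same approach as the paper: both arguments reduce everything to Theorem \ref{equivalent conditions} applied to $\X=\Omega^n\mod(R)$ for suitable $n\ge d$, and both handle the crucial implication $(4)\Rightarrow(5)$ via the same syzygy-shift trick $G\mapsto\Omega^dG$ (which lands in $\mod_0(R)$ under the isolated-singularity hypothesis) together with $\Omega^d[G]_r\subseteq[\Omega^dG]_r$. The only organizational difference is that the paper argues the cycle $(1)\Rightarrow(2)\Rightarrow(3)\Rightarrow(4)\Rightarrow(5)\Rightarrow(1)$, whereas you argue hub-and-spoke through $(5)$; and for the isolated-singularity part of $(5)\Rightarrow(4)$ you use $\Sing(R)\subseteq\V(\ca(R))$ directly, while the paper passes through condition $(7)$ of Theorem \ref{equivalent conditions} to get that $\widehat R$ (hence $R$) has an isolated singularity.

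One small wording point: your phrase ``$(3)$ may be rewritten with $n=d$ without loss'' is only literally correct when the original $n\le d$; when $n>d$ you should keep $n$ (or say: without loss $n\ge d$, replacing $n$ by $\max(n,d)$). This does not affect the validity of the argument.
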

\begin{proof}
    The implications $(1)\Longrightarrow (2)\Longrightarrow (3)$ are trivial. 

    $(3)\Longrightarrow (4)$. By Theorem \ref{equivalent conditions}, $\widehat R$ has an isolated singularity, and hence so does $R$. The second statement of (4) holds as $|\bigoplus_{i=0}^d\Omega^ik|_r\subseteq [\bigoplus_{i=0}^d\Omega^ik]_r$. 

$(4)\Longrightarrow (5)$. Assume $R$ has an isolated singularity and $\Omega^n\mod(R)\subseteq [G]_r$. It follows from this that $\Omega^{n+d}\mod(R)\subseteq [\Omega^d G]_r$. Since $R$ has an isolated singularity, $\Omega^dG\in\mod_0(R)$. Choose $\X=\Omega^{n+d}\mod(R)$. By Theorem \ref{equivalent conditions}, $\ca(R)$ is contains some power of the maximal ideal. 

   $(5)\Longrightarrow (1)$.  Let $\X = \Omega^n\mod(R)$. Since $\ca(R)$ contains some power of the maximal ideal and $n \geq d$, the implication follows from $(6)\Rightarrow (8)$ in Theorem \ref{equivalent conditions}
\end{proof}
\begin{rem}
 For a commutative Noetherian local ring $(R,\m,k)$, if there exists $n,r\geq 0$ and $G\in\mod(R)$ such that $\Omega^n(R/\m^i)\subseteq [G]_r$ for all $i\geq 0$, then $n\geq \dim(R)$; see \cite[4.6]{DLMO}. Since $|G|_r\subseteq [G]_r$, the condition of $(3)$ in Corollary \ref{extends-BHST} always yields that $n\geq \dim(R)$.
\end{rem}

\begin{lem}\label{add} Let $\X$ be a full subcategory of $\mod (R)$ that is closed under finite direct sums and direct summands. If $\X$ has finite representation type, then $ \X=\add M$ for some $M\in \X$.  The converse holds if, in addition, $R$ is semilocal.
\end{lem}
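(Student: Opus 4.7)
The plan is to prove the two implications separately. For the forward direction, I would let $M_1, \ldots, M_n$ be representatives of the finitely many isomorphism classes of indecomposable modules in $\X$ and set $M \colonequals M_1 \oplus \cdots \oplus M_n$, which lies in $\X$ by closure under finite direct sums. Given any $X \in \X$, since $X$ is finitely generated over the noetherian ring $R$, it admits a decomposition $X = X_1 \oplus \cdots \oplus X_k$ into indecomposable summands (the poset of direct summands of a finitely generated module over a noetherian ring satisfies the descending chain condition). Closure of $\X$ under direct summands forces each $X_i \in \X$, hence $X_i \cong M_{j_i}$ for some $j_i$, so $X$ is a direct summand of $M^k$ and therefore lies in $\add M$. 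The reverse inclusion $\add M \subseteq \X$ is immediate from the hypotheses on $\X$.

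For the converse, with $R$ semilocal and $\X = \add M$, I would show that $\X$ contains at most finitely many non-isomorphic indecomposables. The key observation is that $\End_R(M)$ is a module-finite $R$-algebra, and a module-finite algebra over a commutative semilocal noetherian ring is again semilocal. Modules with semilocal endomorphism ring enjoy a Krull--Schmidt-type property due to Facchini: if $M \cong N_1^{a_1} \oplus \cdots \oplus N_r^{a_r}$ is any decomposition into pairwise non-isomorphic indecomposables, then every indecomposable direct summand of any $M^n$ is isomorphic to one of $N_1, \ldots, N_r$. Since every object of $\add M$ is a direct summand of some $M^n$, it follows that $\X$ has at most $r$ indecomposables up to isomorphism.

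The main obstacle is the Krull--Schmidt-type step in the converse, which rests on the semilocality of $\End_R(M)$. Without the hypothesis that $R$ is semilocal, new indecomposable summands can appear in $M^n$ that are not summands of $M$ itself---classically, $I \oplus J \cong R \oplus IJ$ for fractional ideals over a Dedekind domain with nontrivial class group, which can produce infinitely many non-isomorphic indecomposables in $\add M$ even when $M$ is built from only a few ideals. The semilocal hypothesis on $R$ is precisely what rules out this pathology.
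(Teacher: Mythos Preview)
Your forward direction is correct and essentially identical to the paper's argument.

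For the converse, the paper simply invokes \cite[Theorem 1.1]{Wiegand}, which states directly that over a commutative Noetherian semilocal ring, $\add M$ contains only finitely many indecomposables up to isomorphism. Your route through the semilocality of $\End_R(M)$ and Facchini's theory is a legitimate alternative path to that same conclusion, but the intermediate claim you attribute to Facchini is too strong. Semilocality of $\End_R(M)$ does \emph{not} force every indecomposable summand of $M^n$ to be isomorphic to one of the $N_i$ in a fixed decomposition of $M$; that would be full Krull--Schmidt, which can fail over non-Henselian Noetherian local rings. So the pathology you illustrate with Dedekind domains does not disappear entirely under the semilocal hypothesis---it is only tamed. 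What Facchini actually proves is that the monoid $V(\add M)$ of isomorphism classes under $\oplus$ is a Krull monoid, embedding via a divisor homomorphism into $\mathbb{N}^s$ with $s$ the number of simple $\End_R(M)/J(\End_R(M))$-modules; a submonoid of $\mathbb{N}^s$ arising this way has only finitely many atoms. That weaker statement is exactly what is needed, so your argument survives once the overclaim is replaced; it then amounts to an unpacking of the mechanism behind Wiegand's theorem rather than a genuinely different proof.
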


\begin{proof} Assume $\X$ has finitely many indecomposable modules. Let $M_1, M_2, \ldots, M_n $ be all representatives of isomorphism classes of indecomposable modules belonging to $\X$. Let $M:=M_1\oplus \cdots \oplus M_n$.  Then $M\in \X$ and $M_i\in \add M \subseteq  \X$. Let $X\in  \X$, and let $X=X_1\oplus \cdots \oplus X_r$ be an indecomposable decomposition of $X$.  Since $ \X$ is closed under direct summands, for each $1\leq i\leq r$, there exists $1\leq j\leq n$ such that $X_i\cong M_j$, and hence $X\in \add M$.  This implies that $\X=\add M$.

Assume $\X=\add M$ for some $M\in\X$ and $R$ is semilocal. By \cite[Theorem 1.1]{Wiegand}, $\X$ has finitely many indecomposable modules. 
\end{proof}  



\begin{cor}\label{syzygy finite rep}
Let $(R,\m,k)$ be a commutative Noetherian local ring with depth $t$ and Krull dimension $d$. The following are equivalent.
\begin{enumerate}

 \item For each $n\geq d$, there exists $r\geq 1$ such that $\Omega^n\fl(R)\subseteq |\bigoplus_{i=t}^d\Omega^ik|_r$.

        \item $\Omega^n\fl(R)\subseteq |\bigoplus_{i=t}^d\Omega^ik|_r$ for some $n\geq d$ and $r\geq 1$.

         \item $\Omega^n\fl(R)\subseteq |\bigoplus_{i=0}^d\Omega^ik|_r$ for some $n\geq 0$ and $r\geq 1$.
         
    \item There exists $G\in \mod_0(R)$ such that $\Omega^n\fl(R)\subseteq [G]_r$ for some $n\geq 0$ and $r\geq 1$.

    \item $\ca(R)$ contains some power of the maximal ideal.

    \item $\widehat{R}$ has an isolated singularity.
\end{enumerate}
In particular, if $\add\Omega^n\fl(R)$ has finite representation type for some $n\geq 0$, then $\widehat{R}$ (and hence $R$) has an isolated singularity.
\end{cor}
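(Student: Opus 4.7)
The plan is to derive Corollary~\ref{syzygy finite rep} by applying Theorem~\ref{equivalent conditions} with $\X = \Omega^n\fl(R)$. The implications $(1)\Rightarrow(2)\Rightarrow(3)$ are immediate, since a larger set of generators yields a larger subcategory. For $(3)\Rightarrow(4)$ I would take $G = \bigoplus_{i=0}^{d}\Omega^i k$; the residue field $k$ lies in $\mod_0(R)$ (as $k_\p=0$ for every non-maximal $\p$), the class $\mod_0(R)$ is closed under syzygies and finite direct sums, and $|G|_r\subseteq [G]_r$ by induction on $r$.

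For $(4)\Rightarrow(5)$, I would apply Theorem~\ref{equivalent conditions} to $\X = \Omega^n\fl(R)$: condition $(4)$ of the corollary is precisely condition $(3)$ of the theorem for this $\X$, and since $\X$ trivially contains $\Omega^n\fl(R)$, the implication $(5)\Rightarrow(6)$ of the theorem is available and delivers that $\ca(R)$ contains a power of $\m$. The equivalence $(5)\iff(6)$ of the corollary is exactly $(6)\iff(7)$ of the theorem. For $(6)\Rightarrow(1)$, fix any $n\ge d$ and set $\X = \Omega^n\fl(R)$; then $\Omega^n\fl(R)\subseteq \X \subseteq \Omega^n\mod(R)$ with $n\ge d$, so all conditions of Theorem~\ref{equivalent conditions} are equivalent, and $(7)\Rightarrow(8)$ yields $\Omega^n\fl(R)\subseteq |\bigoplus_{i=t}^{d}\Omega^i k|_r$ for some $r\ge 1$.

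For the ``in particular'' clause, suppose $\add\Omega^n\fl(R)$ has finite representation type for some $n\ge 0$. By Lemma~\ref{add} there exists $M\in\mod(R)$ with $\add\Omega^n\fl(R) = \add M$. I would then verify $M\in\mod_0(R)$: $M$ is a direct summand of some $\bigoplus_j \Omega^n L_j$ with $L_j\in\fl(R)$, and for any non-maximal prime $\p$ we have $(L_j)_\p = 0$, so each $(\Omega^n L_j)_\p$ is free over $R_\p$ and hence so is $M_\p$. Therefore $\Omega^n\fl(R)\subseteq \add M\subseteq [M]_1$ with $M\in\mod_0(R)$, which is condition $(4)$ with $r=1$. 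By the established equivalences, $\widehat R$ has an isolated singularity, and faithful flatness of $R\to\widehat R$ descends this to $R$.

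The main subtlety I anticipate is matching the quantifier ``for some $n\ge 0$'' appearing in conditions $(3)$ and $(4)$ of the corollary against the hypothesis $m\ge d$ required for the full equivalence in Theorem~\ref{equivalent conditions}. This is resolved by observing that the forward direction $(4)\Rightarrow(5)$ only needs the lower bound $\X\supseteq \Omega^n\fl(R)$, which is automatic for any $n$, while for the reverse direction $(6)\Rightarrow(1)$ one is free to pick $n\ge d$, and then the upper bound $\X\subseteq \Omega^n\mod(R)$ holds for free.
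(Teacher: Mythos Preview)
Your proposal is correct and follows essentially the same route as the paper: both deduce the cycle of implications by invoking Theorem~\ref{equivalent conditions} with $\X=\Omega^n\fl(R)$, and both handle the ``in particular'' clause via Lemma~\ref{add} to produce a single $M$ with $\Omega^n\fl(R)\subseteq[M]_1$. Your write-up is more explicit than the paper's (which simply refers back to the argument of Corollary~\ref{extends-BHST}), in particular you spell out why $\bigoplus_{i=0}^d\Omega^i k\in\mod_0(R)$ and why the $M$ coming from Lemma~\ref{add} lies in $\mod_0(R)$, but the underlying logic is the same.
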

\begin{proof}
    The equivalence $(5)\iff (6)$ follows from Theorem \ref{equivalent conditions}. Combining with Theorem \ref{equivalent conditions} again, the proof of the equivalences $(1)-(5)$ follow from the same argument in the proof of Corollary \ref{extends-BHST}.
    
    If $\add\Omega^n\fl(R)$ has finite representation type for some $n\geq 0$, then Lemma \ref{add} implies that $\add\Omega^n\fl(R)=\add M$ for some $M\in \add\Omega^n\fl(R)$. Hence, $\Omega^n\fl(R)\subseteq \add M=|M|_1\subseteq [M]_1$. The above equivalences now show that $\widehat{R}$ has an isolated singularity.
\end{proof}

\begin{chunk}\label{CM_0(R)=add}
Let $R$ be a Cohen--Macaulay local ring of dimension $d$.  It follows from \cite[Claim 1 in proof of Theorem 2.4]{Takahashi:2010} that 
 $$\CM_0(R)=\add\Omega^d\fl(R);$$
 see also \cite[Proposition 2.3]{MatT}.
\end{chunk}
 As mentioned before Corollary \ref{extends-BHST}, Theorem \ref{equivalent conditions} is generalizes a result of \cite[Theorem 3.2]{BHST}, it also extends a result of Dao and Takahashi \cite[Theorem 1.1 (1)]{DT2015}. Specifically,  it was proved in loc. cit. that the equivalence $(1)\iff (2)$ below holds, and all the conditions listed below are equivalent if $R$ is an equicharacteristic complete Cohen--Macaulay local ring with a perfect residue field.
 
\begin{cor}\label{extends-DT}
    Let $R$ be a Cohen--Macaulay local ring. The following are equivalent.
    \begin{enumerate}
    \item $\CM_0(R)=[G]_r$ for some $G\in\CM_0(R)$ and $r\geq 1$.
    
    \item $\ann\Ext(\CM_0(R),\CM_0(R))$ contains some power of the maximal ideal.

    \item $\widehat R$ has an isolated singularity. 
\end{enumerate}
\end{cor}

\begin{proof}
    Let $\X=\Omega^d\fl(R)$, where $d=\dim(R)$.  By \ref{CM_0(R)=add}, $\add \X=\CM_0(R)$.  Combining this with the implication  $(3)\Longrightarrow (4)$  in Theorem \ref{equivalent conditions}, we obtain $(1)\Longrightarrow (2)$. The implication $(2)\Longrightarrow (3)$ was proved in the proof of \cite[Proposition 4.9 (2)]{Dey-Takahashi2022}; this also follows from the implication $(5)\Longrightarrow (7)$ in Theorem \ref{equivalent conditions}.
    
$(3)\Longrightarrow (1)$. By the implication $(7)\Longrightarrow (8)$ in Theorem \ref{equivalent conditions}, we have $\X\subseteq |\Omega^d k|_r$ for some $r\geq 1$. Since $\Omega^dk\in\CM_0(R)$, it follows that $|\Omega^dk|_r\subseteq \CM_0(R)$. As $|\Omega^dk|_r$ is closed under direct summands, we obtain $$\CM_0(R)=\add\X\subseteq |\Omega^dk|_r,$$ and hence $\CM_0(R)=|\Omega^d k|_r$. On the other hand, $[\Omega^dk]_r\subseteq \CM_0(R)$ as $\Omega^dk\in\CM_0(R)$. Combining this with $\CM_0(R)=|\Omega^dk|_r\subseteq [\Omega^dk]_r$, we have $\CM_0(R)=[\Omega^dk]_r$. This completes the proof.
\end{proof}

\begin{rem}\label{extends-DLT}
Let $R$ be a local ring with depth $t$ and Krull dimension $d$. Assume that $\widehat R$ has an isolated singularity. By Theorem \ref{equivalent conditions} and Corollary \ref{extends-BHST},  $\Omega^n\mod(R)\subseteq |\bigoplus_{i=t}^d\Omega^ik|_r$ for some $n\geq d$ and $r\geq 1$. In particular, $\mod(R)$ has a strong generator in the sense of \cite[Section 4]{IT2016}. This is due to the first author, Lank, and Takahashi \cite[Remark 3.13]{DLT}.
Assume, in addition, that $R$ is Cohen--Macaulay, it is also proved in loc. cit. that $\CM(R)=|G|_r$ for some $G\in\mod(R)$ and $r\geq 1$. Indeed, one can take $G=\Omega^dk$; see the proof of Corollary \ref{extends-DT}. 
\end{rem}

\begin{chunk}\label{syzygy-res-closed-summands}
    Let $\X$ be a resolving subcategory of $\mod(R)$; see \ref{def of res}. By \cite[Lemma 3.1]{Dey-Takahashi}, $\Omega\X$ is closed under direct summands, and hence $\Omega\X=\add\Omega\X$. For example, $\Omega\mod(R)$ is closed under direct summands. If $R$ is Cohen--Macaulay, since $\CM(R)$ and $\CM_0(R)$ are resolving, both $\Omega\CM(R)$ and $\Omega\CM_0(R)$ are closed under direct summands. 
\end{chunk}

Next, we present several examples of local rings with finite syzygy representation type.
\begin{ex}
Let $k$ be a field.

(1) Let $R=k\llbracket  x,y\rrbracket/(x^m,xy,y^n)$, where $m,n\ge 2$.
   Then $R$ is an Artinian local ring.
   In particular, $\fl(R)=\mod(R)$ and $R$ has an isolated singularity.
   Since $R$ is not a principal ideal ring, $\mod(R)$ does not have finite representation type; see \cite[Theorem 3.3]{LW} for instance.
   On the other hand, $(x,y)=(x)\oplus(y)$ in $R$.
   By \cite[Proposition 2.5]{Takahashi-uni-dom}, for any $M\in\mod(R)$, there is an isomorphism $$\Omega_R^2M\cong\Omega_A^2(M/yM)\oplus\Omega_B^2(M/xM)\oplus (y)^{\oplus a}\oplus (x)^{\oplus b}$$ for some integers $a,b\ge 0$, where $A=R/(y)$ and $B=R/(x)$.
   Hence, $\add\Omega^2\mod(R)$ has finite representation type as $A$ and $B$ are Artinian principal ideal rings; see \cite[Theorem 3.3]{LW} again.
   When $m=n=2$, $\Omega\mod(R)$ has finite representation type because $\Omega\mod(R)$ belongs to $\mod(k)$.

(2) Consider the local ring $R=k\llbracket x,y\rrbracket/(x^2,xy)$.
   This is a typical non-Cohen--Macaulay local ring of depth 0 and dimension 1.
   Take an indecomposable torsionless $R$-module $M$ which is not free.
   Then, since $M$ is a submodule of a module of the form $\m R^a$ for $a\ge1$, it is killed by the element $x\in R$.
   The quotient ring $R/(x)\cong k\llbracket y\rrbracket$ is a principal ideal domain. Hence, by the structure theorem for finitely generated modules over a principal ideal domain,  $M$ is isomorphic to either  $R/(x)$ or  $R/(x,y^n)$ for some $n\geq 1$.
   But the $R$-module $R/(x,y^n)$ is not torsionless when $n\geq 2$.
   Indeed, there is an isomorphism $\Hom_R(R/(x,y^n),R)\cong \ann(x,y^n);~f\mapsto f(\overline{1})$. This implies that
   $$\Hom_R(R/(x,y^n),R)\cong (x);~f\mapsto f(\overline{1})$$ for $n\geq 2$. It follows that there is no injective map from $R/(x,y^n)$ to a free $R$-module if $n\geq 2$.  Thus, $M$ is isomorphic to either $R/(x)$ or $R/(x,y)$. Consequently, $\Omega\mod(R)$ has finite representation type.
\end{ex}

It is known that a Cohen--Macaulay local ring $R$ with finite Cohen--Macaulay type has an isolated singularity; this is due to Auslander when $R$ is complete, due to Leuschke and Wiegand when $R$ is excellent, and due to Huneke and Leuschke for the general case. Recently, Dao and Takahashi \cite[Corollary 1.2]{DT2015} improved this famous result to the following: for a Cohen--Macaulay local ring $R$, if  $\CM_0(R)$ has finite representation type, then $R$ has an isolated singularity.

The following result improves the above result of Dao and Takahashi. 

\begin{cor}\label{application}
Let $R$ be a Cohen--Macaulay local ring. Then:
\begin{enumerate}
    \item If $\OCM_0(R)$ has finite representation type, then $\widehat{R}$ has an isolated singularity, and hence $\OCM(R)$ has finite representation type.

    \item If $\add\Omega^n\CM_0(R)$ has finite representation type for some $n\geq 0$, then $\widehat{R}$ has an isolated singularity, and hence $\add\Omega^n\CM(R)$ has finite representation type.
\end{enumerate}
\end{cor}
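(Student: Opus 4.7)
The plan is as follows. Part (1) is the case $n = 1$ of part (2), since $\OCM_0(R) = \Omega\CM_0(R)$ in the notation of the paper, so I focus on (2). The strategy is to feed Theorem \ref{equivalent conditions} the subcategory $\X = \Omega^{n+d}\fl(R)$, where $d = \dim R$, and verify its condition (3) using a single generator extracted from the finite-representation-type assumption.

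By Lemma \ref{add}, write $\add\Omega^n\CM_0(R) = \add M$ for some $M$. Since every object of $\CM_0(R)$ is locally free on the punctured spectrum, and this property is inherited by syzygies and direct summands, we have $M \in \mod_0(R)$. Using the identification $\CM_0(R) = \add\Omega^d\fl(R)$ from \ref{CM_0(R)=add},
\[
\Omega^{n+d}\fl(R) \subseteq \Omega^n\CM_0(R) \subseteq \add\Omega^n\CM_0(R) = \add M = |M|_1 \subseteq [M]_1.
\]
The subcategory $\X = \Omega^{n+d}\fl(R)$ satisfies $\Omega^{n+d}\fl(R) \subseteq \X \subseteq \Omega^{n+d}\mod(R)$ with $n+d \geq d$, so the hypotheses of Theorem \ref{equivalent conditions} are met and its conditions (1)--(8) are all equivalent. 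The display above verifies condition (3) with $G = M$, and so condition (7) delivers that $\widehat R$ has an isolated singularity.

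For the final clause, I would invoke the standard descent of regularity along a faithfully flat local map $R_\p \to \widehat{R}_\q$, with $\q$ minimal over $\p\widehat R$ for a non-maximal $\p \in \Spec R$, to transfer the isolated singularity from $\widehat R$ to $R$ itself. A short Auslander--Buchsbaum argument then yields $\CM(R) = \CM_0(R)$, and hence $\add\Omega^n\CM(R) = \add\Omega^n\CM_0(R)$ inherits finite representation type from the hypothesis. All of the substantive content is packaged inside Theorem \ref{equivalent conditions}; the only step outside of it that requires any thought is the observation $M \in \mod_0(R)$, which is precisely where the ``locally free on the punctured spectrum'' part of the hypothesis on $\CM_0(R)$ is genuinely used, and I do not anticipate any serious obstacle.
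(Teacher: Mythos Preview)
Your proposal is correct and takes essentially the same route as the paper, which cites Corollary \ref{syzygy finite rep} (whose proof you have effectively inlined) rather than invoking Theorem \ref{equivalent conditions} directly. One small omission: your reduction of (1) to the case $n=1$ of (2) tacitly uses that $\Omega\CM_0(R)$ is already closed under direct summands, which the paper makes explicit via \ref{syzygy-res-closed-summands}.
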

\begin{proof}
Let $d$ denote the Krull dimension of $R$. By \ref{CM_0(R)=add}, $\CM_0(R)=\add\Omega^d\fl(R)$. This yields that $\add\Omega^n\CM_0(R)=\add\Omega^{n+d}\fl(R)$. If $\add\Omega^n\CM_0(R)$ has finite representation type, it follows from that Corollary \ref{syzygy finite rep} that $\widehat{R}$ has an isolated singularity. Consequently, $\CM(R)=\CM_0(R)$. This proves (2).

For (1),  by \ref{syzygy-res-closed-summands},
$
\OCM_0(R)=\add\OCM_0(R).
$
The desired result now follows from (2).
\end{proof}
\begin{rem}
(1) For a Cohen--Macaulay local ring $R$,  Corollary \ref{application} implies that $\widehat{R}$ has an isolated singularity if $R$ has finite Cohen--Macaulay type. This result was recently proved by the second author \cite[Remark 2.10]{Kimura-compact}.

  (2)   Let $R=k\llbracket x,y\rrbracket/(x,y)^2$, where $k$ is a field. In this example $\CM(R)=\CM_0(R)$. Note that $\OCM(R)$ consists of only one indecomposable module $k$. Thus, it satisfies the assumption of Corollary \ref{application}. It is known that $R$ has infinite Cohen--Macaulay type. 

     This example provides a Cohen--Macaulay local $R$ such that $\OCM_0(R)$ has a finite representation type, while $\CM_0(R)$ does not. In such examples, we can apply Corollary \ref{application} to get that $\widehat R$ has an isolated singularity, and in particular $R$ has an isolated singularity. However, such examples do not satisfy the assumption of Dao and Takahashi's result mentioned before Corollary \ref{application}.
\end{rem}

\begin{chunk}
   Let $\widetilde{\S}_n(R)$ (resp. $\S_n(R)$) denote the subcategory of $\mod(R)$ consisting of modules $M$ that satisfy $(\widetilde{\S}_n)$ (resp. $(\S_n)$), i.e., $\depth(M_{\p})\geq \min \{n,\depth(R_{\p})\}$ (resp. $\depth (M_{\p})\ge \min \{n,\dim (R_{\p})\}$) for all $\p \in \spec(R)$. 
   
   We also put $\widetilde{\S}_n^0(R):=\widetilde{\S}_n(R) \cap \mod_0(R)$ and $\S_n^0(R):=\S_n(R) \cap \mod_0(R)$ . 
   Note that $\widetilde{\S}_n(R)$ is always a resolving subcategory of $\mod(R)$ containing $\Omega^n\mod(R)$, and $\widetilde{\S}_n^0(R)$ is a resolving subcategory containing $\Omega^n\mod_0(R)$, and if $R$ satisfies $(\S_n)$, then $\widetilde{\S}_n(R)=\S_n(R)$.     
\end{chunk}
\begin{ex}\label{exa}
    Let $R$ be a Cohen--Macaulay local ring with Krull dimension $d$. For each $n\geq d$,
    $$
\widetilde{\S}_n(R)=\S_n(R)=\CM(R)\text{ and }\widetilde{\S}_n^0(R)=\S_n^0(R)=\CM_0(R)
$$
\end{ex}
\begin{prop}\label{equivalent conditions-Serre}
    Let $R$ be a commutative Noetherian local ring and $m,n\geq 0$. Consider the following conditions:
\begin{enumerate}
    \item  There exists $G\in\mod_0(R)$  such that $\Omega^m\widetilde{\S}_n^0(R)\subseteq [G]_r$ for some $r\geq 1$.

 
\item  $\widehat R$ has an isolated singularity.

\end{enumerate}
Then $(1)\Longrightarrow (2)$. The implication $(2)\Longrightarrow(1)$ holds if, in addition, $m\geq \dim(R)$.
\end{prop}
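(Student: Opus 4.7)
The plan is to apply Theorem \ref{equivalent conditions} to the subcategory $\mathcal{X}:=\Omega^m\widetilde{\S}_n^0(R)$. The preliminary step is to verify the sandwich
$$
\Omega^{m+n}\fl(R)\;\subseteq\;\Omega^m\widetilde{\S}_n^0(R)\;\subseteq\;\Omega^m\mod(R).
$$
The right-hand inclusion is immediate from the definition of $\mathcal{X}$. For the left-hand one, I would use that $\widetilde{\S}_n^0(R)$ contains $\Omega^n\mod_0(R)$ (as noted in the paragraph preceding Example \ref{exa}), combined with the trivial inclusion $\fl(R)\subseteq\mod_0(R)$; applying $\Omega^m$ then yields $\Omega^{m+n}\fl(R)\subseteq\mathcal{X}$. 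In particular, $\mathcal{X}$ contains $\Omega^{m+n}\fl(R)$.

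For $(1)\Longrightarrow(2)$, the hypothesis is precisely condition $(3)$ of Theorem \ref{equivalent conditions} applied to this $\mathcal{X}$. Since $\mathcal{X}$ contains $\Omega^{m+n}\fl(R)$, the chain of implications $(3)\Rightarrow(4)\iff(5)\Rightarrow(6)\iff(7)$ from that theorem applies and delivers $(7)$: $\widehat{R}$ has an isolated singularity.

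For $(2)\Longrightarrow(1)$ under the additional assumption $m\geq\dim R$, the sandwich above now matches exactly the extra hypothesis of Theorem \ref{equivalent conditions} under which all eight conditions become equivalent (with the theorem's ``$n$'' taken to be $m+n$ and its ``$m$'' taken to be our $m\geq\dim R$). Hence condition $(7)$ forces condition $(3)$, which is precisely $(1)$, with the generator $G$ furnished by the theorem.

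The proof is essentially a bookkeeping exercise: the only subtle point is that the letter $m$ plays different roles in the proposition and in Theorem \ref{equivalent conditions}, so one must be careful to match the indices correctly when citing the theorem. Beyond that identification, no substantive obstacle arises.
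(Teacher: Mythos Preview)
Your proof is correct and follows essentially the same approach as the paper: establish the sandwich inclusion for $\mathcal{X}=\Omega^m\widetilde{\S}_n^0(R)$ and then invoke Theorem \ref{equivalent conditions}. The only cosmetic difference is that the paper writes the left inclusion as $\Omega^{m+n}\mod_0(R)\subseteq\mathcal{X}$ (using $\Omega^n\mod_0(R)\subseteq\widetilde{\S}_n^0(R)$ directly), whereas you pass immediately to the weaker $\Omega^{m+n}\fl(R)\subseteq\mathcal{X}$, which is exactly what Theorem \ref{equivalent conditions} requires; both are equally valid.
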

\begin{proof}
Since $\Omega^n\mod_0(R)\subseteq \widetilde{\S}_n^0(R)$, we have the first inclusion below:
 $$
 \Omega^{m+n}\mod_0(R)\subseteq \Omega^m\widetilde{\S}_n^0(R)\subseteq   \Omega^m\mod(R).
  $$
The desired result now follows from Theorem \ref{equivalent conditions}.
\end{proof}
In view of Example \ref{exa}, the following result is a generalization of Corollary \ref{application} (1).
\begin{thm}\label{main} Let $R$ be a commutative Noetherian local ring. 
If $\Omega\widetilde{\S}_n^0(R)$ has finite representation type for some $n\geq 0$, then $\widehat R$ has an isolated singularity.
\end{thm}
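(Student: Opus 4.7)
The plan is to reduce the statement to Proposition \ref{equivalent conditions-Serre} with $m=1$ by exhibiting a single module $M\in\mod_0(R)$ such that $\Omega\widetilde{\S}_n^0(R)\subseteq [M]_1$.

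First, I would observe that $\widetilde{\S}_n^0(R)$ is a resolving subcategory of $\mod(R)$, so by \ref{syzygy-res-closed-summands} the class $\Omega\widetilde{\S}_n^0(R)$ is closed under direct summands; it is of course also closed under finite direct sums. Since $R$ is local, hence semilocal, Lemma \ref{add} applied to $\Omega\widetilde{\S}_n^0(R)$—which by hypothesis has finite representation type—produces a module $M\in\Omega\widetilde{\S}_n^0(R)$ with $\Omega\widetilde{\S}_n^0(R)=\add M$.

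Next, I would verify that $M\in\mod_0(R)$. For any $N\in\widetilde{\S}_n^0(R)$ and any non-maximal prime $\p$, the localization $N_\p$ is free, so the localization at $\p$ of any free resolution of $N$ splits, forcing $(\Omega N)_\p$ to be a free $R_\p$-module. Therefore $\Omega\widetilde{\S}_n^0(R)\subseteq\mod_0(R)$, and in particular $M\in\mod_0(R)$. The chain $\Omega\widetilde{\S}_n^0(R)=\add M\subseteq [M]_1$ now matches exactly condition (1) of Proposition \ref{equivalent conditions-Serre} with parameters $m=1$, $G=M$, and $r=1$, and the implication $(1)\Longrightarrow (2)$ of that proposition yields that $\widehat R$ has an isolated singularity.

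The main step that needs care—really the only obstacle—is to confirm that the forward implication of Proposition \ref{equivalent conditions-Serre} is available for $m=1$, even when $1<\dim R$. The key point, visible in the proof of that proposition, is that the hypothesis ``$\mathcal X$ contains $\Omega^n\fl(R)$'' required for the implication $(5)\Longrightarrow (6)$ of Theorem \ref{equivalent conditions} is automatic here: since $\fl(R)\subseteq\mod_0(R)$ and $\Omega^n\mod_0(R)\subseteq\widetilde{\S}_n^0(R)$, one has $\Omega^{n+1}\fl(R)\subseteq\Omega\widetilde{\S}_n^0(R)$. Once this observation is in hand, no further calculation is needed.
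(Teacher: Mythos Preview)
Your argument is correct and follows essentially the same route as the paper: show $\widetilde{\S}_n^0(R)$ is resolving, invoke \ref{syzygy-res-closed-summands} and Lemma \ref{add} to get $\Omega\widetilde{\S}_n^0(R)=\add(G)$ with $G\in\mod_0(R)$, then apply Proposition \ref{equivalent conditions-Serre}. Your final paragraph is unnecessary, since the implication $(1)\Longrightarrow(2)$ of Proposition \ref{equivalent conditions-Serre} is stated without any restriction on $m$; there is no need to revisit its proof.
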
  
\begin{proof}
  Since both $\widetilde{\S}_n(R)$ and $\mod_0(R)$ are resolving subcategories, their intersection $\widetilde{\S}_n^0(R)$ is also resolving.  By \ref{syzygy-res-closed-summands}, $\Omega\widetilde{\S}_n^0(R)$ is closed under direct summands. As this category has finite representation type, Lemma \ref{add} implies that  $\Omega\widetilde{\S}_n^0(R)=\add(G)$ for some $G\in \Omega\widetilde{\S}_n^0(R)$. Noting that $\add(G)\subseteq [G]_1$ and $G\in \mod_0(R)$, we can apply Proposition \ref{equivalent conditions-Serre} to conclude that $\widehat R$ has an isolated singularity.
\end{proof}

\section{Schreyer-type questions}\label{Section-ascent-descent}


The main result in this section is Theorem \ref{T2} from the introduction. We also obtain a syzygy version of a result of Kobayashi, Lyle, and Takahashi \cite[Theorem 4.4]{KLT}; see Proposition \ref{generalization-KLT}. In the following, we will study the relationships between certain subcategories of $\mod(R)$ and subcategories of $\mod(\widehat{R})$ by comparing $\add\Omega^n \fl(R)$ and $\add\Omega^n \fl(\widehat{R})$.
We prepare a basic lemma for this purpose.

\begin{lem}\label{count. iff}
Let $R$ be a commutative Noetherian local ring and $\X$ be a subcategory of $\mod(R)$.
The following are equivalent.
\begin{enumerate}
 \item $\add\X$ has countable representation type.
 \item There are only countably many isomorphism classes of modules belonging to $\add\X$.
 \item There are only countably many isomorphism classes of modules belonging to $\X$.
\end{enumerate}
\end{lem}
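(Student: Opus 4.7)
The plan is to deduce the lemma from the Krull--Schmidt property of $\mod(R)$ (which holds because $R$ is a commutative Noetherian local ring, so every finitely generated $R$-module has a unique, up to isomorphism and reordering, decomposition into indecomposables with local endomorphism rings). I would organise the argument as the cycle $(1)\Rightarrow (2)\Rightarrow (3)\Rightarrow (1)$.

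For $(1)\Rightarrow (2)$: let $\{M_i\}_{i\in I}$ be a set of representatives of the isomorphism classes of indecomposables in $\add\X$, with $I$ countable by assumption. By Krull--Schmidt, every $M\in\add\X$ is isomorphic to a finite direct sum $M_{i_1}^{\oplus a_1}\oplus\cdots\oplus M_{i_k}^{\oplus a_k}$, and the multiset $\{(i_j,a_j)\}$ is uniquely determined. Hence the isomorphism classes of objects of $\add\X$ inject into the set of finitely supported functions $I\to\mathbb{N}$, which is countable.

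For $(2)\Rightarrow (3)$: trivial, since $\X\subseteq\add\X$.

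For $(3)\Rightarrow (1)$: given countable representatives $\{X_n\}_{n\in\mathbb{N}}$ of the isomorphism classes in $\X$, consider the countable collection of finite direct sums $X_{n_1}\oplus\cdots\oplus X_{n_k}$. By Krull--Schmidt each such sum has only finitely many indecomposable direct summands up to isomorphism, and every indecomposable of $\add\X$ arises (again by Krull--Schmidt uniqueness) as a direct summand of some such finite sum. Therefore the indecomposables of $\add\X$ are a countable union of finite sets, hence countable. No step presents a genuine obstacle; the only thing to be a little careful about is invoking Krull--Schmidt correctly to ensure that decompositions into indecomposables are unique and that summands of finite sums from $\X$ are finite in number up to isomorphism.
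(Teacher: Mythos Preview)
Your overall strategy matches the paper's (the same cycle of implications, the same counting ideas), but there is a genuine gap in the justification: the Krull--Schmidt property does \emph{not} hold in $\mod(R)$ for an arbitrary commutative Noetherian local ring $R$. It holds when $R$ is Henselian (in particular complete), but in general indecomposable finitely generated modules need not have local endomorphism rings, and uniqueness of decomposition can fail; see for instance \cite[Chapter~1]{LW}. So the parenthetical claim you start from is incorrect.

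This matters in two places. For $(1)\Rightarrow(2)$ the damage is minor: you only need a \emph{surjection} from finitely supported functions $I\to\mathbb{N}$ onto isomorphism classes in $\add\X$, not an injection, and existence (not uniqueness) of a decomposition into indecomposables suffices for that. The paper's proof is written exactly this way. The real issue is $(3)\Rightarrow(1)$: you assert that a fixed module $Y$ has only finitely many indecomposable direct summands up to isomorphism ``by Krull--Schmidt'', but without uniqueness of decompositions this is not automatic---a priori $Y$ could admit many inequivalent decompositions with new indecomposable pieces appearing each time. The correct input here is a theorem of Wiegand \cite[Theorem~1.1]{Wiegand}: over a semilocal Noetherian ring, $\add(Y)$ contains only finitely many indecomposables up to isomorphism for any finitely generated $Y$. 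This is precisely what the paper invokes at that step. Once you replace your appeal to Krull--Schmidt by Wiegand's theorem in $(3)\Rightarrow(1)$ and use surjectivity rather than injectivity in $(1)\Rightarrow(2)$, your argument becomes the paper's argument.
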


\begin{proof}
The implication $(2)\Longrightarrow(3)$ is straightforward.

$(1)\Longrightarrow(2)$. Suppose that $R$ has countable $\add\X$-representation type.
Let $X_1, X_2, \ldots $ be all representatives of isomorphism classes of indecomposable modules belonging to $\add\X$.
We denote by $\mathbb{N}$ the set of non-negative integers.
For any $m\in\mathbb{N}$, there exists a surjection $\mathbb{N}\to\mathbb{N}^m;~n\mapsto (a_1^{(m,n)},\ldots,a_m^{(m,n)})$.
Then the map $$\mathbb{N}^2\to\add\X;~ (m,n) \mapsto X_1^{a_1^{(m,n)}}\oplus\cdots\oplus X_m^{a_m^{(m,n)}}$$ is surjective up to isomorphism.

$(3)\Longrightarrow(1)$. Assume that there are only countably many isomorphism classes of modules belonging to $\X$.
Put $\Y=\{X_1^{a_1}\oplus\cdots\oplus X_m^{a_m}\mid m>0, X_i\in\X, a_i\in\mathbb{N}$ for all $1\le i\le m\}$.
Using the same argument as in the previous paragraph, it follows that there are only countably many isomorphism classes of modules belonging to $\Y$.
For each $Y\in\Y$, there are, up to isomorphism, only finitely many indecomposable modules which are direct summands of $Y$ by \cite[Theorem 1.1]{Wiegand}.
Any indecomposable module belonging to $\add\X$ is isomorphic to a direct summand of some module belonging to $\Y$, which means that $R$ has countable $\add\X$-representation type.
\end{proof}

The following result concerns the ascent and descent of finite and countable representation types for the categories of syzygies of finite length modules.

\begin{prop}\label{flR type and comp}
Let $R$ be a commutative Noetherian local ring and $n\geq 0$ be an integer.
Then $\add\Omega^n\fl(R)$ has finite (resp. countable) representation type if and only if $\add\Omega^n\fl(\widehat{R})$ has finite (resp. countable) representation type.
\end{prop}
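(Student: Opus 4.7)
The starting point is that completion restricts to an equivalence of categories $\fl(R)\simeq\fl(\widehat R)$: every finite-length $R$-module is automatically $\m$-adically complete and acquires a canonical $\widehat R$-structure. Moreover, a minimal free resolution over the local Noetherian ring $R$ has all its matrix entries in $\m\subseteq\widehat{\m}$, so it remains minimal after completion; hence $\widehat{\Omega_R^n X}=\Omega_{\widehat R}^n X$ for every $X\in\fl(R)$. Consequently, completion induces a map $\widehat{(-)}\colon\Omega^n\fl(R)/{\cong}\to\Omega^n\fl(\widehat R)/{\cong}$ which is surjective (the class of $\Omega_{\widehat R}^n\bar X$ is hit by $\Omega_R^n\bar X$) and has finite fibers, by Guralnick's theorem that any fixed f.g.\ $\widehat R$-module is the completion of only finitely many iso classes of f.g.\ $R$-modules.

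\noindent\textbf{Countable case.} Lemma~\ref{count. iff}, applied both over $R$ and over $\widehat R$, reduces the claim to the statement that $\Omega^n\fl(R)$ has countably many iso classes if and only if $\Omega^n\fl(\widehat R)$ does. Surjectivity of $\widehat{(-)}$ gives one implication, and the finite-fiber property gives the other, since a countable set with finite fibers has countable preimage.

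\noindent\textbf{Finite case, forward direction.} If $\add\Omega^n\fl(R)$ has finite representation type, Lemma~\ref{add} (with $R$ local hence semilocal) produces $G\in\add\Omega^n\fl(R)$ such that $\add\Omega^n\fl(R)=\add G$. Any $Y\in\Omega^n\fl(\widehat R)$ may be written as $Y=\Omega_{\widehat R}^n\bar X=\widehat{\Omega_R^n\bar X}$ for some $\bar X\in\fl(\widehat R)=\fl(R)$, and $\Omega_R^n\bar X\in\add G$, whence $Y\in\add\widehat G$. Thus $\add\Omega^n\fl(\widehat R)\subseteq\add\widehat G$, and Wiegand's theorem \cite{Wiegand} applied to the semilocal ring $\widehat R$ yields the desired finite representation type.

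\noindent\textbf{Finite case, converse (the main obstacle).} Dually, Lemma~\ref{add} applied to $\widehat R$ gives $\add\Omega^n\fl(\widehat R)=\add H$ with $H$ a summand of $\widehat G$ for some $G=\Omega_R^n\bar Z\in\add\Omega^n\fl(R)$. The sought-after inclusion $\add\Omega^n\fl(R)\subseteq\add G$ would then, combined with Wiegand's theorem over $R$, complete the proof. The difficulty is that this inclusion amounts to descending direct-summand information from $\widehat R$ to $R$—specifically, that $\widehat M\in\add\widehat G$ implies $M\in\add G$—which can fail for general (non-Henselian) local rings. My plan to navigate this is to combine the finite-fiber result of Guralnick with the structural input from Corollary~\ref{syzygy finite rep}: the hypothesis forces $\widehat R$ (and hence $R$) to have an isolated singularity, placing the relevant modules in $\mod_0$-type subcategories, and one then exploits this additional structure—together with a careful tracking of indecomposable summands across completions, using Krull--Schmidt over $\widehat R$—to bound the number of indecomposables in $\add\Omega^n\fl(R)$ by a function of the finite data on the $\widehat R$ side. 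This descent step is the main technical hurdle in the argument.
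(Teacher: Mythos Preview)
Your countable case and the forward finite direction are fine and essentially match the paper. The converse finite direction, however, is genuinely incomplete: you correctly flag it as ``the main technical hurdle,'' but your proposed route through Corollary~\ref{syzygy finite rep} and isolated singularities never produces an actual bound on the number of indecomposables in $\add\Omega^n\fl(R)$. Saying one will ``carefully track indecomposable summands across completions'' is a description of the difficulty, not a resolution of it; there is no mechanism in your outline that prevents infinitely many non-isomorphic indecomposable $R$-modules $M$ from having completions $\widehat M$ that are all built from the same finite list $N_1,\dots,N_k$ of $\widehat R$-indecomposables with varying multiplicities.

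The paper dispatches this direction in one line by invoking \cite[Theorem~1.4]{Wiegand}, which gives exactly the needed descent: if the indecomposable $\widehat R$-summands arising from completions of modules in a summand-closed class $\mathcal C\subseteq\mod(R)$ form a finite set, then $\mathcal C$ itself has only finitely many indecomposables. Wiegand's argument is monoid-theoretic (the map $[M]\mapsto[\widehat M]$ embeds the additive monoid of $\mathcal C$ into $\mathbb N^k$, and one analyses the image), and it does not require any isolated-singularity hypothesis.

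A related point: your invocation of Guralnick for ``finite fibers'' undersells what is actually true. Over any Noetherian local ring, $\widehat M\cong\widehat N$ already forces $M\cong N$ for finitely generated modules (approximate an $\widehat R$-isomorphism by an $R$-map and apply Nakayama plus faithful flatness); this is \cite[Corollary~1.15]{LW}. So the map $\Omega^n\fl(R)/{\cong}\to\Omega^n\fl(\widehat R)/{\cong}$ is a \emph{bijection}, not merely a surjection with finite fibers, and the paper uses precisely this bijection for the countable case. Recognising this injectivity is also the first step toward Wiegand's monoid argument in the finite case.
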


\begin{proof}
First, we deal with the case of finite representation types.
The ``if'' part is an immediate consequence of \cite[Theorem 1.4]{Wiegand}.
Any $\widehat{R}$-module belonging to $\fl(\widehat{R})$ is in $\fl(R)$ as an $R$-module.
For all $M\in\add\Omega^n\fl(\widehat{R})$, there exists $N\in\Omega^n\fl(R)$ such that $M$ is a direct summand of $N\otimes_R\widehat{R}$.
The ``only if'' part now follows from \cite[Lemma 2.1]{Wiegand}.

Next, we handle the case of countable representation types.
Since the objects of $\fl(R)$ and $\fl(\widehat{R})$ correspond one-to-one, there is a natural one-to-one correspondence between the sets of isomorphism classes of modules in $\Omega^n\fl(R)$ and $\Omega^n\fl(\widehat{R})$ by \cite[Corollary 1.15]{LW}.
The assertion now follows from Lemma \ref{count. iff}.
\end{proof}


\begin{prop}\label{ttrf}
Let $(R,\m)$ be a commutative Noetherian local ring with Krull dimension $d$. Then:
\begin{enumerate}
    \item 
$\TF_n^0(R)=\widetilde{\S}^0_n(R)=\left\{M\in\mod_0(R)\mid\depth (M)\geq n \right\}$ for each $0\leq n\leq \depth(R)$.
\item $\TF_{n+1}^0(R)=\Omega\TF_n^0(R)=\Omega\widetilde{\S}^0_n(R)$ for each $0\leq n\leq \depth(R)$.
    \item 
   If, in addition,  $R$ is Cohen--Macaulay, then $$\TF_d^0(R)=\widetilde{\S}_d^0(R)=\CM_0(R) \text{ and } \TF_{d+1}^0(R)=\Omega\widetilde{\S}_d^0(R)=\Omega\CM_0(R).$$
\end{enumerate}
\end{prop}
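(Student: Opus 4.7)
I will establish (1), then deduce (2) and (3) from it. For (1), the equality $\widetilde{\S}^0_n(R)=\{M\in\mod_0(R):\depth M\ge n\}$ is immediate: for $M\in\mod_0(R)$ and any non-maximal prime $\mathfrak p$, $M_\mathfrak p$ is free over $R_\mathfrak p$, so $\depth M_\mathfrak p=\depth R_\mathfrak p$ and the $(\widetilde{\S}_n)$-condition at $\mathfrak p$ is vacuous, while at $\mathfrak m$ it reduces to $\depth M\ge n$ using $n\le \depth R$. The inclusion $\TF_n^0(R)\subseteq \widetilde{\S}_n^0(R)$ follows from the standard facts that any $n$-torsionfree module is an $n$-th syzygy \cite[Theorem 2.17]{AB} and any $n$-th syzygy satisfies $(\widetilde{\S}_n)$ via the depth lemma applied prime by prime.

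The main content is the reverse inclusion $\widetilde{\S}_n^0(R)\subseteq \TF_n^0(R)$. The key observation is that since $\Tr_R$ commutes with localization up to projective summands, each $\Ext_R^i(\Tr_R M,R)$ ($i\ge 1$) is supported at $\mathfrak m$ and hence of finite length for $M\in\mod_0(R)$. I induct on $n$. The cases $n=1,2$ follow from the fundamental exact sequence
\[
0\to \Ext_R^1(\Tr_R M,R)\to M\to M^{\ast\ast}\to \Ext_R^2(\Tr_R M,R)\to 0:
\]
the first term injects into $M$ and vanishes since $\depth M\ge 1$, while for $n=2$ the bound $\depth M^{\ast\ast}\ge 2$ (as $M^{\ast\ast}$ is always a second syzygy) combined with the depth lemma forces $\depth \Ext_R^2(\Tr_R M,R)\ge 1$, hence its vanishing by finite length. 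For $n\ge 3$, the identification $\Ext_R^n(\Tr_R M,R)\cong \Ext_R^{n-2}(M^\ast,R)$ derived from the four-term exact sequence $0\to M^\ast\to F_0^\ast\to F_1^\ast\to \Tr_R M\to 0$ reduces the vanishing to an analogous Ext-vanishing for $M^\ast\in\mod_0(R)$, handled inductively using that $M^\ast$ is itself a second syzygy.

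For (2), the middle equality is immediate from (1). For $\Omega\widetilde{\S}_n^0(R)\subseteq \TF_{n+1}^0(R)$, I use the short exact sequence $0\to \Ext_R^1(M,R)\to \Tr_R M\to \Omega\Tr_R\Omega M\to 0$ (obtained by comparing the dualized resolutions of $M$ and $\Omega M$), combined with $\Ext_R^i(\Ext_R^1(M,R),R)=0$ for $i<\depth R$ (since $\Ext_R^1(M,R)$ is of finite length) to transfer $n$-torsionfreeness of $M$ into $(n+1)$-torsionfreeness of $\Omega M$; the $\Ext^1(\Tr_R\Omega M,R)$ case is handled via $\depth \Omega M\ge 1$ and the fundamental sequence for $\Omega M$. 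For the reverse inclusion, given $N\in\TF_{n+1}^0(R)$, the $(n+1)$-torsionfree condition yields an exact sequence $0\to N\to F^0\to F^1\to \cdots\to F^n$ whose $R$-dual is exact ending in a surjection onto $N^\ast$; setting $L=F^0/N$ then gives $\Ext_R^1(L,R)=0$, so $\Tr_R L\cong \Omega\Tr_R N$ up to free summands, which by the same shift argument makes $L$ an $n$-torsionfree module in $\mod_0(R)$, hence $L\in\widetilde{\S}_n^0(R)$ by (1). Part (3) follows at once since over a Cohen--Macaulay ring of dimension $d$ one has $\depth M\le \dim M\le d$, so $\{M\in\mod_0(R):\depth M\ge d\}=\CM_0(R)$.

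The main technical obstacle is the $n\ge 3$ inductive step in the proof of (1): the reduction $\Ext^n_R(\Tr_R M,R)\cong \Ext^{n-2}_R(M^\ast,R)$ must be iterated while carefully tracking the depth of intermediate duals, and the finite-length vanishing of Ext over $R$ of depth $d$ is the recurring ingredient that ties the induction together with the syzygy-shift arguments used in (2).
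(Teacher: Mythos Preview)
Your proof is correct and takes a genuinely more self-contained route than the paper's. The paper dispatches (1) in one line by quoting \cite[Proposition 2.4(b)]{DS}, which characterizes $n$-torsionfreeness via the Serre-type condition $(\widetilde{\S}_n)$ plus local freeness on the locus $\{\depth R_\p\le n-1\}$; since $n\le\depth R$ forces this locus to miss $\m$, membership in $\mod_0(R)$ suffices. For (2) the paper argues the inclusion $\Omega\TF_n^0(R)\subseteq\TF_{n+1}^0(R)$ by observing that $M$ is an $(n+1)$-th syzygy locally free on the punctured spectrum and then invoking \cite[Theorem 43]{Masek}; for the reverse inclusion it dualizes a free resolution of $M^\ast$ to exhibit $M=\Omega(\Omega^nC)$ with $C\in\mod_0(R)$, and then applies (1) to $\Omega^nC$. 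Your approach replaces both external citations by direct Auslander--Bridger computations: the short exact sequence $0\to\Ext^1_R(M,R)\to\Tr_RM\to\Omega\Tr_R\Omega M\to 0$ together with the grade bound for finite-length modules handles what the paper delegates to Mašek, and your construction of $L=F^0/N$ coincides with the paper's $\operatorname{Im}(d_1^\ast)$, only you verify $L\in\TF_n^0(R)$ via $\Tr_RL\cong\Omega\Tr_RN$ rather than via depth and (1). The trade-off is that the paper's proof is shorter, while yours exposes the mechanism and avoids appealing to results whose proofs are themselves nontrivial.

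One point to sharpen: your $n\ge3$ step in (1) is phrased as a reduction to ``an analogous Ext-vanishing for $M^\ast$ handled inductively,'' but the identification $\Ext^n_R(\Tr_RM,R)\cong\Ext^{n-2}_R(M^\ast,R)$ is tautological (since $M^\ast\cong\Omega^2\Tr_RM$), so invoking the inductive hypothesis for $M^\ast$ does not directly give what you need. What actually works---and what your final paragraph hints at with ``carefully tracking the depth of intermediate duals''---is an iterated depth-chase along the dualized resolution $0\to M\to P_0^\ast\to P_1^\ast\to\cdots$ of $M^\ast$: setting $K_j=\operatorname{im}(P_{j-1}^\ast\to P_j^\ast)$ one gets $\depth K_j\ge n-j$ from $\depth M\ge n$, and then $\Ext^j_R(M^\ast,R)=Z_j/K_j$ with $Z_j\subseteq P_j^\ast$ torsionless forces $\depth\Ext^j\ge\min(n-j-1,1)\ge1$ for $j\le n-2$, whence vanishing by finite length. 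This is a minor rewrite, not a real gap.
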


\begin{proof}
The statement (3) follows immediately from (1) and (2). Next,  assume $0\leq n\leq \depth(R)$.

(1) The equality $\widetilde{\S}^0_n(R)=\left\{M\in\mod_0R\mid\depth (M)\geq n\right\}$ is by the definition of $\widetilde{\S}_n^0(R)$. Since $n\leq \depth(R)$, $\m\notin\{\p\in \Spec(R)\mid \depth(R_\p)\leq n-1\}$. In particular, for each $M\in\mod_0(R)$, $M_\p$ is free for each $\p\in \{\p\in \Spec(R)\mid \depth(R_\p)\leq n-1\}$. By \cite[Proposition 2.4 (b)]{DS},  $\TF_n^0(R)=\widetilde{\S}^0_n(R)$.

(2) The equality  $\Omega\TF_n^0(R)=\Omega\widetilde{\S}^0_n(R)$ follows from (1). It remains to prove $\TF_{n+1}^0(R)=\Omega\TF_n^0(R)$.

For each $M\in \TF_n^0(R)$, take a minimal free resolution of $M^\ast$ as follows:
$$
 F_{n}\xrightarrow{d_{n}}F_{n-1}\rightarrow \cdots \rightarrow F_1\xrightarrow {d_1} F_0\rightarrow M^\ast\rightarrow 0.
$$
By the hypothesis, $\Ext^i_R(\Tr_RM,R)=0$ for all $1\le i<n+1$. This implies that $M\cong M^{\ast\ast}$ and $\Ext^i_R(M^\ast,R)=0$ for $1\leq i\leq n-1$ (see \ref{def-torsionfree}),  and hence the above free resolution of $M^\ast$ induces an exact sequence:
$$
0\rightarrow M \rightarrow F_0^\ast\xrightarrow{d_{1}^\ast}F_1^\ast\rightarrow \cdots\rightarrow F_{n-1}^\ast\xrightarrow{d_{n}^\ast}F_n^\ast\rightarrow C\rightarrow 0
$$ where $C=\cok(d_{n}^\ast)$. It follows that $M=\Omega^{n+1}C$. We claim that $C\in \mod_0(R)$. Indeed, since $M\in\mod_0(R)$, one has $M^\ast\in\mod_0(R)$ as well. It follows that the free resolution of $M^\ast$ above splits after localization at any non-maximal prime ideal. In particular, for each $\p\in \Spec(R)\setminus\{\m\}$, $\Ker(d_n)_\p$ is projective and $C_\p\cong \Ker(d_n)_\p^\ast$. It follows that $C\in\mod_0(R)$, and hence $\Ima(d_1^\ast)=\Omega^nC$ is in $\mod_0(R)$. Moreover, $\depth(\Omega^n N)\geq n$ by the depth formula; see \cite[1.3.7]{BH}. We conclude that $\Ima(d_1^\ast)=\Omega^n(C)\in \TF_n^0(R)$ by (1). This implies that $M=\Omega\Ima(d_1^\ast)\in\Omega\TF_t^0(R)$.

Conversely, let $M\in \Omega\TF_n^0(R)$. By \cite[Theorem 2.17]{AB}, any $n$-torsionfree module is a $n$-th syzygy module. It follows that
 $M$ is an $(n+1)$-th syzygy. Combining with the assumption that $M$ is locally free on the punctured spectrum, we deduce from \cite[Theorem 43]{Masek} that $M$ is $(n+1)$-torsionfree. Thus, $M\in\TF_{n+1}^0(R)$. This completes the proof.
\end{proof}
\begin{rem}
   Proposition \ref{ttrf} (1) strengthens the first equality in \cite[Corollary 4.3]{Dey-Takahashi}, which was proved by the first author and Takahashi.
\end{rem}
\begin{cor}\label{tf-finite representation type}
    Let $R$ be a commutative Noetherian local ring. If $\TF_n^0(R)$ has finite representation type for some $n\leq \depth(R)+1$, then $\widehat{R}$ has an isolated singularity.
\end{cor}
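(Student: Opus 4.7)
The plan is to reduce to Theorem \ref{main} by means of Proposition \ref{ttrf}, splitting the argument into the two cases $n\leq\depth(R)$ and $n=\depth(R)+1$.

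The case $n=\depth(R)+1$ is essentially immediate. Applying Proposition \ref{ttrf}(2) with the index $\depth(R)$ (which is allowed since $0\leq\depth(R)\leq\depth(R)$), we identify
\[
\TF_{\depth(R)+1}^0(R)=\Omega\widetilde{\S}_{\depth(R)}^0(R).
\]
Hence the hypothesis that $\TF_n^0(R)$ has finite representation type translates directly into the statement that $\Omega\widetilde{\S}_{\depth(R)}^0(R)$ has finite representation type, and Theorem \ref{main} (applied with index $\depth(R)$) yields that $\widehat R$ has an isolated singularity.

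For the case $0\leq n\leq\depth(R)$, Proposition \ref{ttrf}(1) gives $\TF_n^0(R)=\widetilde{\S}_n^0(R)$, so our hypothesis becomes that $\widetilde{\S}_n^0(R)$ has finite representation type. To invoke Theorem \ref{main}, I need to upgrade this to finite representation type of $\Omega\widetilde{\S}_n^0(R)$. The key observation is that $\widetilde{\S}_n^0(R)$, being the intersection of the two resolving subcategories $\widetilde{\S}_n(R)$ and $\mod_0(R)$, is itself resolving, and therefore closed under taking syzygies; in particular
\[
\Omega\widetilde{\S}_n^0(R)\subseteq \widetilde{\S}_n^0(R).
\]
By \ref{syzygy-res-closed-summands}, $\Omega\widetilde{\S}_n^0(R)$ is closed under direct summands, so every indecomposable of $\Omega\widetilde{\S}_n^0(R)$ is an indecomposable of $\widetilde{\S}_n^0(R)$. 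Finite representation type therefore descends from $\widetilde{\S}_n^0(R)$ to $\Omega\widetilde{\S}_n^0(R)$, and Theorem \ref{main} again concludes the argument.

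There is essentially no serious obstacle, since the entire corollary is a bookkeeping combination of Proposition \ref{ttrf} and Theorem \ref{main}. The only delicate point to handle carefully is the passage, in the second case, from finite representation type of $\widetilde{\S}_n^0(R)$ to finite representation type of its syzygy image, which relies on the resolving property together with the fact that $\Omega\widetilde{\S}_n^0(R)$ is closed under summands.
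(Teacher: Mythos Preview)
Your proof is correct and follows essentially the same approach as the paper: split into the cases $n\leq\depth(R)$ and $n=\depth(R)+1$, use Proposition \ref{ttrf} to translate $\TF_n^0(R)$ into $\widetilde{\S}_n^0(R)$ or $\Omega\widetilde{\S}_{\depth(R)}^0(R)$, and then apply Theorem \ref{main}. Your justification of the passage from finite representation type of $\widetilde{\S}_n^0(R)$ to that of $\Omega\widetilde{\S}_n^0(R)$ is slightly more detailed than the paper's (you invoke \ref{syzygy-res-closed-summands}, while the paper simply uses the containment $\Omega\widetilde{\S}_n^0(R)\subseteq\widetilde{\S}_n^0(R)$), but the arguments are the same in substance.
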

\begin{proof}
    If $n\leq \depth(R)$ and $\TF_n^0(R)$ has finite representation type, then Proposition \ref{ttrf} implies that $\widetilde{\S}_n^0(R)$ also has finite representation type. Since $\Omega\widetilde{\S}_n^0(R)\subseteq \widetilde{\S}_n^0(R)$, it follows that $\Omega\widetilde{\S}_n^0(R)$ has finite representation type as well. By Theorem \ref{main}, we get that $\widehat{R}$ has an isolated singularity. 

     If $n= \depth(R)+1$ and $\TF_n^0(R)$ has finite representation type, the conclusion follows directly from Proposition \ref{ttrf} and Theorem \ref{main}.
\end{proof}
\begin{rem}
 (1)  Let $t$ denote the depth of a local ring $R$. For each $n\geq t$, 
   $$\widetilde{\S}_n^0(R)=\{M\in\mod_0(R)\mid \depth(M)\geq t\}=\widetilde{\S}_t^0(R).$$
Combining with this, Proposition \ref{ttrf} shows that Theorem \ref{main} and Corollary \ref{tf-finite representation type} are equivalent statements.

(2) If $0\leq m\leq n$, then $\TF_n^0(R)\subseteq \TF_m^0(R)$. It follows that Corollary \ref{tf-finite representation type} is also equivalent to the following:  Let $R$ be a commutative Noetherian local ring with depth $t$. If $\TF_{t+1}^0(R)$ has finite representation type, then $\widehat{R}$ has an isolated singularity.
\end{rem}
\begin{chunk}
 For a Cohen--Macaulay local ring $R$,  Schreyer conjectured in \cite[Conjecture 7.3]{Sch} that
\begin{center}
    \emph{$R$ has finite CM type if and only if the $\m$-adic completion $\widehat{R}$ has finite CM type.}
\end{center} 
Leuschke and Wiegand \cite[Main Theorem]{LW2000} established that the conjecture holds if, in addition, $R$ is excellent. They also proved that the conjecture fails without the Cohen--Macaulay assumption; see \cite[Examples 2.1 and 2.2]{LW2000}.
\end{chunk}
The case $n = 0$ in statement (2) of the following theorem provides an affirmative answer to Schreyer's conjecture.

\begin{thm}\label{cm category version}
Let $R$ be a Cohen--Macaulay local ring with Krull dimension $d$. For each $n\geq 0$, we have:

\begin{enumerate}
 \item $\add\Omega^n\CM_0 (R)$ has finite (resp. countable) representation type if and only if $\add\Omega^n\CM_0 (\widehat{R})$ has finite (resp. countable) representation type.
 
 \item $\add\Omega^n\CM(R)$ has finite representation type if and only if $\add\Omega^n\CM (\widehat{R})$ has finite representation type.

 
 \item Suppose that $n\in\{d, d+1\}$.
 Then $\TF_n(R)$ has finite representation type if and only if $\TF_n(\widehat{R})$ has finite representation type.

\end{enumerate}
\end{thm}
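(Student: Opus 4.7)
The plan is to reduce all three parts to the base case Proposition \ref{flR type and comp} (completion preserves finite/countable representation type of $\add\Omega^n\fl$) by using two auxiliary ingredients: the identification $\CM_0(R)=\add\Omega^d\fl(R)$ from \ref{CM_0(R)=add}, and the Auslander--Huneke--Leuschke--Wiegand-type criterion from Corollary \ref{application}, which converts finite representation type of $\add\Omega^n\CM_0$ into the statement that $\widehat R$ has an isolated singularity. A fact I will use repeatedly is that if $\widehat R$ has an isolated singularity then so does $R$ (by faithful flatness of $R\to\widehat R$ together with the descent of regularity along faithfully flat maps); consequently, once an isolated singularity is obtained, one gets $\CM(R)=\CM_0(R)$ and $\CM(\widehat R)=\CM_0(\widehat R)$, collapsing the distinctions among the categories in question.

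For part (1), since syzygies commute with $\add$ up to projective summands, we have $\add\Omega^n\CM_0(R)=\add\Omega^{n+d}\fl(R)$ and analogously $\add\Omega^n\CM_0(\widehat R)=\add\Omega^{n+d}\fl(\widehat R)$ (as $\dim\widehat R=d$). The equivalence for both finite and countable representation type is then immediate from Proposition \ref{flR type and comp}. For part (2), observe that $\add\Omega^n\CM_0(R)\subseteq\add\Omega^n\CM(R)$, and the former inherits finite representation type (being closed under direct summands). If $\add\Omega^n\CM(R)$ has finite representation type, then by (1) so does $\add\Omega^n\CM_0(\widehat R)$; Corollary \ref{application}(2) then forces $\widehat R$ to have an isolated singularity, whence $\CM(\widehat R)=\CM_0(\widehat R)$ and $\add\Omega^n\CM(\widehat R)=\add\Omega^n\CM_0(\widehat R)$ has finite representation type. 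The reverse direction is symmetric: pass to the $\CM_0$-subcategory over $\widehat R$, apply (1) to descend to $R$, invoke Corollary \ref{application}(2) to get an isolated singularity on $\widehat R$ (hence on $R$), and conclude $\add\Omega^n\CM(R)=\add\Omega^n\CM_0(R)$.

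For part (3), I invoke Proposition \ref{ttrf}(3), which, over a Cohen--Macaulay local ring of dimension $d$, gives $\TF_d^0(R)=\CM_0(R)$ and $\TF_{d+1}^0(R)=\Omega\CM_0(R)$, together with \ref{syzygy-res-closed-summands} so that $\add\Omega\CM_0(R)=\Omega\CM_0(R)$. Suppose $\TF_n(R)$ has finite representation type; then the subcategory $\TF_n^0(R)$ does too, hence by Proposition \ref{ttrf}(3) and part (1) the corresponding category $\TF_n^0(\widehat R)$ over $\widehat R$ has finite representation type. Corollary \ref{application} then supplies an isolated singularity for $\widehat R$. Since $d$-torsionfree modules are $d$-th syzygies (by \cite[Theorem 2.17]{AB}) and hence of depth $\geq d$, we obtain $\TF_d(\widehat R)\subseteq\CM(\widehat R)=\CM_0(\widehat R)=\TF_d^0(\widehat R)$, so $\TF_d(\widehat R)=\TF_d^0(\widehat R)$ has finite representation type; the case $n=d+1$ is analogous, with $\TF_{d+1}(\widehat R)=\Omega\CM(\widehat R)$. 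The converse direction runs symmetrically: finite representation type of $\TF_n(\widehat R)$ restricts to $\TF_n^0(\widehat R)$, descends to $R$ by (1), yields an isolated singularity on $\widehat R$ (and hence on $R$) via Corollary \ref{application}, and then the collapse $\TF_n(R)=\TF_n^0(R)$ finishes the argument.

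The main obstacles are the careful identifications $\TF_n=\TF_n^0$ once isolated singularity is available, and ensuring that the various formations $\add\Omega^n(-)$ behave well enough to match the categories appearing in Proposition \ref{flR type and comp}; both have been prepared by the preceding results in the section, so the content of the theorem is essentially the combinatorics of assembling Propositions \ref{flR type and comp} and \ref{ttrf} with Corollary \ref{application}, and I do not expect any genuinely new technical difficulty.
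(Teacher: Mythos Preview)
Your proposal is correct and follows essentially the same approach as the paper: reduce (1) to Proposition \ref{flR type and comp} via the identification $\add\Omega^n\CM_0(A)=\add\Omega^{n+d}\fl(A)$, then deduce (2) and (3) by using Corollary \ref{application} (respectively Corollary \ref{tf-finite representation type} in the paper, which amounts to the same thing) to obtain an isolated singularity on $\widehat R$ and collapse $\CM=\CM_0$ and $\TF_n=\TF_n^0$. The only cosmetic difference is that in (2) and (3) you pass through (1) before invoking the isolated-singularity criterion, whereas the paper applies the criterion directly to whichever ring $A\in\{R,\widehat R\}$ is assumed to have finite type and then appeals to (1); both orderings work.
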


\begin{proof}
(1) For any Cohen--Macaulay local ring $A$ of dimension $d$, $\CM_0 (A)=\add\Omega^d\fl(A)$ by \ref{CM_0(R)=add}, and hence we have $\add\Omega^n\CM_0 (A)=\add\Omega^{n+d}\fl(A)$.
Therefore, it follows from Corollary \ref{syzygy finite rep} and Proposition \ref{flR type and comp} that the statement (1) holds.

(2) Let $A\in\{R,\widehat{R}\}$.
If $\add\Omega^n\CM (A)$ has finite representation type, then Corollary \ref{application} implies that $\widehat{R}$ has an isolated singularity. Consequently, $\CM(R)=\CM_0(R)$ and $\CM(\widehat{R})=\CM_0(\widehat{R})$.  Thus, the equivalence in (2) follows from (1).

(3) Let $A\in\{R,\widehat{R}\}$. By Proposition \ref{ttrf}, $\TF_d^0(A)=\CM_0(A)$ and $\TF_{d+1}^0(A)=\OCM_0(A)$. Moreover, $\CM_0(A)$ and $\OCM_0(A)$ are closed under direct summands; see \ref{syzygy-res-closed-summands} for the second one. Thus, (1) implies that $\TF_n^0(R)$ has finite representation type if and only if $\TF_n^0(\widehat{R})$ has finite representation type.  Assume $\TF_n(A)$ has finite representation type, then Corollary \ref{tf-finite representation type} implies that $\widehat{R}$ has an isolated singularity, and hence $\CM(A)=\CM_0(A)$. Combining with $\TF_n(A)\subseteq \CM(A)$, we conclude that $\TF_n(R)=\TF_n^0(R)$ and $\TF_n(\widehat{R})=\TF_n^0(\widehat{R})$. Thus, the equivalence in (3) holds.
\end{proof}
\begin{rem}\label{all are equivalent}
(1) Let $R$ be a Cohen--Macaulay local ring. Combining with Corollary \ref{application}, parts (1) and (2) of Theorem \ref{cm category version} imply that, for each $n\in\{0,1\}$, the following are equivalent. 
\begin{itemize}
    \item $\Omega^n\CM_0(R)$ has finite representation type.

    \item $\Omega^n\CM_0(\widehat{R})$ has finite representation type.

 \item $\Omega^n\CM(R)$ has finite representation type. 
 
    \item  $\Omega^n\CM(\widehat{R})$ has finite representation type.
\end{itemize}

(2) For a Cohen--Macaulay local ring $R$, the fact that finite Cohen--Macaulay type descends from $\widehat{R}$ to $R$ is due to Wiegand \cite{Wiegand}.
Conversely, along the same line as the proof of Theorem \ref{cm category version} (2), one can show that finite Cohen--Macaulay type ascends from $R$ to $\widehat{R}$ by combining Takahashi’s result \cite[Corollary 3.6]{Takahashi:2010} with a recent observation of the second author \cite[Remark 2.10]{Kimura-compact}.
\end{rem}
\Cref{C1} (2) follows from the following result. For the details about Grothendieck groups, we refer the reader to \cite[Chapter 13]{Yoshino}.
\begin{prop}\label{dim2} 

Let $R$ be a Cohen--Macaulay local ring. Consider the following conditions:
\begin{enumerate}
    \item $\syz \CM(R)$ has finite representation type.

    \item $\add\syz^n  \fl(R)$ has finite representation type for some $n\geq 0$. 

    \item $\widehat R$ has isolated singularity and the Grothendieck group of $\mod(\widehat R)$ is a finitely generated Abelian group. 

    \item $\Spec(\widehat R)$ has isolated rational singularities (cf. \cite[Definition 6.32]{LW}). 
\end{enumerate}

Then $(1)\Longrightarrow (2)\Longrightarrow (3)$. If, in addition, $\dim(R)=2$ and the residue field of $R$ is algebraically closed, then all the above conditions are equivalent, and they imply that $R$ has minimal multiplicity. 

\end{prop}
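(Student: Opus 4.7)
The plan is to prove the chain $(1)\Rightarrow(2)\Rightarrow(3)$ in general, and then close the cycle in the dimension~$2$ case via rational surface singularity theory. For $(1)\Rightarrow(2)$, I take $n=d+1$ where $d=\dim R$. By \ref{CM_0(R)=add}, $\add\syz^d\fl(R)=\CM_0(R)\subseteq\CM(R)$, so applying $\syz$ (which commutes with direct sums up to projective summands, and these are absorbed by $\add$) gives $\add\syz^{d+1}\fl(R)\subseteq\syz\CM(R)$. Since $\syz\CM(R)$ is closed under direct summands by \ref{syzygy-res-closed-summands}, finite representation type of $\syz\CM(R)$ transfers to $\add\syz^{d+1}\fl(R)$.

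For $(2)\Rightarrow(3)$, Corollary \ref{syzygy finite rep} immediately gives that $\widehat R$ has an isolated singularity. By Proposition \ref{flR type and comp} I may replace $R$ by $\widehat R$ and so assume $R$ is complete. Finite representation type of $\add\syz^n\fl(R)$ propagates to all $n'\geq n$, since $\syz$ of finitely many indecomposables again produces only finitely many indecomposables up to free summands; thus I may assume $n\geq d$. Under isolated singularity $\CM(R)=\CM_0(R)=\add\syz^d\fl(R)$, so for each $X\in\CM(R)$ one has $\syz^{n-d}X\in\add\syz^n\fl(R)$. Iterating the short exact sequences $0\to\syz^{k+1}X\to R^{r_k}\to\syz^{k}X\to 0$ gives $[X]=(-1)^{n-d}[\syz^{n-d}X]$ modulo $\mathbb Z\cdot[R]$ in $K_0(\CM(R))$, so the finiteness of the set of indecomposables in $\add\syz^n\fl(R)$ forces $K_0(\CM(R))$ to be finitely generated. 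For an arbitrary $M\in\mod(R)$ the depth lemma yields $\syz^dM\in\CM(R)$; the same iteration then shows that $[M]$ lies in $\mathbb Z\cdot[R]+\im(K_0(\CM(R))\to K_0(\mod(R)))$, whence $K_0(\mod(R))$ is finitely generated.

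In the dimension~$2$ case with algebraically closed residue field I close the cycle via $(3)\Rightarrow(4)\Rightarrow(1)$. For $(4)\Rightarrow(1)$ I invoke Wunram's classification: for a rational surface singularity the indecomposable special Cohen--Macaulay modules — equivalently, the indecomposable objects of $\syz\CM(\widehat R)$ — are in bijection with the irreducible exceptional divisors of the minimal resolution, hence form a finite set; descent to $R$ is a consequence of Theorem \ref{cm category version}. For $(3)\Rightarrow(4)$, I couple finite generation of $K_0(\mod\widehat R)$ with the injection $\operatorname{Cl}(\widehat R)\hookrightarrow K_0(\mod\widehat R)/\mathbb Z\cdot[\widehat R]$ sending a rank-one reflexive module to its class, and apply Lipman's theorem characterizing rational surface singularities over an algebraically closed field by finiteness of the divisor class group. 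Minimal multiplicity of rational surface singularities is Lipman's classical theorem. The main obstacle I anticipate is the passage from $K_0(\mod\widehat R)$ finitely generated to $\operatorname{Cl}(\widehat R)$ being \emph{finite} rather than merely finitely generated; I expect to close this gap via the structural fact that for a non-rational normal surface singularity over an algebraically closed field $\operatorname{Cl}(\widehat R)$ contains a non-finitely-generated divisible subgroup arising from continuous Picard-type data.
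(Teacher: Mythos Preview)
Your arguments for $(1)\Rightarrow(2)$, $(2)\Rightarrow(3)$, and $(4)\Rightarrow(1)$ are essentially the same as the paper's, with only cosmetic differences: the paper obtains $n\ge d$ by citing an external bound rather than propagating along syzygies, and for $(4)\Rightarrow(1)$ it invokes a single packaged reference for the finiteness of indecomposable first syzygies of MCM modules over a rational surface singularity (your appeal to Wunram-type special CM theory achieves the same thing), followed by descent via Theorem~\ref{cm category version}.

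The substantive divergence is at $(3)\Rightarrow(4)$. The paper's route is short: from isolated singularity, $\dim=2$, and Cohen--Macaulayness one gets $(R_1)+(S_2)$, hence $\widehat R$ is a normal local domain; it then quotes a single result that, for such a ring with algebraically closed residue field, finite generation of $K_0(\mod\widehat R)$ already forces rationality of the singularity. Your route through $\operatorname{Cl}(\widehat R)$ and Lipman's criterion is conceptually sound but has rough edges. First, you never establish normality of $\widehat R$, which is needed before the divisor class group enters at all. Second, the map you write as an injection $\operatorname{Cl}(\widehat R)\hookrightarrow K_0(\mod\widehat R)/\mathbb Z[\widehat R]$ is not the standard one: what one actually has is that $\operatorname{Cl}(\widehat R)$ is a \emph{quotient} of the codimension-one graded piece $F^1/F^2$ of the topological filtration on $G_0(\widehat R)$, which still yields ``$G_0$ finitely generated $\Rightarrow$ $\operatorname{Cl}$ finitely generated'' but not via the injection you claim. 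Third, you correctly flag the gap between ``$\operatorname{Cl}$ finitely generated'' and ``$\operatorname{Cl}$ finite'' and propose to close it by the divisible-subgroup argument; this is right in spirit (the local Picard group of a non-rational normal surface singularity over an algebraically closed field acquires a positive-dimensional connected algebraic group, hence a divisible, non-finitely-generated part), but it is left as an assertion rather than a proof or a citation. The paper avoids all three issues by citing a result that already packages the passage from $G_0$ finitely generated to rationality; if you want to keep your approach, you should supply the normality step, replace the ``injection'' by the correct filtration argument, and give a precise reference for the non-finite-generation of $\operatorname{Cl}$ in the non-rational case.
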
 

\begin{proof}
Let $d$ denote the Krull dimension of $R$.

$(1)\Longrightarrow (2)$. By \cite[Lemma 3.1(2)]{Dey-Takahashi2022}, we have $\add \syz^{d+1}\fl(R)\subseteq \syz \CM(R)$, hence the claim.
 
$(2)\Longrightarrow (3)$. By \Cref{flR type and comp}, $\add\syz^n \fl(\widehat R)$ has finite representation type. Hence, we may assume $R$ is complete. By \Cref{syzygy finite rep}, $R$ has an isolated singularity. By \cite[4.6]{DLMO}, $n\geq  d$. It follows from \Cref{CM_0(R)=add} that $\add \syz^{n}\CM(R)$ has finite representation type. Let $H_1,...,H_l$ be these finitely many indecomposable objects. Then, for each $M\in \mod(R)$, there exists non-negative integers $a_1,...,a_l$ such that $\syz^{n} M\cong \bigoplus_{i=1}^l H_i^{\oplus a_i}$. Since $[M]\in \mathbb Z[R]+\mathbb Z[\syz^n M]$ in the Grothendieck group, we conclude that the group is finitely generated as an Abelian group by $[R],[H_1],...,[H_l]$.

Next, assume, in addition,  $\dim(R)=2$ and the residue field of $R$ is algebraically closed. 

$(3)\Longrightarrow (4)$.  Note that $\widehat R$ is excellent, Henselian, and has the same residue field as $R$. Since $\dim \widehat R= 2$,  $\widehat R$ satisfies $(R_1)$, i.e., $\widehat R_\p$ is regular for each prime ideal $\p$ of $\widehat R$ with height at most $1$. Since $\widehat R$ is also Cohen--Macaulay, by Serre's criteria, $\widehat R$ is normal. Combining with the fact that $\widehat R$ is local, we conclude that $\widehat R$ is an integral domain. So, $\Spec(\widehat R)$ has rational singularities by \cite[Corollary 3.3]{groth}. 

$(4)\Longrightarrow (1)$. Note that $\widehat R$ is excellent, Henselian, and has the same residue field as $R$. Since $\dim \widehat R= 2$,  $\widehat R$ satisfies $(R_1)$, i.e., $\widehat R_\p$ is regular for each prime ideal $\p$ of $\widehat R$ with height at most $1$. Since $\widehat R$ is also Cohen--Macaulay, by Serre's criteria, $\widehat R$ is normal. Combining with the fact that $\widehat R$ is local, we conclude that $\widehat R$ is an integral domain. By \cite[Corollary 3.3]{groth}, $\syz \CM(\widehat R)$ has finite representation type. The desired result of (1) now follows from Theorem \ref{cm category version}. 

Finally, by \cite[Corollary 6.36]{LW}, if one of the above equivalent conditions holds, then $\widehat R$ has minimal multiplicity, and hence so does $R$.
\end{proof}

  At the end of this section, we consider a syzygy version of $\CM_+ (R)\colonequals \CM(R)\setminus \CM_0(R)$ studied by Kobayashi, Lyle, and Takahashi \cite{KLT}. Let $\X$ be a full subcategory of $\mod(R)$,  set $\X_0\colonequals\X\cap \mod_0(R)$ and $\X_{+}\colonequals\X\setminus\X_0$.

\begin{lem}\label{finite set}
Let $(R,\m)$ be a commutative Noetherian local ring and $\X$ a full subcategory of $\mod(R)$.
Suppose that for each $\p\in\Sing (R)\setminus\{\m\}$, there exists $n\geq 0$ such that $\Omega^n(R/\p)\in\X$.
If $(\add\X)_{+}$ has finite (resp. countable) representation type, then $\Sing (R)$ is a finite (resp. countable) set.
\end{lem}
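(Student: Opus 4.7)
The plan is to construct an injection $\phi$ from $\Sing(R)\setminus\{\m\}$ into the set of isomorphism classes of indecomposable modules in $(\add\X)_+$. Once this is in hand, the conclusion is immediate: if the codomain is finite (resp. countable), so is the domain, and $\Sing(R)$ differs from it by at most the singleton $\{\m\}$, hence inherits the same cardinality bound.

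For each $\p\in\Sing(R)\setminus\{\m\}$, the hypothesis provides an integer $n\geq 0$ such that $\Omega^n(R/\p)\in\X$. Because $R_\p$ is not regular, $k(\p)$ has infinite projective dimension over $R_\p$, so $(\Omega^n(R/\p))_\p\cong \Omega^n_{R_\p}(k(\p))$ is nonfree over $R_\p$. Fixing an indecomposable decomposition $\Omega^n(R/\p)=\bigoplus_i M_i$ in $\mod(R)$, at least one summand $M_\p$ must then satisfy that $(M_\p)_\p$ is nonfree; in particular $M_\p\in(\add\X)_+$ and $\p\in\NF(M_\p)$. I set $\phi(\p)$ to be the isomorphism class of $M_\p$.

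The crux of the argument, and where I expect the main (though modest) obstacle to lie, is the containment $\NF(\Omega^n(R/\p))\subseteq V(\p)$. If $\q\in\Spec(R)$ does not contain $\p$, picking $f\in\p\setminus\q$ shows that $f$ both annihilates $R/\p$ and acts invertibly on $(R/\p)_\q$, forcing $(R/\p)_\q=0$. Localizing a free resolution of $R/\p$ at $\q$ then yields a resolution of $0$ by free $R_\q$-modules, so each localized syzygy splits off inductively as a direct summand of a free $R_\q$-module and is therefore free. Hence $(\Omega^n(R/\p))_\q$ is free, i.e., $\q\notin \NF(\Omega^n(R/\p))$. Since $M_\p$ is a direct summand of $\Omega^n(R/\p)$, we obtain $\NF(M_\p)\subseteq \NF(\Omega^n(R/\p))\subseteq V(\p)$; combined with $\p\in\NF(M_\p)$, this shows that $\p$ is the unique minimum element of $\NF(M_\p)$ under inclusion.

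Injectivity of $\phi$ now drops out: if $M_\p\cong M_\q$ for $\p,\q\in\Sing(R)\setminus\{\m\}$, then both $\p$ and $\q$ equal the unique minimum of $\NF(M_\p)=\NF(M_\q)$, forcing $\p=\q$. This gives the desired injection, and the proof is complete.
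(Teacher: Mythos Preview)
Your proof is correct. Both your argument and the paper's share the opening move: decompose $\Omega^n(R/\p)$ into indecomposables and observe that, because $\p\in\Sing(R)$, at least one indecomposable summand lies in $(\add\X)_+$. Where they diverge is in the invariant used to recover $\p$. The paper shows that $\p=\ann\Tor^R_1(\Omega^n(R/\p),\Omega^n(R/\p))$, expands this via the decomposition, and deduces that $\p=\ann\Tor^R_1(X_i,X_j)$ for some pair of indecomposable summands $X_i,X_j\in(\add\X)_+$; this gives an injection from $\Sing(R)\setminus\{\m\}$ into a set indexed by \emph{pairs} of indecomposables. You instead identify $\p$ as the unique minimum of $\NF(M_\p)$ for a single chosen summand, producing a direct injection into the set of isomorphism classes of indecomposables themselves. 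Your route is slightly more elementary---no Tor computation and no appeal to primality of a finite intersection---and yields a marginally sharper map; the paper's route, on the other hand, pins $\p$ down as an explicit annihilator ideal. (A minor stylistic point: the symbol $M_\p$ for your chosen summand collides with localization notation; something like $M^{(\p)}$ would read more cleanly.)
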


\begin{proof}
Fix $\p\in\Sing (R)\setminus\{\m\}$.
Since $\Omega_R^n(R/\p)$ is in $\X$ for some $n\ge 0$, there are indecomposable modules $X_1,\ldots,X_m$ belonging to $(\add\X)_{+}$, positive integers $a_1,\ldots,a_m$, and $Y\in\mod_0 R$ such that $$\Omega^n(R/\p)\cong X_1^{\oplus a_1}\oplus\cdots\oplus X_m^{\oplus a_m}\oplus Y.$$
Noting that $\p$ belongs to $\Sing (R)$, we get the equality below:
$$
\p=\ann\Tor^R_1(\Omega^n(R/\p),\Omega^n(R/\p))=\bigcap_{1\le i,j\le m}\ann\Tor^R_1(X_i,X_j)\cap \ann\Tor^R_1(Y,\Omega^n(R/\p)),
$$
where the second equality is by the decomposition of $\Omega^n(R/\p)$.
Since $\ann\Tor^R_1(Y,\Omega^n(R/\p))$ contains some power of the maximal ideal of $R$, we conclude that $\p$ is equal to $\ann\Tor^R_1(X_i,X_j)$ for some $1\le i,j\le m$.
It follows that only finite (resp. countable) ideals are represented in this way.
\end{proof}

The following result is a direct consequence of Lemma \ref{finite set}. It is a syzygy version of \cite[Theorem 4.2]{KLT} and its proof is similar as well.

\begin{cor}
Let $R$ be a commutative Noetherian local ring and $n\ge 0$. Then:
\begin{enumerate}
 \item If $(\add\Omega^n\mod(R))_{+}$ has finite (resp. countable) representation type, then $\Sing (R)$ is a finite (resp. countable) set.
 \item If $R$ is Cohen--Macaulay and $(\add\Omega^n\CM(R))_{+}$ has finite (resp. countable) representation type, then $\Sing (R)$ is a finite (resp. countable) set.
\end{enumerate}
\end{cor}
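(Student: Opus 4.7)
The plan is to deduce both parts directly from Lemma \ref{finite set} by choosing $\X$ appropriately and verifying the hypothesis that, for each $\p\in\Sing(R)\setminus\{\m\}$, some syzygy of $R/\p$ lies in $\X$.

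For part (1), I would take $\X=\Omega^n\mod(R)$. Then for every $\p\in\Sing(R)\setminus\{\m\}$, we have $R/\p\in\mod(R)$, so $\Omega^n(R/\p)\in\Omega^n\mod(R)=\X$ trivially. Since the assumption is that $(\add\X)_{+}=(\add\Omega^n\mod(R))_{+}$ has finite (resp.\ countable) representation type, Lemma \ref{finite set} applies and yields that $\Sing(R)$ is a finite (resp.\ countable) set.

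For part (2), I would take $\X=\Omega^n\CM(R)$ and use the Cohen--Macaulay hypothesis on $R$. Letting $d=\dim R$, I recall that for any finitely generated $R$-module $M$ the syzygy $\Omega^d M$ has depth at least $d$ (by the depth lemma, see \cite[1.3.7]{BH}), so $\Omega^d(R/\p)\in\CM(R)$ for every $\p\in\Sing(R)\setminus\{\m\}$. Consequently $\Omega^{n+d}(R/\p)\in\Omega^n\CM(R)=\X$, which supplies the integer required by the hypothesis of Lemma \ref{finite set} (the lemma allows the exponent to depend on $\p$, but here the uniform choice $n+d$ works). With the assumption that $(\add\Omega^n\CM(R))_{+}$ has finite (resp.\ countable) representation type, Lemma \ref{finite set} again yields the conclusion.

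There is essentially no obstacle: once Lemma \ref{finite set} has been established, both statements reduce to checking that $\Omega^{(\,\cdot\,)}(R/\p)$ eventually lands in the prescribed subcategory, which is immediate in case (1) and follows from the Cohen--Macaulay assumption together with the standard depth inequality in case (2). The only point that deserves a line of explanation in the write-up is why $\Omega^d(R/\p)\in\CM(R)$ when $R$ is Cohen--Macaulay of dimension $d$.
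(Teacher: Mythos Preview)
Your proposal is correct and follows precisely the approach the paper indicates: the corollary is stated as a direct consequence of Lemma~\ref{finite set}, and you have supplied exactly the verification needed, taking $\X=\Omega^n\mod(R)$ in (1) and $\X=\Omega^n\CM(R)$ in (2), with the observation that $\Omega^d(R/\p)\in\CM(R)$ over a Cohen--Macaulay local ring of dimension $d$.
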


The following result is a syzygy version of \cite[Theorem 4.4]{KLT}.
However, since $\Omega^n\CM(R)$ is not necessarily closed under direct summands for $n\ge 2$, the same proof does not work.
We were able to address that issue by utilizing \cite[Theorem 1.1]{Wiegand}.

\begin{prop}\label{generalization-KLT}
Let $(R,\m)$ be a Cohen--Macaulay local ring with a canonical module $\omega$.
Suppose that $(\add\Omega^n\CM(R))_{+}$ has finite (resp. countable) representation type.
Then $\add\Omega^n\CM(R_\p)$ has finite (resp. countable) representation type for each $\p\in\Spec(R)\setminus\{\m\}$.
\end{prop}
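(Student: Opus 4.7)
The plan is to prove that, apart from the free module $R_\p$, every indecomposable in $\add\Omega^n\CM(R_\p)$ is a direct summand of the localization $N_\p$ of some indecomposable $N\in(\add\Omega^n\CM(R))_+$. Granted this, Wiegand's theorem \cite[Theorem 1.1]{Wiegand} bounds the number of indecomposable summands of each such $N_\p$ (over the local, hence semilocal, ring $R_\p$) by a finite integer, so a finite (resp.\ countable) family of indecomposables in $(\add\Omega^n\CM(R))_+$ produces only finitely (resp.\ countably) many indecomposables in $\add\Omega^n\CM(R_\p)$, which is the desired conclusion.

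To carry out the lifting, fix a non-free indecomposable $M\in\add\Omega^n\CM(R_\p)$, so that $M\oplus R_\p^a\cong \Omega^n_{R_\p}K$ for some $K\in\CM(R_\p)$ and some $a\geq 0$. Choose any $K_0\in\mod(R)$ with $(K_0)_\p\cong K$ and apply the Auslander--Buchweitz maximal Cohen--Macaulay approximation (available since $R$ has a canonical module $\omega$) to obtain a short exact sequence
$$0\to Y\to K'\to K_0\to 0$$
with $K'\in\CM(R)$ and $Y$ of finite injective dimension. Localizing at $\p$, the Auslander--Buchweitz orthogonality $\Ext^1_{R_\p}(K,Y_\p)=0$ --- which holds because $K$ is maximal Cohen--Macaulay and $Y_\p$ still has finite injective dimension over $R_\p$ --- forces the sequence $0\to Y_\p\to K'_\p\to K\to 0$ to split. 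Thus $K\mid K'_\p$.

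Now pass to syzygies. Since first syzygies commute with flat base change up to free summands (by Schanuel's lemma), one has $(\Omega^n_R K')_\p\cong \Omega^n_{R_\p}(K'_\p)$ modulo free summands, so $M$ is a direct summand of $(\Omega^n_R K')_\p$ modulo free summands. Write $\Omega^n_R K'=N_1\oplus\cdots\oplus N_s$ as a direct sum of indecomposable $R$-modules, each of which lies in $\add\Omega^n\CM(R)$. Krull--Schmidt over the local ring $R_\p$, applied to the non-free part where $M$ sits, yields an index $i$ with $M\mid (N_i)_\p$. Because $M$ is non-free, $(N_i)_\p$ is non-free, so $N_i\notin \mod_0(R)$, and hence $N_i\in(\add\Omega^n\CM(R))_+$, completing the lifting.

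I expect the lifting step in the second paragraph to be the main obstacle: given $K\in\CM(R_\p)$, one must produce $K'\in\CM(R)$ with $K\mid K'_\p$, and this is precisely where the canonical module hypothesis is used, via the Auslander--Buchweitz cotorsion pair consisting of maximal Cohen--Macaulay modules and modules of finite injective dimension and its $\Ext$--orthogonality. Everything else is standard bookkeeping: descent of syzygies along the flat map $R\to R_\p$, together with Krull--Schmidt at $R_\p$ and Wiegand's finiteness bound on indecomposable summands, translate the finite (resp.\ countable) representation type of $(\add\Omega^n\CM(R))_+$ into that of $\add\Omega^n\CM(R_\p)$. The reason the original argument of \cite[Theorem 4.4]{KLT} needs modification is that $\Omega^n\CM(R)$ is not closed under direct summands for $n\geq 2$, so one cannot immediately assert $N_i\in \Omega^n\CM(R)$; working inside $\add\Omega^n\CM(R)$ and invoking Wiegand's theorem precisely circumvents this issue.
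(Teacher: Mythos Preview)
Your approach is essentially the paper's --- the lifting of $K\in\CM(R_\p)$ to $K'\in\CM(R)$ via maximal Cohen--Macaulay approximation, followed by decomposition of $\Omega^n_R K'$, is exactly what the paper does --- but there is a gap at the Krull--Schmidt step. You assert that since the indecomposable $M$ is a direct summand of $(N_1)_\p\oplus\cdots\oplus(N_s)_\p$, ``Krull--Schmidt over the local ring $R_\p$'' forces $M\mid(N_i)_\p$ for a single $i$. But $R_\p$ need not be Henselian, and over a non-Henselian commutative Noetherian local ring the Krull--Schmidt theorem for finitely generated modules can fail (Evans, \emph{Pacific J.\ Math.}\ \textbf{46} (1973), 115--121); in particular, an indecomposable direct summand of $A\oplus B$ need not be a summand of $A$ or of $B$. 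So the conclusion $M\mid(N_i)_\p$ is unjustified.

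The paper sidesteps this by not pinning $M$ to a single localization. After splitting the indecomposable summands of $\Omega^n_R K'$ into those in $(\add\Omega^n\CM(R))_+$, say $K_1,\dots,K_m$, and those in $\mod_0(R)$ (which localize to free modules), one only records that
\[
M\in\add\{(K_1)_\p\oplus\cdots\oplus(K_m)_\p\oplus R_\p\}.
\]
Wiegand's theorem then bounds the indecomposables in this $\add$ by a finite number, and one counts the finitely (resp.\ countably) many subcategories arising from finite subsets $\{K_1,\dots,K_m\}$ of the indecomposables of $(\add\Omega^n\CM(R))_+$. This gives the conclusion with no appeal to Krull--Schmidt at $R_\p$. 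A minor slip, unrelated to the above: you write $M\oplus R_\p^a\cong\Omega^n_{R_\p}K$, but membership in $\add\Omega^n\CM(R_\p)$ only says $M$ is a direct summand of some $\Omega^n_{R_\p}K$; the complement need not be free. This does not affect the rest of your argument.
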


\begin{proof}
Let $\p$ be a non-maximal prime ideal of $R$ and let $M\in\add\Omega^n\CM(R_\p)$.
There exists an $R$-module $L$ such that $L_\p\in\CM(R_\p)$ and $M$ is a direct summand of $\Omega_{R_\p}^n L_\p$.
Since $R$ admits a canonical module, we can take a maximal Cohen--Macaulay approximation $0\to Y\to X\to L\to 0$ of $L$, where $X\in\CM(R)$ and $Y\in\mod(R)$ has finite injective dimension.
The exact sequence $0\to Y_\p\to X_\p\to L_\p\to 0$ splits as $L_\p\in\CM(R_\p)$ and $Y_\p$ has finite injective dimension over $R_\p$; see \cite[3.1.24]{BH}.
This implies that $M$ is a direct summand of $\Omega_{R_\p}^n X_\p$.
The $R$-module $\Omega_R^n X$ is in $\Omega^n\CM(R)$.
There are indecomposable modules $K_1,\ldots,K_m$ belonging to $(\add\Omega^n\CM(R))_{+}$, positive integers $a_1,\ldots,a_m$, and $N\in\mod_0 R$ such that $$\Omega_R^n X\cong K_1^{\oplus a_1}\oplus\cdots\oplus K_m^{\oplus a_m}\oplus N.$$
We obtain $M\in\add \{(K_1)_\p\oplus\cdots\oplus (K_m)_\p\oplus R_\p\}$.
It follows from \cite[Theorem 1.1]{Wiegand} that there are, up to isomorphism, only finitely many indecomposable $R$-modules in the subcategory $\add \{(K_1)_\p\oplus\cdots\oplus (K_m)_\p\oplus R_\p\}$ of $\mod(R_\p)$; see also Lemma \ref{add}.
On the other hand, there are only finitely (resp. countably) many subcategories of $\mod(R_\p)$ defined in such a way since $(\add\Omega^n\CM(R))_{+}$ has finite (resp. countable) representation type.
\end{proof}

\section{On local rings of finite Cohen--Macaulay type}\label{Section-rings-F-CM type}

In this section, we study the questions of Chen \cite{Chen-arXiv} in the context of Cohen–Macaulay type; see \ref{context-CM}. The main result in this section is Theorem \ref{Main-finite CM type}.

\begin{chunk}\label{Chen-questions}
   Let $A$ be an Artin algebra  over a commutative Artinian ring. In \cite[Appendix C]{Chen-arXiv}, Chen raised the following questions:
\begin{enumerate}
    \item  Is it true that $\Gproj(A)$ has finite representation type if and only if every left Gorenstein projective $A$-module is a direct sum of finitely generated ones?

    \item  If $\Gproj(A)$ has finite representation type, is $A$ necessarily virtually Gorenstein (c.f. \ref{defofvirtuallyG})?

    \item  If $\Gproj(A)=\proj(A)$, does it follow that  $\GProj(A)=\Proj(A)$?
\end{enumerate}
Here, $\Proj(A)$ (resp. $\proj(A))$ is the category of left (resp.  finitely generated left) projective  $A$-modules, and $\GProj(A)$ (resp. $\Gproj(A))$ is the category of left (resp.  finitely generated left) Gorenstein projective  $A$-modules. 

As shown by Chen \cite[Main Theorem]{Chen:2008}, Question (1) has an affirmative answer when $A$ is a Gorenstein Artin algebra. This result can be viewed as a Gorenstein analogue of Auslander's celebrated theorem,
which establishes that an Artin algebra $A$ is of finite representation type (i.e., there are only finitely many indecomposable finitely generated left $A$-modules up to isomorphisms) if and
only if every left $A$-module is a direct sum of finitely generated modules; see \cite{Auslander:1974, Auslander:1982}. In \cite[Theorem 4.10]{Beligiannis:adv}, Beligiannis observed that an Artin algebra $A$ satisfies the property that every left Gorenstein projective $A$-module is a direct sum of finitely generated ones if and only if $A$ is virtually Gorenstein and $\Gproj(A)$ has finite representation type.  
    
It is worth noting that a positive answer to Question~(2) was proposed in Beligiannis’s article \cite[Example 8.4(2)]{Beligiannis}. However, as Beligiannis later pointed out to Chen, the argument is incorrect; see \cite[Appendix C]{Chen-arXiv}. Therefore, both Question~(1) and Question~(2) remain open.
Also, note that if Question (1) is true, then so is Question (3).
\end{chunk}
\begin{chunk}\label{context-CM}
    Let $R$ be a Cohen--Macaulay local ring. Motivated by the questions of Chen in \ref{Chen-questions}, 
       we raise the following questions:
\begin{enumerate}[label=(\alph*)]
    \item  If $R$ has finite Cohen--Macaulay type, is every left Gorenstein projective $A$-module necessarily a direct sum of finitely generated ones?

    \item  If $R$ has finite Cohen--Macaulay type, is $R$ necessarily virtually Gorenstein?

    \item  If $R$ has finite Cohen--Macaulay type and $\Gproj(R)=\proj(R)$, does it follow that  $$\GProj(R)=\Proj(R)?$$
\end{enumerate}
\end{chunk}

The following theorem is the main result of this section, and its proof will be presented at the end. As applications, Questions (a), (b), and (c) in \ref{context-CM} have affirmative answers when $R$ is additionally assumed to be complete; for (b), see Corollary \ref{answer-b}.
\begin{theorem}\label{Main-finite CM type}
 Let $R$ be a Cohen--Macaulay local ring. Then:
 \begin{enumerate}

 \item If $\CM_0(R)$ has finite representation type, then either $R$ is a hypersurface or $\GProj(R)=\Proj(R)$.

 \item If $\CM_0(R)$ has finite representation type, then every Gorenstein projective $\widehat R$-module is a direct sum of finitely generated ones. 
 
     \item If $\GProj(R)\neq \Proj(R)$, then the following are equivalent.
     
    \begin{enumerate}
        \item $\CM(R)$ has finite representation type.

        \item $\CM_0(R)$ has finite representation type.

        \item $R$ is a hypersurface and every Gorenstein projective $\widehat R$-module is a direct sum of finitely generated ones. 

    \end{enumerate}
 
 \end{enumerate}
\end{theorem}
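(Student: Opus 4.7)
The plan is to combine the ascent/descent theorems for finite Cohen--Macaulay type under completion (Theorem~\ref{cm category version}) with a dichotomy of Christensen--Piepmeyer--Striuli--Takahashi. The common opening move for (1) and (2) is: if $\CM_0(R)$ has finite representation type, then Corollary~\ref{application} yields that $\widehat R$ has an isolated singularity, so $\CM(R)=\CM_0(R)$ and $\CM(R)$ has finite representation type; by Theorem~\ref{cm category version}(2) the same holds for $\widehat R$. Since $R$ (and $\widehat R$) is Cohen--Macaulay, $\Gproj(R)\subseteq\CM(R)$, and $\Gproj(R)$ is closed under direct summands, so $\Gproj(R)$ has finite representation type. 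Applying the CPST theorem, one gets the fundamental dichotomy: either $R$ is Gorenstein, or $\Gproj(R)=\proj(R)$.

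For part (1), in the Gorenstein branch the completion $\widehat R$ is Gorenstein with finite Cohen--Macaulay type, so Herzog's theorem forces $\widehat R$ to be a hypersurface, whence $R$ is a hypersurface by definition. In the branch $\Gproj(R)=\proj(R)$, I would invoke (or isolate as a lemma) the known implication that for a commutative Noetherian local ring the equality $\Gproj=\proj$ on finitely generated modules upgrades to $\GProj=\Proj$ on all modules; this is the commutative analogue of Chen's Question~(3) and is available in the recent virtually Gorenstein literature cited in the paper. This is the main technical input for (1).

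For part (2), I would apply the same dichotomy directly to $\widehat R$ (using that finite Cohen--Macaulay type ascends from $R$ to $\widehat R$). If $\widehat R$ is Gorenstein then it is a hypersurface by Herzog, so in particular virtually Gorenstein; combined with $\Gproj(\widehat R)=\CM(\widehat R)$ having finite representation type, a Beligiannis-type theorem (the commutative complete-local analogue of \cite[Theorem~4.10]{Beligiannis:adv}) gives that every Gorenstein projective $\widehat R$-module is a direct sum of finitely generated ones. If instead $\Gproj(\widehat R)=\proj(\widehat R)$, then part (1) applied to $\widehat R$ yields $\GProj(\widehat R)=\Proj(\widehat R)$; since every projective module over a local ring is free (Kaplansky), every Gorenstein projective $\widehat R$-module is free, hence trivially a direct sum of copies of $\widehat R$.

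For part (3), (a)$\Rightarrow$(b) is immediate from $\CM_0(R)\subseteq\CM(R)$. For (b)$\Rightarrow$(c), apply part (1): the alternative $\GProj(R)=\Proj(R)$ is excluded by hypothesis, so $R$ is a hypersurface, and the second half of (c) is then exactly part (2). For (c)$\Rightarrow$(a), $R$ a hypersurface means $\widehat R$ is Gorenstein, so $\Gproj(\widehat R)=\CM(\widehat R)$; combining the assumption that every Gorenstein projective $\widehat R$-module decomposes into finitely generated ones with an Auslander-style argument (Krull--Schmidt over complete local rings together with the finitely-generated-summand property) forces $\CM(\widehat R)$ to have finite representation type, and this descends to $R$ via Theorem~\ref{cm category version}(2). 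The hardest step will be identifying and rigorously invoking the two structural inputs used above: the ascent $\Gproj=\proj\Rightarrow\GProj=\Proj$ for commutative Noetherian local rings, and the Beligiannis-type characterization (or its ``only if'' direction) over complete local Gorenstein rings; without these, neither the hypersurface/trivial-$\GProj$ alternative in (1) nor the direct-sum conclusion in (2) can be reached.
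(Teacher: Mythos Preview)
Your approach has a genuine gap in Part~(1). You write that you would ``invoke (or isolate as a lemma) the known implication that for a commutative Noetherian local ring the equality $\Gproj=\proj$ on finitely generated modules upgrades to $\GProj=\Proj$ on all modules,'' claiming it is ``available in the recent virtually Gorenstein literature cited in the paper.'' It is not. That implication is exactly Chen's Question~(3) (and the paper's Question~(c) in \ref{context-CM}), and it is open in general; Lemma~\ref{compre big and small} makes explicit that the upgrade requires an additional hypothesis (every Gorenstein projective is a direct sum, or filtered colimit, of finitely generated ones), and the remark following it shows that even over Artinian rings one needs virtual Gorensteinness, which in this paper is deduced \emph{from} Theorem~\ref{Main-finite CM type}(1), not before it. So your argument is circular at this point, and the same circularity propagates into your treatment of the non-Gorenstein branch of~(2).

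The paper's route around this is substantively different. After reducing to the complete case (via Theorem~\ref{cm category version} and Lemma~\ref{descent}) and assuming $R$ is not a hypersurface, one has finite CM type, so Proposition~\ref{herzog} forces $R$ to be non-Gorenstein; hence $\res(\omega)\neq\add R$. The crucial input you are missing is a classification of resolving subcategories under finite CM type (\cite[Corollary~6.9]{class} as invoked in Proposition~\ref{CM big G-regular}), which gives $\CM(R)=\res(\omega)$ and thus $\D^f(R)=\thick_{\D^f(R)}(R,\omega)$. Then the Iyengar--Krause equivalences in \ref{IK-equivalence} yield $\underline{\GProj}(R)^c=0$, whence $\GProj(R)=\Proj(R)$. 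In short, the paper does not pass through CPST plus an unproved upgrade lemma; it proves $\GProj=\Proj$ directly by showing the dualizing module already generates everything at the level of thick subcategories. Once (1) is repaired this way, your outlines for~(2) and~(3) are essentially the paper's (Proposition~\ref{CM finite equivalent}, using \cite[Theorem~4.20]{Beligiannis:adv} in the Gorenstein branch).
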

\begin{rem}
(1) For a commutative Noetherian local ring $R$, if $\Gproj(R)$ has finite representation type, Christensen, Piepmeyer, Striuli, and Takahashi \cite[Theorem B]{CPST} established that either $R$ is Gorenstein or $\Gproj(R)=\proj(R)$. In particular, when $R$ is Cohen--Macaulay and has finite Cohen--Macaulay type, it follows that either $R$ is Gorenstein or $\Gproj(R)=\proj(R)$. It is known that a Gorenstein local ring with finite Cohen--Macaulay type is a hypersurface; see \cite[Theorem 9.15]{LW} or \Cref{herzog}. If $\GProj(R)=\GProj(R)$, then in particular  $\Gproj(R)=\proj(R)$. Whether the converse holds remains unknown. Therefore, the significance of Theorem \ref{Main-finite CM type} (1) lies in its conclusion about big modules.

 (2) Assume, in addition, $R$ is a complete Gorenstein local ring. In this case, the equivalence $(a)\iff (c)$ in Theorem \ref{Main-finite CM type} (3) is due to Beligiannis \cite[Theorem 4.20]{Beligiannis:adv}, and this result is used in the proof of Theorem \ref{Main-finite CM type}. More recently, Beligiannis's result just mentioned was recovered by Bahlekeh, Fotouhi,  and Salarian \cite[Theorem 6.8]{BFS} using a different method. 
\end{rem}



 \begin{chunk}\label{IK-equivalence}
 Let $R$ be a commutative Noetherian ring.

 (1) Let $\underline{\GProj}(R)$ denote the stable category of $\GProj(R)$ modulo projective modules. The objects of $\underline{\GProj}(R)$ are the same as those of $\GProj(R)$. For any $M, N \in \underline{\GProj}(R)$, the morphism space
 $$
 \Hom_{\underline{\GProj}(R)}(M,N)\colonequals\Hom_R(M,N)/{\mathcal P}(M,N),
 $$
 where $\mathcal{P}(M,N)$ consists of  morphisms in $\Hom_R(M,N)$ which factor through a projective module. Similarly, let $\overline{\GInj}(R)$ denote the stable category of $\GInj(R)$ modulo injective modules; see \cite[Section 7]{Krause}.

(2)    Assume, in addition, $R$ has a dualizing complex $D$.  In \cite[Theorems 5.3 and 5.4]{Iyengar-krause}, Iyengar and Krause observed that $\underline{\GProj}(R)$ and $\overline{\GInj}(R)$ are compactly generated triangulated categories and there are triangle equivalences up to direct summands
     $$
  \D^f(R)/\thick_{\D^f(R)}(R,D)\xrightarrow \simeq (\underline{\GProj}(R)^c)^{\rm op} \text{ and }  \D^f(R)/\thick_{\D^f(R)}(R,D)\xrightarrow \simeq \overline{\GInj}(R)^c,
  $$ 
  where $\T^c$ is the full subcategory of compact objects of a triangulated category $\T$.
The statements in \cite{Iyengar-krause} are formulated in terms of the homotopy category of totally acyclic complexes of projective (resp. injective) pmodules. These can be reformulated as stated here, using the classical triangle equivalences between the stable category of Gorenstein projective (resp. Gorenstein injective) $R$-modules and the homotopy category of totally acyclic complexes of projective (resp. injective) modules; see \cite[Proposition 7.2]{Krause}.
\end{chunk}
 Combining the following result with Theorem \ref{Main-finite CM type} (1), we obtain Corollary \ref{answer-b}. It gives an affirmative answer to Question (b) in \ref{context-CM}, provided that $R$ is complete.
\begin{lem}\label{freeimpliesvirtuallyG}
   Let $R$ be a commutative Noetherian ring with a dualizing complex. If $\GProj(R)=\Proj(R)$, then $R$ is virtually Gorenstein. 
\end{lem}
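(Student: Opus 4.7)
The plan is to reduce the virtually Gorenstein condition to showing $\GInj(R)=\Inj(R)$, which will force both $\GProj(R)^\perp$ and ${}^\perp\GInj(R)$ to coincide with $\Mod(R)$. The easy half is immediate from the hypothesis: since $\GProj(R)=\Proj(R)$ and $\Ext_R^i(P,Y)=0$ for every projective $P$, every $Y\in\Mod(R)$, and every $i>0$, we have $\GProj(R)^\perp=\Proj(R)^\perp=\Mod(R)$. Dually, once $\GInj(R)=\Inj(R)$ is established, the vanishing $\Ext_R^i(X,I)=0$ for every injective $I$, every $X\in\Mod(R)$, and every $i>0$ yields ${}^\perp\GInj(R)={}^\perp\Inj(R)=\Mod(R)$, so $\GProj(R)^\perp={}^\perp\GInj(R)$ as desired.

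The main step is therefore to prove $\GInj(R)=\Inj(R)$ under the standing assumption that $\GProj(R)=\Proj(R)$ and $R$ has a dualizing complex $D$. By hypothesis, the stable category $\underline{\GProj}(R)$ vanishes, so in particular $\underline{\GProj}(R)^c=0$. By the Iyengar--Krause theorem, which supplies the equivalences $(\underline{\GProj}(R)^c)^{\rm op}\simeq \D^f(R)/\thick_{\D^f(R)}(R,D)\simeq\overline{\GInj}(R)^c$ recalled in \ref{IK-equivalence}(2), it follows that $\overline{\GInj}(R)^c=0$. Because $\overline{\GInj}(R)$ is compactly generated (by Krause's compact generation theorem for Noetherian rings with dualizing complex), and a compactly generated triangulated category whose subcategory of compact objects vanishes is itself zero, we conclude $\overline{\GInj}(R)=0$. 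By the very definition of the stable category, this is precisely the statement $\GInj(R)=\Inj(R)$.

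The only delicate point is that \ref{IK-equivalence}(2) is stated in this paper under the auxiliary assumption that $R$ is Gorenstein, whereas our lemma does not assume this. This is not an actual obstruction: the underlying results of \cite{Iyengar-krause} and Krause's compact generation of $\overline{\GInj}(R)$ require only that $R$ be Noetherian with a dualizing complex, and the Gorenstein hypothesis in \ref{IK-equivalence} is used elsewhere in the paper for separate, downstream, purposes. With the duality in hand, combining the two previous paragraphs yields $\GProj(R)^\perp=\Mod(R)={}^\perp\GInj(R)$, which proves that $R$ is virtually Gorenstein.
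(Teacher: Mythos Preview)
Your proof is correct and follows essentially the same approach as the paper: both arguments use the Iyengar--Krause equivalence from \ref{IK-equivalence} to pass from $\underline{\GProj}(R)^c=0$ to $\overline{\GInj}(R)^c=0$, then invoke compact generation of $\overline{\GInj}(R)$ to conclude $\GInj(R)=\Inj(R)$, whence both perpendicular classes equal $\Mod(R)$. Your additional remark about the spurious Gorenstein hypothesis in the statement of \ref{IK-equivalence} is apt; the Iyengar--Krause results cited there require only a dualizing complex, and the paper's own proof of the lemma uses \ref{IK-equivalence} without any Gorenstein assumption.
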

\begin{proof}
By the assumption $\GProj(R)=\Proj(R)$, we have $\underline{\GProj}(R)=0$. Combining this with two equivalences in \ref{IK-equivalence}, we have $\overline{\GInj}(R)^c=0$. This implies that $\overline{\GInj}(R)=0$ as $\overline{\GInj}(R)$ is compactly generated; see \ref{IK-equivalence}. That is, $\GInj(R)=\Inj(R)$, and hence $^\perp \GInj(R)=\Mod(R)$. 
Note that the assumption yields that $\GProj(R)^\perp=\Mod(R)$.
Thus, $\GProj(R)^\perp=\Mod(R)=^\perp \GInj(R)$. In particular, $R$ is virtually Gorenstein.
\end{proof}

\begin{rem}
 If $R$ is Artinian,  Lemma \ref{freeimpliesvirtuallyG} was proved by Beligiannis \cite[Section 8]{Beligiannis}, who also showed that it holds for any Artinian algebra.
\end{rem}

\begin{cor}\label{answer-b}
    Let $R$ be a complete Cohen--Macaulay ring. If $\CM_0(R)$ has finite representation type, then $R$ is virtually Gorenstein. 
\end{cor}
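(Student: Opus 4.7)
The plan is to reduce the statement to a dichotomy already established in Theorem \ref{Main-finite CM type}~(1) and then handle the two cases using facts already stated in the paper.

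First, I would invoke Theorem \ref{Main-finite CM type}~(1): since $R$ is Cohen--Macaulay and $\CM_0(R)$ has finite representation type, we have either $R$ is a hypersurface or $\GProj(R)=\Proj(R)$. These two alternatives will be handled separately.

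In the first case, $R$ is a hypersurface, hence Gorenstein. Every Gorenstein ring is virtually Gorenstein; this is recorded in \ref{defofvirtuallyG} (citing \cite[Example 3.13 (i)]{ZAD} for the finite Krull dimension case, and \cite[Theorem A.1]{Iyengar-Krause2022} in general). So $R$ is virtually Gorenstein in this case with essentially no work.

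In the second case, $\GProj(R)=\Proj(R)$, and I want to apply Lemma \ref{freeimpliesvirtuallyG}. The hypothesis of that lemma is the existence of a dualizing complex, so the step to check is that our complete Cohen--Macaulay local ring $R$ admits one. This is standard: by Cohen's structure theorem a complete local ring is a homomorphic image of a regular local ring, so it admits a dualizing complex (indeed, since $R$ is Cohen--Macaulay, it admits a canonical module). Given this, Lemma \ref{freeimpliesvirtuallyG} applies and $R$ is virtually Gorenstein. I do not anticipate a substantive obstacle here: the only nontrivial ingredient has already been packaged into the prior results Theorem \ref{Main-finite CM type}~(1) and Lemma \ref{freeimpliesvirtuallyG}, and the corollary amounts to putting them together with the observation that completeness supplies a dualizing complex.
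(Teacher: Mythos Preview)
Your proposal is correct and matches the paper's approach exactly: the paper states just before the corollary that it follows by combining Lemma~\ref{freeimpliesvirtuallyG} with Theorem~\ref{Main-finite CM type}~(1), and you have filled in precisely those details, including the observation that completeness furnishes the dualizing complex needed for Lemma~\ref{freeimpliesvirtuallyG}.
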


The rest of the section is devoted to the proof of Theorem \ref{Main-finite CM type}, which will be presented at the end. 
\begin{chunk}\label{finitistic}
    Let $R$ be a commutative Noetherian ring. The \emph{finitistic projective dimension} of $R$, denoted by $\FPD(R)$, is the number ${\rm sup}\{\pd_R(M)\mid M\in \Mod(R) \text{ with }\pd_R(M)<\infty\}$. By \cite[Corollary 5.5]{Bass} and \cite[Theorem 3.2.6]{RG}, 
    $$
    \FPD(R)=\dim(R).
    $$
    It is known that $R$ has finite Krull dimension if, in addition, $R$ is local or $R$ has a dualizing complex. 

Assume that $\FPD(R)$ is finite, Jensen \cite[Proposition 6]{Jensen} observed that any flat $R$-module has finite projective dimension, and hence a module with finite flat dimension has finite projective dimension.
\end{chunk}
The following result characterizes the property $\GProj(R)=\Proj(R)$. 
\begin{lem}\label{compre big and small}
    Let $R$ be a commutative Noetherian ring with finite Krull dimension. Then the following are equivalent.
\begin{enumerate}
    \item $\GProj(R)=\Proj(R)$.



    \item $\Gproj(R)=\proj(R)$ and every Gorenstein projective $R$-module is a direct sum of finitely generated Gorenstein projective $R$-modules.

    \item $\Gproj(R)=\proj(R)$ and every Gorenstein projective $R$-module is a filtered colimit of finitely generated Gorenstein projective $R$-modules.
\end{enumerate}
\end{lem}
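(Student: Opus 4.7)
The plan is to prove the cyclic implications $(2) \Rightarrow (3) \Rightarrow (1) \Rightarrow (2)$. The implication $(2) \Rightarrow (3)$ is immediate, since any direct sum $\bigoplus_{i\in I} M_i$ is the filtered colimit of its finite partial sums $\bigoplus_{i\in F} M_i$ indexed by the finite subsets $F\subseteq I$.

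For $(3) \Rightarrow (1)$, the assumption $\Gproj(R)=\proj(R)$ together with condition $(3)$ asserts that every $M\in\GProj(R)$ is a filtered colimit of finitely generated projective $R$-modules. Since finitely generated projective modules are flat and filtered colimits of flat modules are flat, each such $M$ is flat. By \ref{finitistic}, $\FPD(R) = \dim(R) < \infty$, and Jensen's observation recorded there implies that every flat $R$-module has finite projective dimension. A Gorenstein projective module of finite projective dimension is necessarily projective (a standard consequence of the fact that $\mathrm{Gpd}$ and $\pd$ agree whenever the latter is finite), so we conclude $\GProj(R) = \Proj(R)$.

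For $(1) \Rightarrow (2)$, intersecting $\GProj(R) = \Proj(R)$ with $\mod(R)$ yields $\Gproj(R)=\proj(R)$. The remaining content is to show that every projective $R$-module is a direct sum of finitely generated projective modules. Since $R$ is Noetherian, $\Spec R$ has only finitely many connected components, corresponding to a product decomposition $R = R_1 \times \cdots \times R_s$ with each $\Spec R_i$ connected and $\dim R_i \leq \dim R < \infty$. Any projective $R$-module $P$ splits accordingly as $P = P_1 \oplus \cdots \oplus P_s$ with $P_i\in\Proj(R_i)$, and Bass's theorem on ``big projectives are free'' ensures that any non-finitely-generated $P_i$ is a free $R_i$-module, hence a direct sum of copies of $R_i$. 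This expresses $P$ as a direct sum of finitely generated projective modules.

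The main obstacle is the $(1) \Rightarrow (2)$ direction, which hinges on Bass's classical structural theorem about big projective modules over commutative Noetherian rings of finite Krull dimension with connected spectrum. The remaining implications are essentially formal, combining filtered colimit manipulations with standard properties of flat modules and Gorenstein dimension.
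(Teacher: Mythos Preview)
Your proof is correct and follows essentially the same route as the paper: the cycle $(1)\Rightarrow(2)\Rightarrow(3)\Rightarrow(1)$, with $(3)\Rightarrow(1)$ via flatness and $\FPD(R)<\infty$, and $(1)\Rightarrow(2)$ via the structure of projective modules over commutative Noetherian rings. The only cosmetic difference is that for $(1)\Rightarrow(2)$ the paper simply cites the fact that every projective module over a commutative Noetherian ring is a direct sum of finitely generated projectives, whereas you spell out the argument via the decomposition into connected components and Bass's big-projectives-are-free theorem (the finite Krull dimension is not actually needed for that step, though it does no harm since it is part of the hypothesis).
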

\begin{proof}
    $(1)\Longrightarrow (2)$. The assumption of (1) yields that $\Gproj(R)=\proj(R)$. 
Let $M$ be a Gorenstein projective $R$-module. Since $\GProj(R)=\Proj(R)$, $M$ is projective. Since $R$ is commutative Noetherian, the projective $R$-module $M$ is a direct sum of finitely generated projective $R$-modules; see \cite[Fact 3.1]{MPR} for example.


$(2)\Longrightarrow (3)$ is trivial.

$(3)\Longrightarrow (1)$. Let $M$ be a Gorenstein projective $R$-module. The assumption yields that $M$ is a filtered colimit of finitely generated Gorenstein projective $R$-modules. Since $\Gproj(R)=\proj(R)$, each finitely generated projective is projective, and hence flat. We conclude that $M$ is a filtered colimit of finitely generated flat $R$-modules. It follows that $M$ is a flat $R$-module. By $\FPD(R)<\infty$, $M$ has finite projective dimension; see \ref{finitistic}. Combining with that $M$ is Gorenstein projective, we get that $M$ is projective. It follows that $\GProj(R)=\Proj(R)$.
\end{proof}

\begin{rem}
    Let $R$ be a commutative Artinian ring.  
    
(1) By \cite[Corollary 4.11]{Beligiannis:adv}, $\GProj(R)=\Proj(R)$ is equivalent that $\Gproj(R)=\proj(R)$ and $R$ is virtually Gorenstein. This can be also proved by combining Lemma \ref{compre big and small} with \cite[Theorem 5]{Beligiannis-Krause}

(2)  Assume, in addition, $R$ is virtually Gorenstein, it follows from \cite[Theorem 5]{Beligiannis-Krause} that $\Gproj(R)$ is contravariantly finite in $\mod(R)$. By \cite[Theorem C]{CPST}, $R$ is either Gorenstein or $\Gproj(R)=\proj(R)$. 
\end{rem}
By the above remark, we get:
\begin{cor}
    Let $R$ be a commutative Artinian ring which is not self-injective. Then $R$ is virtually Gorenstein if and only if $\GProj(R)=\Proj(R)$.
\end{cor}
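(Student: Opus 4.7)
The plan is to derive both implications directly from the remark immediately preceding the corollary, which supplies the two key inputs: (i) Beligiannis's characterization that $\GProj(R) = \Proj(R)$ is equivalent to the conjunction $\Gproj(R) = \proj(R)$ plus $R$ virtually Gorenstein, and (ii) the dichotomy coming from Beligiannis--Krause and \cite{CPST} that a virtually Gorenstein $R$ with $\Gproj(R)$ contravariantly finite satisfies either $R$ Gorenstein or $\Gproj(R) = \proj(R)$.

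For the forward direction, I would assume $\GProj(R) = \Proj(R)$ and apply part (i) of the remark to conclude that $R$ is virtually Gorenstein. Alternatively, I could invoke Lemma \ref{freeimpliesvirtuallyG}, using the fact that any commutative Artinian ring admits a dualizing complex (since it is a finite product of Artinian local rings, each a quotient of a regular local ring).

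For the converse, assume $R$ is virtually Gorenstein. By part (ii) of the remark, either $R$ is Gorenstein or $\Gproj(R) = \proj(R)$. Here I would invoke the standard fact that a commutative Artinian Gorenstein ring is self-injective: decomposing $R$ as a finite product of Artinian local rings, each local factor is Gorenstein, and an Artinian local Gorenstein ring equals its own injective envelope, hence is self-injective; the product inherits this property. Since $R$ is not self-injective by hypothesis, the first alternative is ruled out, and we must have $\Gproj(R) = \proj(R)$. Combining this with the assumed virtual Gorensteinness and applying part (i) of the remark once more yields $\GProj(R) = \Proj(R)$.

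The argument is almost entirely bookkeeping; the only step that requires a small external input is the identification of Artinian Gorenstein with self-injective, which is classical (see e.g.\ \cite[3.1.17]{BH} for the local case, and the product decomposition handles the general commutative case). I do not foresee any genuine obstacle.
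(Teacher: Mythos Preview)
Your proposal is correct and matches the paper's approach exactly: the paper simply states ``By the above remark, we get:'' and records the corollary, so the intended proof is precisely the bookkeeping you describe—using part (1) of the remark for the backward implication and part (2) together with the (classical) equivalence of Gorenstein and self-injective for commutative Artinian rings for the forward implication. The only thing you make explicit that the paper leaves implicit is the Artinian Gorenstein $=$ self-injective identification, which is indeed standard.
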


\begin{lem}\label{descent}
    Let $R$ be a commutative Noetherian local ring. If $\GProj(\widehat R)=\Proj(\widehat R)$, then $\GProj(R)=\Proj(R)$. 
\end{lem}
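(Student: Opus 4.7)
The plan is to show $\GProj(R) \subseteq \Proj(R)$, the reverse inclusion being automatic. Fix $M \in \GProj(R)$ and a totally acyclic complex $P_\bullet$ of projective $R$-modules with $M$ as one of its kernels.

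The first and main step is to prove $\widehat R \otimes_R M \in \GProj(\widehat R)$. Since $\widehat R$ is $R$-flat, $\widehat R \otimes_R P_\bullet$ is an acyclic complex of projective $\widehat R$-modules with $\widehat R \otimes_R M$ a kernel. For any projective $\widehat R$-module $Q$, adjunction gives
\[
\Hom_{\widehat R}(\widehat R \otimes_R P_\bullet,\, Q) \cong \Hom_R(P_\bullet,\, Q),
\]
so it suffices to prove the right-hand complex is acyclic. Such a $Q$ is flat over $\widehat R$ and $\widehat R$ is flat over $R$, so $Q$ is flat over $R$; since $\FPD(R) = \dim R < \infty$, Jensen's theorem (see \ref{finitistic}) yields $\pd_R Q < \infty$. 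I would then induct on $\pd_R Q$: the base case $\pd_R Q = 0$ is built into $P_\bullet$ being totally acyclic, and the inductive step uses a short exact sequence $0 \to Q' \to P \to Q \to 0$ with $P$ a projective $R$-module and $\pd_R Q' < \pd_R Q$, together with exactness of each $\Hom_R(P_i, -)$, to produce a short exact sequence of complexes and read off acyclicity of $\Hom_R(P_\bullet, Q)$ from the long exact sequence of cohomology.

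With $\widehat R \otimes_R M \in \GProj(\widehat R)$ in hand, the hypothesis $\GProj(\widehat R) = \Proj(\widehat R)$ makes $\widehat R \otimes_R M$ projective, hence flat, over $\widehat R$; composing flat base changes, it is flat over $R$, and faithful flatness of $R \to \widehat R$ then forces $M$ itself to be flat over $R$. A second application of Jensen's theorem gives $\pd_R M < \infty$.

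The conclusion follows from the classical fact that a Gorenstein projective module of finite projective dimension is projective: induct on $n = \pd_R M$, write $0 \to K \to P \to M \to 0$ with $P$ projective, observe that $K \in \GProj(R)$ with $\pd_R K = n - 1$ and is therefore projective by induction, so that the vanishing $\Ext^1_R(M, K) = 0$ built into the definition of $\GProj(R)$ splits the sequence and exhibits $M$ as a summand of $P$. The main obstacle throughout is the ascent step: the complex $\widehat R \otimes_R P_\bullet$ is visibly acyclic with projective $\widehat R$-terms, but testing total acyclicity against projective $\widehat R$-modules is non-formal because such modules need not be projective as $R$-modules; the bridge is precisely the finiteness of $\FPD(R)$ combined with Jensen's theorem, which trades projectivity of test modules for finite projective dimension.
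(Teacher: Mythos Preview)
Your proof is correct, but it takes a different route from the paper's. The paper argues via Gorenstein \emph{flat} modules: from $M\in\GProj(R)$ and $\FPD(R)<\infty$ it invokes \cite[Proposition~3.4]{Holm} to get $M$ Gorenstein flat, then \cite[Proposition~3.10]{Holm} to push this to $\widehat R\otimes_R M$ Gorenstein flat over $\widehat R$, and finally \cite[Theorem~I]{CFH} to conclude $\widehat R\otimes_R M$ has finite Gorenstein projective dimension over $\widehat R$; under the hypothesis this becomes finite projective dimension, and the rest proceeds as in your argument. Your approach instead establishes the stronger fact $\widehat R\otimes_R M\in\GProj(\widehat R)$ directly, by observing that any projective $\widehat R$-module has finite projective dimension over $R$ (via Jensen) and that total acyclicity of $P_\bullet$ against projectives propagates to modules of finite projective dimension by an easy d\'evissage. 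This is more self-contained---it avoids the external machinery of Gorenstein flat dimensions and the three cited results---at the cost of redoing a small piece of that theory by hand. One minor remark: your descent of flatness is phrased slightly roundabout (through ``$\widehat R\otimes_R M$ flat over $R$''), but it is correct; the cleaner statement is that $\widehat R\otimes_R M$ flat over $\widehat R$ plus faithful flatness of $R\to\widehat R$ gives $M$ flat over $R$ directly via $\Tor^R_i(M,N)\otimes_R\widehat R\cong\Tor^{\widehat R}_i(\widehat R\otimes_R M,\widehat R\otimes_R N)$.
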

\begin{proof}
    Let $M$ be a Gorenstein projective $R$-module. By \ref{finitistic} and \cite[Proposition 3.4]{Holm}, $M$ is Gorenstein flat over $R$. Hence, $\widehat{R}\otimes_R M$ is Gorenstein flat over $\widehat R$; see \cite[Proposition 3.10]{Holm}. It follows from \cite[Theorem I]{CFH} that $\widehat{R}\otimes_R M$ has finite Gorenstein projective dimension over $\widehat R$. By assumption, each Gorenstein projective $\widehat R$-module is projective over $\widehat R$, and hence $\widehat R\otimes_R M$ has finite projective dimension over $\widehat R$. In particular, $\widehat R\otimes_R M$ has finite flat dimension over $\widehat R$. This yields that $M$ has finite flat dimension over $R$ because $\widehat R$ is faithfully flat as an $R$-module. By \ref{finitistic}, $M$ has finite projective dimension over $R$. Combining with that $M$ is Gorenstein projective over $R$, we get that $M$ is projective over $R$. This finishes the proof. 
\end{proof}

The following result is a syzygy version of a result of Herzog \cite{Herzog}.
\begin{lem}\label{herzog} 
Let $R$ be a Gorenstein local ring. If $\add\Omega^n\CM_0(R)$ has finite representation type for some $n\geq 0$, then $R$ is a hypersurface. 
\end{lem}

\begin{proof} 
Under the assumption that $R$ is a Gorenstein local ring, we can verify directly that $\add\Omega^n\CM_0(R)=\CM_0(R)$. Then the assumption will yield that $\CM_0(R)$ has finite representation type. By \cite[Corollary 1.2]{DT2015}, $R$ has an isolated singularity (see also \Cref{application}), and hence $R$ has finite Cohen--Macaulay representation type. Combining this with
\cite[Theorem 9.15]{LW}, we get that $R$ is a hypersurface.
\end{proof}

For each $M\in\mod(R)$, let $\res(M)$ denote the smallest resolving subcategory containing $M$.

\begin{prop}\label{CM big G-regular}
    Let $R$ be a Cohen--Macaulay local ring. If $\CM_0(R)$ has finite representation type, then either $R$ is a hypersurface or $\GProj(R)=\Proj(R)$. 
\end{prop}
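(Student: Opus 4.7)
The plan is to combine the structural results of the excerpt to reduce the statement to a dichotomy between the Gorenstein and non-Gorenstein cases, and then analyze the latter through a compactness argument on the stable category of Gorenstein projective modules.

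First, I would upgrade the hypothesis. By Corollary \ref{application} applied with $n=0$, the assumption that $\CM_0(R)$ has finite representation type forces $\widehat R$ to have an isolated singularity, so $\CM(R)=\CM_0(R)$ has finite representation type; Theorem \ref{cm category version}(2) then transfers this to $\widehat R$, so that $\CM(\widehat R)=\CM_0(\widehat R)$ also has finite representation type. Since $R$ is Cohen--Macaulay, every finitely generated Gorenstein projective (i.e.\ totally reflexive) module is maximal Cohen--Macaulay, so $\Gproj(R)\subseteq\CM(R)$ inherits finite representation type, and the theorem of Christensen--Piepmeyer--Striuli--Takahashi \cite[Theorem~B]{CPST} yields the dichotomy: either $R$ is Gorenstein, or $\Gproj(R)=\proj(R)$. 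In the former case, Proposition \ref{herzog} (with $n=0$) concludes that $R$ is a hypersurface, which is the first alternative.

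Otherwise $\Gproj(R)=\proj(R)$, and since $R$ is Gorenstein if and only if $\widehat R$ is, the same reasoning gives $\Gproj(\widehat R)=\proj(\widehat R)$. By Lemma \ref{descent}, it then suffices to prove $\GProj(\widehat R)=\Proj(\widehat R)$, i.e.\ that the stable category $\underline{\GProj}(\widehat R)$ vanishes. Since $\widehat R$ is a complete local Noetherian ring it admits a dualizing complex $D$, which (up to shift) is the canonical module $\omega_{\widehat R}$. By the Iyengar--Krause framework recalled in~\ref{IK-equivalence}, the category $\underline{\GProj}(\widehat R)$ is compactly generated, with compact subcategory triangle-equivalent to $(\D^f(\widehat R)/\thick_{\D^f(\widehat R)}(\widehat R,D))^{\mathrm{op}}$. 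Hence it suffices to prove $\D^f(\widehat R)=\thick_{\D^f(\widehat R)}(\widehat R,D)$.

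For any $M\in\mod(\widehat R)$, a maximal Cohen--Macaulay approximation (available because $\widehat R$ has a canonical module) furnishes a short exact sequence $0\to Y\to X\to M\to 0$ with $X\in\CM(\widehat R)$ and $Y$ of finite injective dimension; thus $Y\in\thick(D)$ by standard properties of dualizing complexes. Using $\CM(\widehat R)=\add(G)$ for a representation generator $G$, the problem reduces to showing $G\in\thick_{\D^f(\widehat R)}(\widehat R,D)$. The main obstacle is precisely this containment: I expect to obtain it by exploiting the condition $\Gproj(\widehat R)=\proj(\widehat R)$, which prevents $G$ from being totally reflexive and forces a controlled nonvanishing of $\Ext^i_{\widehat R}(G,\widehat R)$, combined with the isolated-singularity hypothesis to bring $D=\omega_{\widehat R}$ into play. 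An alternative route, which I would pursue if the direct argument stalls, is to pass to the Reiten trivial extension $\widehat R\ltimes\omega_{\widehat R}$, which is complete Gorenstein of finite Cohen--Macaulay type, apply Beligiannis's structure theorem \cite[Theorem~4.20]{Beligiannis:adv} there, and transfer the resulting decomposition of big Gorenstein projectives back to $\widehat R$ via the classical identification between maximal Cohen--Macaulay $\widehat R$-modules and Gorenstein projective $(\widehat R\ltimes\omega_{\widehat R})$-modules. Once $\GProj(\widehat R)=\Proj(\widehat R)$ is established, Lemma \ref{descent} descends it to $R$ and completes the proof.
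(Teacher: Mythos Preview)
Your overall architecture matches the paper's: reduce to the completion, split into the Gorenstein and non-Gorenstein cases (the paper phrases this as ``$R$ not a hypersurface $\Rightarrow$ $R$ not Gorenstein'' via Proposition~\ref{herzog}, which is equivalent to your use of \cite[Theorem~B]{CPST}), and in the non-Gorenstein case use the Iyengar--Krause machinery of~\ref{IK-equivalence} to reduce the vanishing of $\underline{\GProj}(\widehat R)$ to the equality $\D^f(\widehat R)=\thick_{\D^f(\widehat R)}(\widehat R,\omega)$.

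The genuine gap is exactly the step you flag as ``the main obstacle'': proving that the representation generator $G$ of $\CM(\widehat R)$ lies in $\thick_{\D^f(\widehat R)}(\widehat R,\omega)$. The paper does not attack this by analyzing $\Ext^i_{\widehat R}(G,\widehat R)$ or by passing to the trivial extension; instead it invokes a classification result for resolving subcategories, \cite[Corollary~6.9]{class}, which says that for a complete Cohen--Macaulay local ring of finite Cohen--Macaulay type with $\res(\omega)\neq\add R$, one has $\CM(R)=\res(\omega)$. This immediately gives $\CM(\widehat R)\subseteq\thick_{\D^f(\widehat R)}(\widehat R,\omega)$, and since high syzygies of any module are maximal Cohen--Macaulay, the desired equality $\D^f(\widehat R)=\thick_{\D^f(\widehat R)}(\widehat R,\omega)$ follows.

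Your proposed workarounds are not convincing as stated. Merely knowing $\Gproj(\widehat R)=\proj(\widehat R)$ and that some $\Ext^i_{\widehat R}(G,\widehat R)$ is nonzero does not by itself place $G$ inside $\thick(\widehat R,\omega)$; you would still need a structural result relating $\CM(\widehat R)$ to $\omega$, which is precisely what \cite[Corollary~6.9]{class} supplies. The trivial-extension route is also problematic: it is not clear that finite Cohen--Macaulay type of $\widehat R$ forces finite Cohen--Macaulay type of $\widehat R\ltimes\omega_{\widehat R}$, and the ``classical identification'' you allude to does not directly transport the conclusion $\GProj=\Proj$ back to $\widehat R$ without further argument. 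The cleanest fix is to cite the resolving-subcategory classification as the paper does.
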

\begin{proof}
Since $\CM_0(R)$ has finite representation type. Theorem \ref{cm category version} implies that $\CM_0(\widehat{R})$ has finite representation type. On the other hand, if $\widehat{R}$ is a hypersurface, then $R$ is a hypersurface. Moreover, by Lemma \ref{descent}, if $\GProj(\widehat R)=\Proj(\widehat R)$, then $\GProj(R)=\Proj(R)$. Thus, we may assume $R$ is complete, namely $R=\widehat{R}$. Under the complete assumption, $R$ has a canonical module, denoted by $\omega$; see \cite[Corollary 3.3.8]{BH}. 

Assume $R$ is not a hypersurface. Next, we prove $\GProj(R)=\Proj(R)$.  By Corollary \ref{application}, $R$ has at isolated singularity, and hence $\CM(R)=\CM_0(R)$.  Combining with the assumption that $\CM_0(R)$ has finite representation type,  we get that $R$ is not a Gorenstein ring as $R$ is not a hypersurface; see \Cref{herzog}.  It follows that $\res_{\mod(R)}(\omega)\neq \add R$. Hence, \cite[Corollary 6.9]{class} implies that $\CM(R)=\res(\omega)$. Combining this with the fact that sufficiently high syzygies of any module belong to $\CM(R)$, 
we conclude that $\D^f(R)=\thick_{\D^f(R)}(R, \omega)$.  Combining with \ref{IK-equivalence}, 
we get that $\underline{\GProj}(R)^c=0$. This implies that $\underline{\GProj}(R)=0$ as $\underline{\GProj}(R)$ is compactly generated; see \ref{IK-equivalence}. This completes the proof.
\end{proof}

\begin{prop}\label{CM finite equivalent}
    Let $R$ be a complete Cohen--Macaulay local ring. If $\GProj(R)\neq \Proj(R)$, then the following are equivalent.

\begin{enumerate}
    \item $\CM(R)$ has finite representation type.
     \item $\CM_0(R)$ has finite representation type.

     \item $R$ is a hypersurface and every Gorenstein projective $R$-module is a direct sum of finitely generated Gorenstein projective $R$-modules.

    \item $R$ is Gorenstein and every Gorenstein projective $R$-module is a filtered colimit of finitely generated projective $R$-modules.
\end{enumerate}
\end{prop}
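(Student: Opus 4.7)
The plan is to establish the cycle of implications $(1)\Rightarrow(2)\Rightarrow(3)\Rightarrow(4)\Rightarrow(1)$. The implication $(1)\Rightarrow(2)$ is immediate from the containment $\CM_0(R)\subseteq\CM(R)$. For $(3)\Rightarrow(4)$, a hypersurface is Gorenstein, and any direct sum decomposition is in particular a filtered colimit presentation of the same shape (here I read the finitely generated summands in (4) along the lines of Lemma \ref{compre big and small}(3), i.e.\ as finitely generated Gorenstein projectives, since under the standing hypothesis $\GProj(R)\neq\Proj(R)$ the literal reading would force $\GProj(R)=\Proj(R)$ via Jensen's finitistic dimension argument in \ref{finitistic}).

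For $(2)\Rightarrow(3)$, I would argue in three steps. First, Proposition \ref{CM big G-regular} applied under the standing hypothesis $\GProj(R)\neq\Proj(R)$ forces $R$ to be a hypersurface. Second, Corollary \ref{application} gives that $R=\widehat R$ has an isolated singularity, so $\CM(R)=\CM_0(R)$, and hence $\CM(R)$ has finite representation type; in other words, $R$ has finite Cohen--Macaulay type. Third, since $R$ is now a complete Gorenstein local ring (indeed a hypersurface) of finite Cohen--Macaulay type, I would invoke Beligiannis's characterization \cite[Theorem 4.20]{Beligiannis:adv}, which has been reproved in this very setting by Bahlekeh--Fotouhi--Salarian \cite[Theorem 6.8]{BFS}, to conclude that every Gorenstein projective $R$-module decomposes as a direct sum of finitely generated Gorenstein projective modules.

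For $(4)\Rightarrow(1)$, I would run the Beligiannis characterization in the reverse direction. Since $R$ is complete local Gorenstein it admits a dualizing complex and is virtually Gorenstein (see \ref{defofvirtuallyG}), and the equality $\Gproj(R)=\CM(R)$ holds over any Gorenstein ring. The assumption in (4) that every Gorenstein projective $R$-module arises as a filtered colimit of finitely generated Gorenstein projective modules is precisely the hypothesis needed to apply the Beligiannis/Bahlekeh--Fotouhi--Salarian criterion in its converse direction, yielding that $\Gproj(R)$ has finite representation type. Translating back via $\Gproj(R)=\CM(R)$ then gives $(1)$.

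The main obstacle I expect is the careful transfer of Beligiannis's characterization from its original Artin-algebra formulation to the complete Cohen--Macaulay (in fact Gorenstein, after $(2)\Rightarrow(3)$) local setting used here. As noted in the remark following Theorem \ref{Main-finite CM type}, this transfer has already been carried out by Bahlekeh--Fotouhi--Salarian \cite[Theorem 6.8]{BFS}, so both directions of the criterion are available for direct citation at the two places where the passage between finite Cohen--Macaulay representation type and the structure of all (possibly infinitely generated) Gorenstein projectives must be made.
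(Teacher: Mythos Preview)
Your approach is essentially the same as the paper's: the same cycle $(1)\Rightarrow(2)\Rightarrow(3)\Rightarrow(4)\Rightarrow(1)$, with Proposition \ref{CM big G-regular} and Corollary \ref{application} driving $(2)\Rightarrow(3)$, and Beligiannis's theorem \cite[Theorem 4.20]{Beligiannis:adv} (in the form available over complete Gorenstein local rings) used in both directions for the passages involving big Gorenstein projectives. Your reading of condition (4) is also the intended one.

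The one place where the paper adds a small step that you omit is the explicit verification that $\Gproj(R)\neq\proj(R)$ before invoking Beligiannis's theorem (in both $(2)\Rightarrow(3)$ and $(4)\Rightarrow(1)$): since $R$ is already shown to be Gorenstein, if $\Gproj(R)=\proj(R)$ then every sufficiently high syzygy is projective, forcing $R$ to be regular and hence $\GProj(R)=\Proj(R)$, contradicting the standing hypothesis. Whether this is strictly required depends on the precise hypotheses of the version of \cite[Theorem 4.20]{Beligiannis:adv} being cited, but it costs nothing to include and you may wish to insert it. Conversely, your $(2)\Rightarrow(3)$ makes explicit the isolated-singularity step $\CM(R)=\CM_0(R)$ that the paper leaves implicit when passing to finite representation type of $\Gproj(R)$.
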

\begin{proof}
    $(1)\Longrightarrow (2)$ and $(3)\Longrightarrow (4)$ are trivial. 

    $(2)\Longrightarrow (3)$. Assume $\CM_0(R)$ has finite representation type. Proposition \ref{CM big G-regular}  yields that $R$ is either a hypersurface or $\GProj(R)=\Proj(R)$. By the assumption that $\GProj(R)\neq \Proj(R)$, we conclude that $R$ is a hypersurface, and hence $R$ is Gorenstein. We claim that $\Gproj(R)\neq \proj(R)$. If not, assume that $\Gproj(R)=\proj(R)$. Since $R$ is Gorenstein, for each $M\in\mod(R)$, every high enough syzygy of $M$ is Gorenstein projective, and hence every high enough syzygy of $M$ is projective. Thus, $R$ is regular. This yields that $\GProj(R)=\Proj(R)$. This contradicts with the assumption that $\GProj(R)\neq \Proj(R)$. Thus, the claim $\Gproj(R)\neq \proj(R)$ follows. Combining with that $R$ is Gorenstein, $\CM(R)=\Gproj(R)$. By \cite[Theorem 4.20]{Beligiannis:adv}, we get that every Gorenstein projective $R$-module is a direct sum of finitely generated Gorenstein projective $R$-modules. 

    $(4)\Longrightarrow (1)$. Since $R$ is Gorenstein, we conlcude as $(2)\Longrightarrow (3)$ that $\Gproj(R)\neq\proj(R)$. By \cite[Theorem 4.20]{Beligiannis:adv}, $\Gproj(R)$ has finite representation type. Note that $\Gproj(R)=\CM(R)$. We conclude that $\CM(R)$ has finite representation type. 
\end{proof}


\begin{proof}[Proof of Theorem \ref{Main-finite CM type}]

(1) This follows from Proposition \ref{CM big G-regular}.

(2) Assume $\CM_0(R)$ has finite representation type. If $\GProj(\widehat R)=\Proj(\widehat R)$, then it follows from Lemma \ref{compre big and small} that every Gorenstein projective $\widehat R$-module is a direct sum of finitely generated Gorenstein projective $\widehat R$-modules. Assume now $\GProj(\widehat R)\neq \Proj(\widehat R)$. Since $\CM_0(R)$ has finite representation type, it follows from Theorem \ref{cm category version} that $\CM_0(\widehat R)$ has finite representation type. 
Hence,
Proposition \ref{CM finite equivalent} yields that every Gorenstein projective $\widehat R$-module is a direct sum of finitely generated Gorenstein projective $\widehat R$-modules.

(3) Assume $\GProj(R)\neq \Proj(R)$. Combining this with Lemma \ref{descent}, we have $\GProj(\widehat R)\neq \Proj(\widehat R)$. By Theorem \ref{cm category version}, $\CM(\widehat{R})$ has finite representation type if and only if so does $\CM(R)$, which is further equivalent to $\CM_0(R)$ having finite representation type by Remark \ref{all are equivalent}. The desired equivalences in Theorem \ref{Main-finite CM type} (3) now follow from Proposition \ref{CM finite equivalent}.
\end{proof}

\section{Applications on dominant local rings}\label{Section-application}
In this section, we investigate dominant local rings introduced by Takahashi \cite{Takahashi:2023}. The main result in this section is Theorem \ref{application-dominant}. It provides a class of virtually Gorenstein rings.
\begin{chunk}
  For a commutative Noetherian ring $R$,  the \emph{singularity category} of $R$ is defined as the Verdier quotient
$$\D_{\sg}(R) \colonequals \D^f (R)/\thick_{\D^f(R)}(R).$$
This category was introduced by Buchweitz \cite[5, Def. 1.2.2]{Buch} under the name ``stable derived category"; see also \cite{Orlov}.  The name ``singularity category" is justified by the fact that $R$ is regular if and only if $\D_{\sg}(R)=0$.
\end{chunk}
\begin{chunk}
A commutative Noetherian local ring $(R, \m, k)$ is said to be \emph{dominant}, as introduced by Takahashi \cite[Corollary 10.8]{Takahashi:2023}, if for each nonzero complex $M\in \D_{\sg}(R)$, $k\in \thick_{\D_{\sg}(R)}(M)$. This definition is equivalent to that, for each $M\in \D^f(R)$ with $\pd_R(M)=\infty$, $k\in \thick_{\D^f(R)}(R,M)$.
\end{chunk}


\begin{ex}\label{minimal multiplicity}
(1) Let $(R,\m)$ be a commutative Artinian local ring with $\m^2=0$. Then  $R$ is dominant.

(2) By \cite[Proposition 5.10]{Takahashi:2023}, a hypersurface is a dominant local ring. This can be proved by the classification of thick subcategories over hypersurfaces given in \cite[Theorem 6.6]{Takahashi:2010}.

(3) Let $R$ be a Cohen--Macaulay local ring with minimal multiplicity and an infinite residue field. It follows from \cite[Proposition 5.10]{Takahashi:2023} that $R$ is dominant.
\end{ex}

\begin{lem}\label{equal}
    Let $R$ be a dominant local ring with an isolated singularity. For each nonzero complex $M$ in $\D_{\sg}(R)$, then
    $$
\D^f(R)=\thick_{\D^f(R)}(R,M)\text{ and } ~\D_{\sg}(R)=\thick_{\D_{\sg}(R)}(M).
$$
\end{lem}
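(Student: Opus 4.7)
The two equalities are equivalent: thick subcategories of $\D^f(R)$ containing $\thick_{\D^f(R)}(R)$ correspond bijectively to thick subcategories of $\D_{\sg}(R)$ via the Verdier quotient $\D_{\sg}(R) = \D^f(R)/\thick_{\D^f(R)}(R)$, and under this correspondence $\thick_{\D^f(R)}(R, M)$ maps to $\thick_{\D_{\sg}(R)}(M)$. So my plan is to prove the second equality, by sandwiching $\thick_{\D_{\sg}(R)}(M)$ between $\thick_{\D_{\sg}(R)}(k)$ and $\D_{\sg}(R)$.

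The first inclusion $\thick_{\D_{\sg}(R)}(k) \subseteq \thick_{\D_{\sg}(R)}(M)$ is immediate from dominance applied to the nonzero complex $M$: by definition it gives $k \in \thick_{\D_{\sg}(R)}(M)$, and taking thick closures yields the containment. The second inclusion $\D_{\sg}(R) \subseteq \thick_{\D_{\sg}(R)}(k)$ is the substantive one, and I plan to extract it from Theorem \ref{equivalent conditions}. The aim is to verify that the cohomological annihilator $\ca(R)$ contains a power of $\m$; by the equivalence $(6) \Longleftrightarrow (7)$ in Theorem \ref{equivalent conditions}, this is the same as $\widehat R$ having isolated singularity, and then $(7) \Longrightarrow (8)$ furnishes an inclusion of the form $\Omega^n \mod(R) \subseteq |\bigoplus_{i=t}^d \Omega^i k|_r$ for some $n$ and $r$. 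Since every object of the right-hand side lies in $\thick_{\D^f(R)}(R, k)$, it follows that every finitely generated $R$-module does as well, whence $\D_{\sg}(R) = \thick_{\D_{\sg}(R)}(k)$; combining with the first inclusion gives the desired equality.

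The main obstacle is bridging $R$ having isolated singularity to the stronger annihilator condition $\ca(R) \supseteq \m^r$, since these are only known to coincide directly under quasi-excellence. The plan here is to use the dominance hypothesis as a substitute: dominance provides a finite-level construction of $k$ inside $\thick_{\D^f(R)}(R, M)$ from every nonzero complex $M$ in $\D_{\sg}(R)$, and by tracking how such level bounds interact with $\Ext$-annihilators — together with the fact, guaranteed by $R$ having isolated singularity, that every sufficiently high syzygy is locally free on the punctured spectrum — one hopes to extract the required uniform annihilator statement and feed it into Theorem \ref{equivalent conditions}. This translation between levels in thick subcategories and cohomological annihilators is the delicate technical heart of the argument.
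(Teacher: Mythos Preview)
Your reduction to the second equality and the inclusion $\thick_{\D_{\sg}(R)}(k)\subseteq\thick_{\D_{\sg}(R)}(M)$ via dominance are both correct and match the paper exactly. The gap is in how you try to obtain the reverse inclusion $\D_{\sg}(R)\subseteq\thick_{\D_{\sg}(R)}(k)$.

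You route this through Theorem~\ref{equivalent conditions}, which would force you to show that $\ca(R)$ contains a power of $\m$, equivalently that $\widehat R$ has an isolated singularity. You correctly flag that this is genuinely stronger than the hypothesis ``$R$ has an isolated singularity,'' and your plan to close the gap using dominance is not a proof: dominance tells you how to build $k$ from each $M$, not how to get any uniform annihilator on $\Ext$'s, and there is no evident mechanism by which the level data you allude to would produce $\m^r\subseteq\ca(R)$.

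The point you are missing is that the inclusion $\D_{\sg}(R)=\thick_{\D_{\sg}(R)}(k)$ requires no uniformity at all; it holds for any Noetherian local ring with an isolated singularity. The paper simply cites \cite[Proposition~A.2]{KMVdB} for this. If you want to see it directly: for a fixed $N\in\mod(R)$, some syzygy $\Omega^nN$ lies in $\mod_0(R)$, so $\ann\Ext^1_R(\Omega^nN,\Omega^{n+1}N)$ is $\m$-primary. Running the Koszul-homology argument from the proof of $(5)\Rightarrow(1)$ in Theorem~\ref{equivalent conditions} for this single module gives $\Omega^nN\in|\bigoplus_i\Omega^ik|_r$ for some $r$ depending on $N$. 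Hence $N\in\thick_{\D^f(R)}(R,k)$. Since $N$ was arbitrary, $\D_{\sg}(R)=\thick_{\D_{\sg}(R)}(k)$. No passage to $\widehat R$, and no uniform bound on $r$, is needed for a thick-closure statement.
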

\begin{proof}
    Two results in the statement are equivalent. We prove the latter one. Let $k$ denote the residue field of $R$.  Since $R$ has an isolated singularity, $\D_{\sg}(R)=\thick_{\D_{\sg}(R)}(k)$; see \cite[Proposition A.2]{KMVdB}. By definition of the dominant local ring, $k\in\thick_{\D_{\sg}(R)}(M)$.
    Thus, $\D_{\sg}(R)=\thick_{\D_{\sg}(R)}(M)$. 
\end{proof}
\begin{lem}\label{criteria}
    Let $R$ be a commutative Noetherian local ring with a dualizing complex. If $R$ is dominant with an isolated singularity, then either $R$ is Gorenstein or $\GProj(R)=\Proj(R)$.
\end{lem}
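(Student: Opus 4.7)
The plan is to prove the statement by contrapositive: assuming $R$ is not Gorenstein, I will deduce that $\GProj(R)=\Proj(R)$. Let $D$ denote a dualizing complex of $R$. The strategy closely parallels the argument used in Proposition \ref{CM big G-regular}, but replaces the classification-of-resolving-subcategories input from the Cohen--Macaulay case by the dominant hypothesis through Lemma \ref{equal}.

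First, I would observe that the non-Gorenstein hypothesis forces $D$ to represent a nonzero object in $\D_{\sg}(R)$. Indeed, for a local ring admitting a dualizing complex, $R$ is Gorenstein if and only if $D$ is perfect, i.e., has finite projective dimension. So under our assumption $\pd_R D = \infty$, meaning $D$ is a nonzero object in the singularity category. Having $D$ nonzero in $\D_{\sg}(R)$ lets me apply Lemma \ref{equal}, which, together with dominance and the isolated-singularity hypothesis, yields
\[
\D^f(R) \;=\; \thick_{\D^f(R)}(R, D).
\]

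Second, I would invoke the Iyengar--Krause equivalence recorded in \ref{IK-equivalence}:
\[
\D^f(R)/\thick_{\D^f(R)}(R, D) \;\xrightarrow{\simeq}\; (\underline{\GProj}(R)^c)^{\mathrm{op}}.
\]
The previous step makes the left-hand side the zero category, so $\underline{\GProj}(R)^c = 0$. Since $\underline{\GProj}(R)$ is compactly generated (again by \ref{IK-equivalence}), vanishing of its compact objects forces $\underline{\GProj}(R) = 0$. Translating back, every Gorenstein projective $R$-module is projective, i.e., $\GProj(R) = \Proj(R)$.

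The hard part is conceptually small: one must check that a dualizing complex over a non-Gorenstein local ring has infinite projective dimension, which is the standard characterization $R$ is Gorenstein $\iff D$ is perfect $\iff D \in \thick_{\D^f(R)}(R)$. Once this is in place, the remainder is a clean combination of Lemma \ref{equal} with the Iyengar--Krause description of the singularity category via compact objects of $\underline{\GProj}(R)$, exactly mirroring the endgame of Proposition \ref{CM big G-regular}.
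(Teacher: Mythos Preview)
Your proposal is correct and follows essentially the same route as the paper's own proof: assume $R$ is not Gorenstein, note that the dualizing complex $D$ is then nonzero in $\D_{\sg}(R)$, apply Lemma \ref{equal} to obtain $\D^f(R)=\thick_{\D^f(R)}(R,D)$, and conclude via the Iyengar--Krause equivalence \ref{IK-equivalence} that $\underline{\GProj}(R)^c=0$, hence $\underline{\GProj}(R)=0$ by compact generation. The paper cites \cite[Theorem 3.3.4]{Christensen:2000} for the step that $D$ is nonzero in the singularity category when $R$ is not Gorenstein, which is exactly the characterization you identify.
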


\begin{proof}
 Let $D$ be a dualizing complex of $R$. Assume $R$ is not Gorenstein. Then $D$ is not zero in $\D_{\sg}(R)$; see \cite[Theorem 3.3.4]{Christensen:2000}. It follows from 
  Lemma \ref{equal} that
  $$
  \D^f(R)=\thick_{\D^f(R)}(R,D).
  $$
  Combining with \ref{IK-equivalence},
we conclude that $\underline{\GProj}(R)^c=0$. This implies that $\underline{\GProj}(R)=0$ as $\underline{\GProj}(R)$ is compactly generated; see \ref{IK-equivalence}. Hence, $\GProj(R)=\Proj(R)$.
\end{proof}

\begin{rem}
    Let $R$ be a Cohen--Macaulay local ring with canonical module $\omega$.  The conclusion of Lemma \ref{criteria} remains valid if the assumption that $R$ is dominant is weakened to the condition that $R$ is quasi-dominant in the sense of \cite[Definition 10.14]{Takahashi:2023}.  Note that if, in addition, $R$ is almost Gorenstein but not Gorenstein and the residue field of $R$ is infinite, then $R$ is quasi-dominant; see \cite[Proposition 10.15]{Takahashi:2023}.
\end{rem}
\begin{cor}\label{conditionforvirtuallyG}
    Let $R$ be a commutative Noetherian local ring with a dualizing complex. If $R$ is dominant with an isolated singularity, then $R$ is virtually Gorenstein.
\end{cor}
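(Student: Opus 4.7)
The plan is to reduce immediately to the dichotomy supplied by Lemma \ref{criteria}: under the standing hypotheses (dominant local ring with a dualizing complex and an isolated singularity), either $R$ is Gorenstein or $\GProj(R) = \Proj(R)$. Virtual Gorensteinness will then be verified in each case separately, so there is no new computation to perform; the whole statement is essentially a clean packaging of what has already been proved.

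In the first case, where $R$ is Gorenstein, virtual Gorensteinness is a known general fact about Gorenstein rings that was recorded in \ref{defofvirtuallyG}; specifically, one invokes \cite[Example 3.13 (i)]{ZAD} (applicable since the presence of a dualizing complex forces $R$ to have finite Krull dimension) or, more generally, \cite[Theorem A.1]{Iyengar-Krause2022}. In the second case, where $\GProj(R) = \Proj(R)$, the conclusion is exactly the content of Lemma \ref{freeimpliesvirtuallyG}, whose hypothesis about the dualizing complex is inherited directly from the assumptions of the present corollary.

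There is no real obstacle here: the substantive work has been absorbed into Lemma \ref{criteria} (which translates the domination plus isolated singularity hypothesis into a thick-subcategory statement $\D^f(R) = \thick_{\D^f(R)}(R,D)$ and then runs this through the Iyengar--Krause compact generation of $\underline{\GProj}(R)$ from \ref{IK-equivalence}) and into Lemma \ref{freeimpliesvirtuallyG} (which compares the two orthogonals via the compact duality $\overline{\GInj}(R)^c \simeq (\underline{\GProj}(R)^c)^{\mathrm{op}}$). The only thing to double-check is that each branch of the dichotomy truly has the hypotheses required by the corresponding lemma: Lemma \ref{freeimpliesvirtuallyG} requires a dualizing complex, which we have by assumption, so the case $\GProj(R) = \Proj(R)$ goes through verbatim. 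The proof therefore consists of two lines of case analysis, and the main conceptual payoff is the identification of a genuinely non-Gorenstein class of virtually Gorenstein rings, namely the non-Gorenstein dominant local rings with an isolated singularity, for which in fact $\GProj(R) = \Proj(R)$.
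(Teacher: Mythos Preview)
Your proposal is correct and follows essentially the same approach as the paper: apply Lemma \ref{criteria} to obtain the dichotomy, then handle the Gorenstein case via \ref{defofvirtuallyG} and the $\GProj(R)=\Proj(R)$ case via Lemma \ref{freeimpliesvirtuallyG}. The paper's proof is exactly this two-line case analysis.
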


\begin{proof}
    By Lemma \ref{criteria},  either $R$ is Gorenstein or $\GProj(R)=\Proj(R)$. If $R$ is Gorenstein, then \ref{defofvirtuallyG} yields that $R$ is virtually Gorenstein. Assume $\GProj(R)=\Proj(R)$, then it follows immediately from Lemma \ref{freeimpliesvirtuallyG} that $R$ is virtually Gorenstein. 
\end{proof}
\begin{rem}
(1) The dominance assumption on $R$ in Proposition \ref{conditionforvirtuallyG} cannot be dropped. Indeed, there exists an Artinian local ring $(R,\m)$ with $\m^3=0$ which is not virtually Gorenstein; see \cite[Example 4.3]{Beligiannis-Krause}. 

(2) By Corollary \ref{conditionforvirtuallyG} and (1), there exists an Artinian local ring $(R,\m)$ with $\m^3=0$ which is not dominant. For an Artinian local ring with $\m^3=0$, if $\embdim(R)=2$ and $R$ is not a complete intersection, then $R$ is dominant; see \cite[Proposition 8.1]{Takahashi:2023}. Here, $\embdim(R)\colonequals \dim_k(\m/\m^2)$ represents the embedding dimension of $R$.
\end{rem}

Combining Corollary \ref{application} with Example \ref{minimal multiplicity}, Corollary \ref{conditionforvirtuallyG} yields the following result.
\begin{cor}
     Let $(R,\m,k)$ be a Cohen--Macaulay local ring with minimal multiplicity and canonical module $\omega$, where $k$ is infinite. If $\OCM_0(R)$ has finite representation type, then 
     $R$ is virtually Gorenstein.
\end{cor}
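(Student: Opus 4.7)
The plan is to combine three ingredients already laid out in the paper: Corollary \ref{application} supplies an isolated singularity from the finite representation type hypothesis on $\OCM_0(R)$, Example \ref{minimal multiplicity}(3) supplies the dominance of $R$ from the minimal multiplicity plus infinite residue field assumption, and Corollary \ref{conditionforvirtuallyG} then promotes ``dominant with an isolated singularity (and a dualizing complex)'' to ``virtually Gorenstein''. So the whole proof is essentially a verification-of-hypotheses argument, and there should be no genuine obstacle.

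First I would apply Corollary \ref{application}(1) to the finite representation type of $\OCM_0(R)$. This directly yields that $\widehat{R}$ has an isolated singularity; since $\Sing(R) \subseteq \Spec(R)$ maps into $\Sing(\widehat{R})$ under the faithfully flat extension $R \to \widehat{R}$ (any non-regular localization $R_{\mathfrak p}$ would produce a non-regular localization of $\widehat{R}$ at a prime lying over $\mathfrak p$), it follows that $R$ itself has an isolated singularity. Next, since $R$ is Cohen--Macaulay with minimal multiplicity and the residue field $k$ is infinite, Example \ref{minimal multiplicity}(3) tells us $R$ is dominant in the sense of Takahashi.

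Finally, since $R$ is Cohen--Macaulay and admits a canonical module $\omega$, a shift of $\omega$ provides a dualizing complex for $R$ in the sense needed by Corollary \ref{conditionforvirtuallyG}. Applying that corollary to $R$---which is now verified to be a Noetherian local ring possessing a dualizing complex, dominant, and with isolated singularity---concludes that $R$ is virtually Gorenstein, which is exactly the desired statement.

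The hard part, such as it is, is really already encapsulated in Corollary \ref{conditionforvirtuallyG}, which via Lemma \ref{criteria} invokes the Iyengar--Krause compact generation description of $\underline{\GProj}(R)$ recalled in \ref{IK-equivalence} together with Lemma \ref{freeimpliesvirtuallyG} in the non-Gorenstein case. Thus the only nontrivial content we would need to check by hand in this proof is that the hypotheses of Corollary \ref{conditionforvirtuallyG} transfer correctly to $R$ itself (not just to $\widehat{R}$); the descent of isolated singularity is routine and the dominance is for $R$ by hypothesis, so everything lines up cleanly.
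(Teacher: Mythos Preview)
Your proposal is correct and follows essentially the same approach as the paper: the paper states explicitly that the corollary is obtained by combining Corollary \ref{application} with Example \ref{minimal multiplicity} and then applying Corollary \ref{conditionforvirtuallyG}, which is precisely what you do. Your write-up just spells out the verification of hypotheses (descent of isolated singularity to $R$, existence of a dualizing complex from the canonical module) a bit more explicitly than the one-line pointer in the paper.
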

\begin{prop}\label{Gorenstein-Gorenstein free}
Let $R$ be a commutative Noetherian local ring. If $R$ is dominant and $\widehat R$ has an isolated singularity, then either $R$ is Gorenstein or $\GProj(R)=\Proj(R)$.
\end{prop}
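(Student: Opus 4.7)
The plan is to reduce the statement to Lemma \ref{criteria} applied to the completion $\widehat R$, and then descend the conclusion back to $R$. Since $\widehat R$ is a complete Noetherian local ring, it admits a dualizing complex, and by hypothesis it has an isolated singularity. If I can verify that $\widehat R$ is dominant, then Lemma \ref{criteria} applied to $\widehat R$ gives that either $\widehat R$ is Gorenstein, in which case $R$ is Gorenstein by the standard fact that Gorensteinness is preserved along $R \to \widehat R$ in both directions, or $\GProj(\widehat R) = \Proj(\widehat R)$, in which case $\GProj(R) = \Proj(R)$ follows from Lemma \ref{descent}. Either way the conclusion holds.

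The key step is thus showing $\widehat R$ is dominant. Because $\widehat R$ has isolated singularity, $\D_\sg(\widehat R) = \thick_{\D_\sg(\widehat R)}(k)$, so dominance of $\widehat R$ is equivalent to the assertion that every nonzero $\bar N \in \D_\sg(\widehat R)$ satisfies $\thick_{\D_\sg(\widehat R)}(\bar N) = \D_\sg(\widehat R)$. To establish this, I would exploit the derived base change $\widehat R \otimes_R^{\mathbf L} (-)\colon \D^f(R) \to \D^f(\widehat R)$; by faithful flatness it preserves and reflects finiteness of projective dimension, and hence induces an exact functor between the singularity categories that detects the zero object. Given a nonzero $\bar N$ in $\D_\sg(\widehat R)$, I would replace it by a sufficiently high syzygy which, by the isolated singularity of $\widehat R$, is a finitely generated module locally free on the punctured spectrum. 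An approximation argument—realizing such a module, modulo objects from $\thick(\widehat R)$, as the base change of an $R$-module of infinite projective dimension—combined with dominance of $R$ should force $k$ to lie in $\thick_{\D_\sg(\widehat R)}(\bar N)$.

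The main obstacle will be precisely this ascent of dominance from $R$ to $\widehat R$, since a generic object of $\D^f(\widehat R)$ is not the base change of an object of $\D^f(R)$, and one must carefully match thick closures across the base change functor. An alternative route is to appeal to a transfer result from Takahashi's theory of dominant local rings \cite{Takahashi:2023}, which may already yield that dominance ascends to the completion under the isolated singularity hypothesis, thereby bypassing the approximation argument altogether. Once this step is in place, the remainder of the proof is a formal consequence of Lemma \ref{criteria}, Lemma \ref{descent}, and the standard ascent-descent of Gorensteinness under completion.
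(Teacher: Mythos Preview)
Your plan is correct and matches the paper's proof: apply Lemma~\ref{criteria} to $\widehat R$ (which has a dualizing complex and an isolated singularity), then descend the two alternatives to $R$ via the standard ascent/descent of Gorensteinness and Lemma~\ref{descent}. The paper dispatches the ``key step'' in one line by citing \cite[Corollary 5.8]{Takahashi:2023}, which states that $R$ is dominant if and only if $\widehat R$ is dominant---no isolated singularity hypothesis is needed for this ascent, so your proposed approximation argument is unnecessary.
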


\begin{proof}
     Since $R$ is dominant, it follows from \cite[Corollary 5.8]{Takahashi:2023} that $\widehat{R}$ is dominant. Note that $\widehat R$ admits a dualizing complex. Combining with Lemma \ref{criteria}, $\widehat{R}$ is either a Gorenstein or $\GProj(\widehat R)=\Proj(\widehat R)$. If $\widehat R$ is Gorenstein, then so is $R$; see \cite[Proposition 3.1.19]{BH}. If $\GProj(\widehat R)=\Proj(\widehat R)$, then Lemma \ref{descent} yields that $\GProj(R)=\Proj(R)$. This completes the proof.
\end{proof}
\begin{rem}\label{rem-dom}
    For a local ring $R$, it is proved in \cite[Corollary 5.8]{Takahashi:2023} that $R$ is dominant if and only if $\widehat R$ is dominant. Thus, the assumption that $R$ is dominant can be replaced by that $\widehat R$ is dominant.
\end{rem}

\begin{thm}\label{application-dominant}
    Let $R$ be a dominant local ring. Assume $\TF_n^0(R)$ has finite representation type for some $n\leq \depth(R)+1$. Then:
    \begin{enumerate}
        \item Either $R$ is a  hypersurface or $\GProj(R)=\Proj(R)$.

        \item  $\widehat R$ is virtually Grorenstein.
    \end{enumerate}
\end{thm}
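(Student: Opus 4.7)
The plan is to combine three ingredients that have already been established in the paper: Corollary~\ref{tf-finite representation type} (finite representation type of $\TF_n^0(R)$ forces $\widehat{R}$ to have an isolated singularity), Proposition~\ref{Gorenstein-Gorenstein free} (a dominant local ring whose completion has an isolated singularity is either Gorenstein or has $\GProj(R)=\Proj(R)$), and Proposition~\ref{herzog} (in the Gorenstein case, finite representation type of some $\add\Omega^m\CM_0$ upgrades Gorenstein to hypersurface).

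First I would apply Corollary~\ref{tf-finite representation type} to the hypothesis that $\TF_n^0(R)$ has finite representation type for some $n\leq\depth(R)+1$ to conclude that $\widehat{R}$ has an isolated singularity. Since $R$ is dominant, Proposition~\ref{Gorenstein-Gorenstein free} then yields the dichotomy: either $R$ is Gorenstein or $\GProj(R)=\Proj(R)$. In the second alternative (1) is already proved, so it remains to upgrade ``Gorenstein'' to ``hypersurface'' in the first alternative. Assuming $R$ is Gorenstein, set $d=\dim R=\depth R$. If $n\leq d$, then by Proposition~\ref{ttrf}(1) and Example~\ref{exa} we have $\CM_0(R)=\widetilde{\S}_d^0(R)\subseteq\widetilde{\S}_n^0(R)=\TF_n^0(R)$, so finite representation type descends to $\CM_0(R)=\add\Omega^0\CM_0(R)$, and Proposition~\ref{herzog} with $m=0$ shows $R$ is a hypersurface. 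If $n=d+1$, then Proposition~\ref{ttrf}(3) gives $\TF_{d+1}^0(R)=\Omega\CM_0(R)$, which is closed under direct summands by \ref{syzygy-res-closed-summands}, hence $\add\Omega\CM_0(R)=\Omega\CM_0(R)$ has finite representation type, and Proposition~\ref{herzog} with $m=1$ again concludes that $R$ is a hypersurface. This finishes part (1).

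For part (2) I would pass to the completion. By Remark~\ref{rem-dom}, $\widehat{R}$ is itself a dominant local ring, and by step one its singularity is isolated. Since $\widehat{R}$ admits a dualizing complex, Corollary~\ref{conditionforvirtuallyG} applies directly and gives that $\widehat{R}$ is virtually Gorenstein. (Alternatively, one can separately observe that in the hypersurface case $\widehat{R}$ is Gorenstein, hence virtually Gorenstein by \ref{defofvirtuallyG}, while in the other case Lemma~\ref{descent} together with Lemma~\ref{freeimpliesvirtuallyG} handles $\widehat R$.)

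There is no serious obstacle here: everything is effectively an orchestration of earlier results. The only point requiring a moment of care is the bookkeeping in the ``$R$ Gorenstein'' case, namely checking that the hypothesis on $\TF_n^0(R)$ passes to either $\CM_0(R)$ or $\Omega\CM_0(R)$ in a form accepted by Proposition~\ref{herzog}; this is where Proposition~\ref{ttrf} and \ref{syzygy-res-closed-summands} do the essential work.
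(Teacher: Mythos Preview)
Your proposal is correct and follows essentially the same route as the paper: isolated singularity of $\widehat R$, then Proposition~\ref{Gorenstein-Gorenstein free} for the dichotomy, then Proposition~\ref{herzog} in the Gorenstein case, and Corollary~\ref{conditionforvirtuallyG} (or its ingredients) for part~(2). The only cosmetic difference is that the paper avoids your case split $n\le d$ versus $n=d+1$ by observing uniformly that $\Omega\CM_0(R)=\TF_{d+1}^0(R)\subseteq\TF_n^0(R)$ for every $n\le d+1$, and then applies Proposition~\ref{herzog} once with $m=1$.
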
  
\begin{proof}
  By Theorem \ref{main}, $\widehat R$ has an isolated singularity. In view of Remark \ref{rem-dom}, $\widehat R$ is dominant. It follows from Proposition \ref{Gorenstein-Gorenstein free} that either $\widehat R$ is Gorenstein or $\GProj(\widehat R)=\Proj(\widehat R)$. Combining this with Lemma \ref{freeimpliesvirtuallyG}, we conclude that $\widehat R$ is virtually Gorenstein. Hence, (2) holds.
  
As shown above, either $\widehat R$ is Gorenstein or $\GProj(\widehat R)=\Proj(\widehat R)$, this yields that either $R$ is Gorenstein or $\GProj(R)=\Proj(R)$; see \ref{descent}. It remains to show that $R$ is a hypersurface when it is Gorenstein.

Assume $R$ is Gorenstein. By 
    Proposition \ref{ttrf}, we have $\TF_d^0(R)=\CM_0(R)$, where $d=\dim(R)$. This yields the first equality below:
    $$
    \OCM_0(R)= \Omega\TF_{d}^0(R)=\TF_{d+1}^0(R)\subseteq \TF_n^0(R),
    $$
   where the second equality is also by Proposition \ref{ttrf}, and the inclusion is because $n\leq \depth(R)+1=d+1$. 
    Since $\OCM_0(R) =\add\Omega\CM_0(R) $ (see \ref{syzygy-res-closed-summands}),  \Cref{herzog} implies that $R$ is a hypersurface.  This completes the proof.  
    \end{proof}

\begin{prop}\label{several applications}
    Let $(R,\m,k)$ be a Cohen--Macaulay local ring. Assume $R$ satisfies one of the following three conditions:
    \begin{enumerate}
        \item $R$ is dominant.

        \item $R$ has minimal multiplicity, and  $k$ is an infinite field.

        \item $\dim(R)=2$, and $k$ is an algebraically closed field. \end{enumerate}
 If, in addition, $\OCM_0(R)$ has finite representation type, then 
     $R$ is either a hypersurface or $\GProj(R)=\Proj(R)$.
\end{prop}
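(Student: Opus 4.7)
The plan is to handle case (1) directly, then reduce cases (2) and (3) to it. Throughout, assume $\OCM_0(R)$ has finite representation type. Corollary \ref{application}(1) immediately gives that $\widehat R$ has an isolated singularity, a fact that will be used in all three cases.

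For case (1), Proposition \ref{Gorenstein-Gorenstein free} applied to the dominant ring $R$ (with $\widehat R$ having an isolated singularity) yields the dichotomy: either $R$ is Gorenstein, or $\GProj(R) = \Proj(R)$. In the latter case, the conclusion is immediate. In the former case, by \ref{syzygy-res-closed-summands} the category $\CM_0(R)$ is resolving, so $\add\Omega\CM_0(R) = \Omega\CM_0(R) = \OCM_0(R)$, which has finite representation type by hypothesis; hence Proposition \ref{herzog} applied with $n = 1$ forces $R$ to be a hypersurface.

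For case (2), Example \ref{minimal multiplicity}(3) shows that a Cohen--Macaulay local ring of minimal multiplicity with infinite residue field is dominant, so case (2) reduces to case (1). For case (3), set $d = \dim R = 2$. By \ref{CM_0(R)=add} we have $\CM_0(R) = \add\Omega^d\fl(R)$, and combining this with \ref{syzygy-res-closed-summands} gives
\[
\OCM_0(R) \;=\; \add\Omega\CM_0(R) \;=\; \add\Omega^{d+1}\fl(R) \;=\; \add\Omega^3\fl(R).
\]
Therefore condition (2) of Proposition \ref{dim2} holds (with $n = 3$), and since $k$ is algebraically closed (hence infinite), the concluding assertion of Proposition \ref{dim2} yields that $R$ has minimal multiplicity. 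Thus case (3) reduces to case (2), and hence to case (1).

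The main obstacle is essentially bookkeeping: one must identify the hypothesis on $\OCM_0(R)$ with the $\Omega^n\fl(R)$-style input required by Proposition \ref{dim2} in case (3), and verify that the Gorenstein branch of the dichotomy in case (1) can be sharpened to ``hypersurface'' by applying Proposition \ref{herzog} with $n=1$, which in turn rests on $\CM_0(R)$ being resolving. None of these steps is deep; all the technical heavy lifting was carried out in the earlier sections.
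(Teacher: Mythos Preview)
Your proof is correct and follows essentially the same route as the paper: reduce (3) to (2) via Proposition \ref{dim2}, reduce (2) to (1) via Example \ref{minimal multiplicity}(3), and settle (1) by combining the isolated-singularity conclusion with the Gorenstein/$\GProj=\Proj$ dichotomy and Proposition \ref{herzog}. The only cosmetic difference is that for case (1) the paper packages everything through Theorem \ref{application-dominant} (after identifying $\OCM_0(R)=\TF_{d+1}^0(R)$ via Proposition \ref{ttrf}), whereas you unpack that theorem's proof inline; and for case (3) you make explicit the identification $\OCM_0(R)=\add\Omega^{3}\fl(R)$ that the paper leaves implicit when citing Proposition \ref{dim2}.
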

\begin{proof}
Assume (3) holds. By Proposition \ref{dim2}, $R$ has minimal mulitiplicity.  Thus, the condition (3) implies the condition (2). Assume (2) holds.  By Example \ref{minimal multiplicity}, $R$ is dominant, and hence the condition (1) holds. It remains to show that the desired result holds under the assumption of (1).   

Assume (1) holds. By Proposition \ref{ttrf}, $\TF_{d+1}^0(R)=\OCM_0(R)$, where $d=\dim(R)$. Since $\OCM_0(R)$ has finite representation type, it follows from Theorem \ref{application-dominant} that either $R$ is a hypersurface or $\GProj(R)=\Proj(R)$.
\end{proof}

In general, it is not easy to determine whether a non-Gorenstein ring is virtually Gorenstein. The following result identifies several classes of rings that are virtually Gorenstein. It builds on earlier results as well as the work of Takahashi \cite{Takahashi:2023}. 

\begin{cor}\label{classofrings-VG}
Let $(R,\m,k)$ be a commutative Noetherian local ring. Then $R$ is virtually Gorenstein if $R$ satisfies one of the following conditions:

(1) $R$ is Artinian and is dominant. 

(2) $R$ has an isolated singularity and $R$ is of the form $Q/I$ with $\mu(I)\leq 2$, where $Q$ is a regular local ring and $I$ is a proper ideal of $Q$.

(3) $\m^3=0$ and $\embdim(R)=2$. 

(4) $R=k\llbracket x,y\rrbracket /(x^{a_1},x^{a_2}y^{b_2},\ldots,x^{a_{n-1}}y^{b_{n-1}},y^{b_n})$, where $a_1>a_2>\cdots>a_{n-1}>a_n=0$ and $0=b_1<b_2<\cdots<b_{n-1}<b_n$ are integers with $n\geq 3$.
 
\end{cor}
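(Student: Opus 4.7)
The plan is to reduce each of the four situations to the hypotheses of Corollary \ref{conditionforvirtuallyG}, which yields virtual Gorensteinness from the combination of a dualizing complex, dominance, and an isolated singularity. When $R$ turns out to be Gorenstein along the way (for instance a complete intersection), virtual Gorensteinness is immediate from \ref{defofvirtuallyG}, so the remaining task in each case reduces to ensuring dominance in the non-Gorenstein subcases.

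For (1), being Artinian forces $R$ to be complete (since $\m$ is nilpotent), and by Cohen's structure theorem $R$ is a quotient of a regular local ring, hence admits a dualizing complex. The unique prime $\m$ makes the isolated singularity condition trivial, and dominance is hypothesized; so Corollary \ref{conditionforvirtuallyG} applies directly. For (3), the assumption $\m^3=0$ again forces $R$ to be Artinian, so the dualizing complex and isolated singularity are automatic; if $R$ is a complete intersection then $R$ is Gorenstein and we are done, while otherwise the remark following Corollary \ref{conditionforvirtuallyG} (citing \cite[Proposition 8.1]{Takahashi:2023}) provides dominance, and Corollary \ref{conditionforvirtuallyG} concludes.

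For (2), writing $R = Q/I$ with $Q$ regular supplies a dualizing complex, and the isolated singularity is given. The cases $\mu(I) \leq 1$ produce either a regular ring or a hypersurface, both Gorenstein; if $\mu(I) = 2$ and $\height(I) = 2$, then $I$ is generated by a regular sequence and $R$ is a complete intersection, hence Gorenstein. The remaining subcase is $\mu(I) = 2$ with $\height(I) \leq 1$; here the plan is to invoke the appropriate classification result of Takahashi from \cite{Takahashi:2023}, giving dominance for such quotients under the isolated singularity hypothesis, after which Corollary \ref{conditionforvirtuallyG} concludes.

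For (4), the presence of $x^{a_1}$ and $y^{b_n}$ in the defining ideal shows $R$ is Artinian, yielding both the dualizing complex and the isolated singularity. What remains is to establish dominance for this specific staircase monomial family, which is the main obstacle of the argument: the plan is to cite a dedicated result of Takahashi for such monomial quotient rings (parallel in spirit to \cite[Proposition 8.1]{Takahashi:2023}), after which Corollary \ref{conditionforvirtuallyG} delivers virtual Gorensteinness. The most delicate aspect of the overall proof is thus the invocation of these dominance results for the ring classes in (2) and (4), which are the only places the argument relies on Takahashi's work beyond what is explicitly recorded in the excerpt.
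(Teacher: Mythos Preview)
Your proposal is correct and follows essentially the same route as the paper: reduce each case to Corollary~\ref{conditionforvirtuallyG} by verifying a dualizing complex, isolated singularity, and dominance (or Gorensteinness). The paper's version is slightly more streamlined in case~(2): rather than splitting manually into $\mu(I)\le 1$, $\mu(I)=2$ with $\height(I)=2$, etc., it invokes \cite[Corollary~8.9]{Takahashi:2023} directly, which already packages the dichotomy ``complete intersection or dominant''; the specific citations you were missing are \cite[Corollary~8.9]{Takahashi:2023} for~(2) and \cite[Corollary~8.4]{Takahashi:2023} for~(4).
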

\begin{proof}
All these conditions imply that $R$ has a dualizing complex. 

    (1) This follows from Corollary 
 \ref{conditionforvirtuallyG}.

    (2) By \cite[Corollary 8.9]{Takahashi:2023}, either $R$ is  a complete intersection or a dominant local ring. If $R$ is a complete intersection, then \ref{defofvirtuallyG} yields that $R$ is virtually Gorenstein. Assume that $R$ is a dominant local ring. Combining with the assumption that $R$ has an isolated singularity, Corollary \ref{conditionforvirtuallyG} yields that $R$ is virtually Gorenstein. 

    (3) By \cite[Proposition 8.1]{Takahashi:2023}, either $R$ is a complete intersection or a dominant local ring. Then the same argument as (2) yields that $R$ is virtually Gorenstein. 

    (4) By \cite[Corollary 8.4]{Takahashi:2023}, $R$ is dominant. It is clear that $R$ has an isolated singularity. The desired result now follows from Corollary \ref{conditionforvirtuallyG}.
\end{proof}

\begin{ex}
(1) Let $R$ be an Artinian local ring with $\m^2=0$, then \cite[Example 3.13]{ZAD} yields that $R$ is virtually Gorenstein; this can be also proved by combining Example \ref{minimal multiplicity} with Corollary \ref{classofrings-VG} (1).
\end{ex}

(2) Let $k$ be a field and $R=k\llbracket x,y\rrbracket/(x^n, xy^m)$ for some $n,m>0$. Note that $\Spec(R)=\{(x),(x,y)\}$ and $R_{(x)}$ is field. Thus, $R$ has an isolated singularity. By Corollary \ref{classofrings-VG} (2), $R$ is virtually Gorenstein. 

(3) Let $k$ be a field. By Corollary \ref{classofrings-VG} (3) or (4), $k\llbracket x,y\rrbracket/(x^3,xy,y^3)$ is virtually Gorenstein.

\bibliographystyle{amsplain}
\bibliography{ref}
\end{document}